\definecolor{notefontcolor}{rgb}{0.800781, 0.800781, 0.800781}
\definecolor{grey30}{rgb}{0.7,0.7,0.7}
\renewcommand{\eqref}[1]{\textup{\tagform@{\ref{#1}}}}
\numberwithin{equation}{section}
\theoremstyle{plain}
\newtheorem{theorem}{Theorem}[section]
\newtheorem{lemma}[theorem]{Lemma}
\newtheorem{proposition}[theorem]{Proposition}
\newtheorem{corollary}[theorem]{Corollary}
\theoremstyle{remark}
\newtheorem{remark}[theorem]{Remark}
\newcommand{\N}{{\mathbb{N}}}
\renewcommand{\Im}{\textup{Im }}
\newcommand{\e}{\e}
\newcommand{\R}{{\mathbb{R}}}
\renewcommand{\P}{{\mathbb P}}
\newcommand{\E}{\mathbb E}
\renewcommand {\(}{\left(}
\renewcommand {\[}{\left[}
\renewcommand {\)}{\right)}
\renewcommand {\]}{\right]}
\def\aled#1{\begin{aligned}#1\end{aligned}}
\def\be{\begin{equation}}
\def\ee{\end{equation}}
\def\bea{\begin{eqnarray}}
\def\eea{\end{eqnarray}}
\def\been#1{\begin{equation}#1\end{equation}}
\DeclareMathSymbol{\leqlant}{\mathalpha}{AMSa}{"36} 
\DeclareMathSymbol{\geqslant}{\mathalpha}{AMSa}{"3E} 
\DeclareMathSymbol{\eset}{\mathalpha}{AMSb}{"3F}     
\renewcommand{\leq}{\:\leqlant\:}                   
\renewcommand{\geq}{\:\geqslant\:}                   
\def\eqd{\stackrel{d}{=}}
\def\nn{\nonumber}
\def\a{\alpha}
\def\e{\varepsilon}
\def\d{\delta}
\def\b{\beta}
\def\l{\lambda}
\def\s{\sigma}
\def\R{\mathbb{R}}
\def\C{\mathbb{C}}
\def\k{\kappa}
\DeclareMathOperator{\atanh}{atanh}
\DeclareMathOperator{\card}{card}
\DeclareMathOperator{\ent}{Ent}
\DeclareMathOperator{\TAP}{TAP}
\DeclareMathOperator{\On}{On}
\DeclareMathOperator{\diag}{diag}
\DeclareMathOperator{\Tr}{Tr}
\DeclareMathOperator{\rank}{rank}
\DeclareMathOperator{\Var}{Var}
\DeclareMathOperator{\Span}{span}
\definecolor{light}{gray}{.9}
\definecolor{mypink1}{rgb}{0.858, 0.188, 0.478}
\definecolor{mygreen1}{rgb}{0.0, 0.8, 0.0}
\newcommand{\FTAP}{F_{\rm{TAP}}}
\newif\ifgeneralxi
\newcommand{\generalxi}[1]{%
  \ifgeneralxi
    \textcolor{blue}{#1}
  \fi
}
\title[Determinant in the Bray-Moore formula]{On the determinant in Bray-Moore's TAP complexity formula}
\author{David Belius} 
\address{Faculty of Mathematics and Computer Science, UniDistance Suisse, 3900 Brig, Switzerland}
\email{david.belius@cantab.net}
\author{Francesco Concetti}
\address{Faculty of Mathematics and Computer Science, UniDistance Suisse, 3900 Brig, Switzerland}
\email{francesco.concetti@unidistance.ch}
\author{Giuseppe Genovese}
\email{giuseppe.genovese@math.uzh.ch}
\date{\today}    
\begin{document}
\maketitle

\begin{abstract}
    In the computation of the TAP complexity, originally carried out by Bray and Moore, a fundamental step is to calculate the determinant of a random Hessian. As the replica method does not give a clear prescription, physicists debated how to perform this computation and its consequences on the TAP complexity for a long time. In this paper we prove the original Bray and Moore formula for the behaviour of the determinant at exponential scale to be correct, and compute an important prefactor coming from a small outlier in the spectrum.

\

{\bf MSC:} 60K40, 82B44, 82D30.
\end{abstract}

\section{Introduction}\label{sect:intro}
    
    The problem of determining  asymptotics of absolute values of random determinants at exponential scale is receiving increasing attention in the last years, especially in connection to the computation of the complexity of random landscapes \cite{ABAC, fyodorovHighDimensionalRandomFields2013, benmon, Mon1, jos1, jos2, ben, sellke}. In this paper we investigate a specific random determinant appearing in a seminal article of Bray and Moore \cite{braymoore}, which we call the Bray-Moore determinant. The relevance of the Bray-Moore determinant is due not only to its fundamental importance for the Sherrrington-Kirkpatrick model, but also because its study fostered the development of many ideas in the study of high-dimensional non-convex functions \cite{fyo, benfyo, ros}.

    Bray and Moore \cite{braymoore} computed the number of metastable states of the Sherrington-Kirkpatrick model using the Kac-Rice formula \cite{AW}. Metastable states are understood as the critical point of the TAP free energy (after Thouless, Palmer, Anderson \cite{TAP}), that is
    \be\label{eq:FTAP}
        F_{\TAP}(m):=\frac{\b}{\sqrt N}\sum_{i,j \in [N]} J_{ij}m_im_j+h\sum_{i=1}^N m_i+\ent(m)+\frac{N\b^2}{2}(1-Q(m))^2\,,\qquad m\in[-1,1]^N\,.
    \ee
    \generalxi{
    $$
        F_{\TAP}(m):=\b H_{N}\left(m\right)+h\sum_{i=1}^N m_i+\ent(m)+N \On(Q(m))\,
    $$
    where
    $$
        \On(q):=\frac{\beta^2}{2}\left(\xi(1)-\xi'(q)(1-q)-\xi(q)\right).
    $$
    }
    Here $\b>0, h\in\R$, $J=\{J_{ij}\}_{i,j\in[N]}$ denotes a symmetric matrix with centred Gaussian entries: we take i.i.d. diagonal entries with $\mathbb{E}[J^2_{ii}]=2$, $i\in[N]$ and i.i.d. upper triangular entries with $\mathbb{E}[J_{ij}^2]=1$, $i,j\in[N]$; moreover $\ent(m)$ is the sum of the coin tossing entropy computed in the coordinates of $m\in[-1,1]^N$ and 
    \be\label{eq:random_Q}
        Q(m):=\frac{\|m\|_2^2}{N}\,. 
    \ee
    The Kac-Rice formula then gives for any measurable $B\subset(-1,1)^N\setminus\{0\}$
    \be\label{eq:TAP-KR}
    \mathbb{E}[\card\{m\in B\,:\,\nabla \FTAP(m)=0\}]=\int\!\!f(m)\mathbb{E}\left[\left|\det(\nabla^2 F_{\TAP})(m)\right|\,|\,\,\,\nabla F_{\TAP}(m)=0\right]dm\,,
    \ee
    where $f(m)$ is the density of the vector $\nabla \FTAP$ computed in zero (whose explicit form is not important here).  
    
    One crucial point in \cite{braymoore} is the computation of the expected random determinant of the Hessian appearing in \eqref{eq:TAP-KR}. This is the Bray-Moore determinant.
    The computation is rather delicate, even for $h=0$. Indeed in general computing the expected value of the modulus of the determinant by the replica method is a genuine technical challenge \cite{fyo}. Therefore different methods have been proposed to compute the determinant without the modulus and to obtain from this an estimate for the quantity with modulus. This question generated a debate in the theoretical physics community that lasted for over twenty years and which we briefly summarize below, referring to e.g. \cite{braymoore,plefka,kurchan,potters,cavagna,crisanti,braymoore2,rizzo,fyo} for more details.

    The gradient of the TAP free energy appearing in \eqref{eq:TAP-KR} is
    \begin{equation}
        \nabla F_{\TAP}(m)=\frac{\b}{\sqrt N}Jm+h-\atanh(m)-2\b^2m(1-Q(m))\,,\label{eq:nablaH-expl}
    \end{equation}
    \generalxi{
        \begin{equation}
            \nabla F_{\TAP}(m) = \nabla H_N(m)+h-\atanh(m)-\b^2m(1-Q(m))\xi''(Q(m))\,,\label{eq:nablaH-expl}
        \end{equation}
    }
    and the Hessian is
    \begin{equation}\label{eq:nabla2H-expl}
        \nabla^2 F_{\TAP}(m)
        =
        \frac{\b J}{\sqrt N}
        -
        D_N(m)
        +
        4\frac{\b^2}{N}mm^T\,,
        \end{equation}
    \generalxi{
        \begin{equation}
            \nabla^2 F_{\TAP}(m)
            =
            \nabla^2 H_N(m)
            -
            D_N(m)
            +
            \frac{2\b^2}{N}(\xi''(Q(m))-(1-Q(m))\xi'''(q))mm^T\,,\label{eq:nabla2H-expl}
        \end{equation}
    }   
    where
    \be\label{eq:defD}
        D_N(m):=\diag(\{(1-m_i^2)^{-1}+2\b^2(1-Q(m))\}_{i\in[N]})\,.
    \ee
    \generalxi{
        \be\label{eq:defD}
            D_N(m)
            :=
            \diag(
                \{(1-m_i^2)^{-1}
                +
                \b^2(1-Q(m))\xi''(Q(m))\}_{i\in[N]}
            )\,. 
        \ee
    }
    We study the determinant of \eqref{eq:nabla2H-expl} conditioned on the TAP equations $\nabla F_{\TAP}(m)=0$ being satisfied. 
    
    In \cite{braymoore} Bray and Moore computed (for $h=0$) the r.h.s. of (\ref{eq:TAP-KR}) neglecting the rank-one projector in \eqref{eq:nabla2H-expl} and arguing that the remaining term of the Hessian matrix is positive definite, which justifies dropping the absolute value. Identifying so $|\det|$ with $\det$, they found the following variational formula (see \cite[equation (13)]{braymoore})
    \bea
        &&\lim_{N \to \infty} \frac1N\log \mathbb{E}\left[\left|\det(\nabla^2 F_{\TAP})\right|\,|\,\,\,\nabla F_{\TAP}(m)=0\right]\label{eq:laformula}\\
        &=&\min_{g\leq 1-Q(m)} \left(\b^2g^2+\frac1N\sum^N_{i=1}\log((1-m_i^2)^{-1}
        +
        2\b^2(1-Q(m))-2\b^2g)\right)
        =:
        \Upsilon(\b,m)\,\label{eq:defUps}\,.
    \eea
    \generalxi{
        \bea
            &&\lim_N \frac1N\log \mathbb{E}\left[\left|\det(\nabla^2 F_{\TAP})\right|\,|\,\,\,\nabla F_{\TAP}(m)=0\right]\label{eq:laformula}\\
            &=&\min_{g\leq 1-Q(m)} \left(
                \frac{\b^2\xi''(Q(m))}{2}
                    g^2+\frac1N\sum^N_{i=1}\log((1-m_i^2)^{-1}
                +
                \b^2(1-Q(m))\xi''(Q(m))
                -
                \b^2\xi''(Q(m))g)
            \right)
            =
            :\Upsilon(\b,m)\,\label{eq:defUps}\,.
        \eea
    }
    They then assumed $g=1-Q(m)$ for the minimiser above and moved on with the complexity computation.

    In this paper we extend recent mathematically rigorous techniques for computing such expectations of determinants {\emph {with}} the absolute value \cite{ABAC,fyodorovHighDimensionalRandomFields2013,subag2017complexity,ben,Mon1} to derive a rigorous proof of the formula of Bray-Moore.
    We furthermore compute a correction term, which is subleading for most $m$, 
    but becomes large when $m$ is close to zero. In a future work we will use the estimates proved here to compute the complexity of TAP solutions from \eqref{eq:TAP-KR} mathematically rigorously, and controlling the correction term is necessary to obtain a result for $m$ close to zero when $h=0$. The correction term arises because of an outlier eigenvalue of the Hessian which is close to zero. Unlike the leading order term, it is affected by the rank-one projector in \eqref{eq:nabla2H-expl}.

    In the physics literature Plefka \cite[Section 3]{plefka} and later Kurchan \cite{kurchan} pointed out that the rank-one projector term in \eqref{eq:nabla2H-expl} cannot be overlooked a priori. Kurchan \cite{kurchan} noted an apparent contradiction arising from Morse theory (see for instance \cite[page 100-101]{bott}), which implies that the quantity 
    \begin{equation}\label{eq:morse}
        \sum_{m^* \text{ critical points of $F_{\TAP}$} } (-1)^{\#\text{ positive eigenvalues of }\nabla^2 F_{\TAP}(m^*)}
    \end{equation}    
    is a topological invariant of the space $[-1,1]^N$ (called the Euler-Poincar\'e characteristic). Its value for $\beta$ small - and therefore for all $\beta$ due to invariance - is expected to be one. At the same time, for $\beta$ large one expects the sum \eqref{eq:morse} to have an exponential number of terms, which seems hard to reconcile with it equalling one. Kurchan suggested that the issue may be resolved by the presence of a subleading prefactor. Later Aspelmeier, Bray and Moore \cite{braymoore2} interpreted this as the appearance of critical points in pairs so as to cause cancellations in in \eqref{eq:morse}. They related this to the outlier eigenvalue of the Hessian, and to a small prefactor in the complexity calculation. Our work is a mathematically rigorous confirmation of part of this analysis.
        
We now state the main result. The correction term is formulated in terms of the quantity (recall \eqref{eq:random_Q}, \eqref{eq:defD})
    \be\label{eq:defu-v}
        v=v(m):=\atanh m-h+2\b^2m(1+Q(m))-D_N(m)m\,.
    \ee    
    
    \begin{theorem}\label{thm:MainDet}
        For all $\a>0$ there exists a constant $c:=c(\a)$ such that
        \be\label{eq:MainDet}
            \left|
                \frac1N\log \mathbb{E}\left[|\det\nabla^2 F_{\TAP}(m)|\,\big|\, \nabla \FTAP(m)=0 \right]
                -
                \Upsilon(\b,m)
                -
                \frac1N\log\(\frac{\|v\|^2_2}{\|m\|^2_2}+\frac{|(m,v)|}{\|m\|^2_2}-\frac{|(m,v)|^2}{\|m\|^4_2}\)
            \right|
            \leq
            \frac{c}{N^{\frac{1}{120}}}\,,
        \ee
        for all $N\ge 1$ and all 
        $m \in (-1,1)^N$
        satisfying
        \be\label{eq:cutoff-m}
            \max_{i \in [N]}|m_i| \le 1 - e^{-\a\sqrt N}
        \ee        
        and
        \begin{equation}\label{eq:v_cond}
            \frac{\|v\|^2_2}{\|m\|^2_2}+\frac{|(m,v)|}{\|m\|^2_2}-\frac{|(m,v)|^2}{\|m\|^4_2}>0
        \end{equation}
        (cf. the second $\log$ term in \eqref{eq:MainDet}),
        where $\Upsilon$ is as in \eqref{eq:defUps} and $v$ is as in \eqref{eq:defu-v}.        
        
        Furthermore, for each $N\ge 1$  there are finitely many $m\in(-1,1)^N$ for which \eqref{eq:v_cond} does not hold, and for these $m$ the l.h.s. of \eqref{eq:v_cond} equals zero and $\mathbb{P}\left[ \det(\nabla^2 \FTAP(m)) = 0 \,\big|\, \nabla \FTAP(m)=0 \right]=1$.
        
        Lastly, for $m \in (-1,1)^N \setminus\{0\}$ not satisfying \eqref{eq:cutoff-m}
        \been{
            \label{eq:outside.TH}
            \aled{
                \frac1N\log\E[
                    |\det\nabla^2 F_{\TAP}(m)| 
                    \big|
                    \nabla \FTAP(m)=0  ]&\leq \frac1N\log\(\frac{\|v\|^2_2}{\|m\|^2_2}+\frac{|(m,v)|}{\|m\|^2_2}-\frac{|(m,v)|^2}{\|m\|^4_2}\)\\
                &+2\log\(16\b(1+\b^2)\)-\frac{17}{N}\sum_{i\in[N]}\log(1-m^2_i).
        }}
    \end{theorem}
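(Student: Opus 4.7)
The approach hinges on the deterministic identity $\nabla^2 F_{\TAP}(m)\,m = v$, valid almost surely under the conditioning $\nabla F_{\TAP}(m)=0$ (verified by substituting \eqref{eq:nablaH-expl}--\eqref{eq:defu-v} into \eqref{eq:nabla2H-expl} and using $(4\beta^2/N)(mm^T)m=4\beta^2 Q(m)m$). Two consequences organize the whole argument. First, if the l.h.s.\ of \eqref{eq:v_cond} vanishes, then $v=0$, so $m\in\ker(\nabla^2 F_{\TAP}(m))$ and $\det(\nabla^2 F_{\TAP}(m))=0$ almost surely; since $v$ is real-analytic in $m\in(-1,1)^N$ and not identically zero, and since the system $v(m)=0$ is a perturbation of $N$ decoupled transcendental equations, its zero set restricted to the compact region defined by \eqref{eq:cutoff-m} is finite, settling the middle bullet of the theorem. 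Second, in an orthonormal basis $\{\hat m\}\cup\{e_i\}_{i=1}^{N-1}$ with $\{e_i\}$ an ONB of $m^\perp$, the Hessian takes the block form
\begin{equation*}
\nabla^2 F_{\TAP}(m) \;=\; \begin{pmatrix}a & b^T\\ b & C\end{pmatrix},\qquad a=\frac{(m,v)}{\|m\|^2},\quad b_i=\frac{(e_i,v)}{\|m\|},
\end{equation*}
with $a,b$ deterministic under the conditioning and $C$ equal to the compression of $M_0:=(\beta/\sqrt N)J-D_N(m)$ to $m^\perp$ (the rank-one $(4\beta^2/N)mm^T$ kills $m^\perp$). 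The Schur complement then gives $\det\nabla^2 F_{\TAP}(m)=\det(C)\bigl(a-b^T C^{-1} b\bigr)$, separating a bulk factor from an outlier/rank-one factor.

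The randomness is handled via standard Gaussian conditioning: $J\mid\nabla F_{\TAP}(m)=0$ is distributed as $J_0+\tilde J$, with $J_0$ deterministic of rank $\le 2$ (the minimal-norm symmetric matrix with $J_0 m = (\sqrt N/\beta)(\atanh m - h + 2\beta^2 m(1-Q(m)))$) and $\tilde J$ a GOE conditioned on $\tilde J m=0$, whose compression to $m^\perp$ is a genuine $(N-1)$-dimensional GOE independent of the deterministic data $(a,b)$. The leading-order asymptotics
\begin{equation*}
\frac{1}{N}\log\E\bigl[|\det C|\bigr]\;=\;\Upsilon(\beta,m)+O(N^{-1/120})
\end{equation*}
under \eqref{eq:cutoff-m} would then be obtained by adapting the rigorous framework for determinants of deformed Wigner matrices (cf.\ \cite{ABAC,fyodorovHighDimensionalRandomFields2013,subag2017complexity,ben,Mon1}): write $|\det C|=\sqrt{\det(C^TC)}$, apply Hubbard--Stratonovich on the Wigner--diagonal structure, and recognize the saddle equation $g=\frac1N\sum_i(D_N(m)_{ii}-2\beta^2 g)^{-1}$ as the self-consistent equation producing $\Upsilon$. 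The cutoff \eqref{eq:cutoff-m} is what controls $\|D_N(m)\|_{\mathrm{op}}$ well enough for the required concentration and tail estimates to hold at the stated polynomial rate.

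For the subleading outlier factor, one shows that $b^TC^{-1}b$ concentrates at a deterministic limit determined by the limiting Stieltjes transform $S_C(0)$: since $b$ is deterministic and the eigenvectors of $C$ are isotropic in the $(N-1)$-sphere, $b^TC^{-1}b=\|b\|^2\cdot \frac{1}{N-1}\Tr(C^{-1})+\mathrm{fluctuations}$. Identifying $S_C(0)$ via the self-consistent equation defining $\Upsilon$ and carefully tracking the sign of the outlier eigenvalue created by the rank-one perturbation (using the identity $M_0 m = v-4\beta^2 Q(m) m$, the signed factorization $\det\nabla^2 F_{\TAP}/\det M_0 = 1+(4\beta^2/N)m^T M_0^{-1}m$, and the $m\mapsto -m$ symmetry in $|\cdot|$) yields the concentration of $|a-b^TC^{-1}b|$ at $|a|+\|b\|^2 = \|v\|^2/\|m\|^2+|(m,v)|/\|m\|^2-(m,v)^2/\|m\|^4$, i.e.\ the second log term of \eqref{eq:MainDet}. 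This reflects precisely the outlier eigenvalue of the Hessian emphasized by Kurchan and Aspelmeier--Bray--Moore. Combining with the bulk asymptotics and using the independence of $C$ from $(a,b)$ produces \eqref{eq:MainDet}.

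Finally, for the coarse bound \eqref{eq:outside.TH} one drops \eqref{eq:cutoff-m}. Here the detailed spectral analysis breaks down since some diagonal entry of $D_N(m)$ may be exponentially large, but one can still estimate each Schur factor brutally: $\E[|\det C|]\le\E[\prod_i\|Ce_i\|_2]$ by Hadamard, and Cauchy--Schwarz together with a Gaussian moment bound $\E\|\tilde J\|_{\mathrm{op}}^{2N}\le (c\sqrt N)^{2N}$ gives the claimed $(16\beta(1+\beta^2))^N\prod_i(1-m_i^2)^{-17}$-type estimate (the $17$ coming from optimizing the Cauchy--Schwarz step on the diagonal of $D_N(m)$), while the scalar $|a-b^TC^{-1}b|$ is bounded by $|a|+\|b\|^2$ up to a polynomial-in-$N$ factor absorbable in the multiplicative constants. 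The technically hardest step is the subleading outlier analysis: producing the quantitative rate $O(N^{-1/120})$ requires fine local control of the resolvent of the deformed GOE near the origin, both to propagate the rigorous determinant asymptotics for $C$ from \cite{ABAC,Mon1,ben} to our specific deformed setting and to sharply identify the limit of $b^TC^{-1}b$ so that the outlier contribution reduces to the clean form $|a|+\|b\|^2$ rather than a less explicit expression.
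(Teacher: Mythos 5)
Your overall architecture matches the paper's: under the conditioning the row/column of the Hessian in the direction of $m$ is deterministic and given by $v$ (your identity $\nabla^2F_{\TAP}(m)m=v$ is correct and is the content of Lemma \ref{lem:cond_law} after rotation), the bulk block is a deformed GOE whose log-determinant gives $\Upsilon$, and the prefactor produces the correction term. The genuine gap is in your outlier step. First, the eigenvectors of $C$ are \emph{not} isotropic: $C$ is the compression to $m^\perp$ of $\frac{\b}{\sqrt N}J-D_N(m)$, and the deterministic deformation destroys rotational invariance, so the replacement $b^TC^{-1}b\approx\|b\|_2^2\,\frac{1}{N-1}\Tr(C^{-1})$ has no justification. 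Second, the asserted concentration of $|a-b^TC^{-1}b|$ at $|a|+\|b\|_2^2$ is not true as a statement about the scalar (when $D_N$ dominates, $|b^TC^{-1}b|\ll\|b\|_2^2$; near-kernel eigenvalues of $C$ give it heavy-tailed fluctuations); what must actually be controlled is $\E\bigl[\,|\det C|\,|a-b^TC^{-1}b|\,\bigr]$, i.e.\ $\E\bigl[\,|a\,\det Z^{(\parallel)}-\|b\|_2^2\,\det Z^{(\perp)}|\,\bigr]$, where the two determinants are strongly correlated. The paper's mechanism is different and is what makes the proof work: when $|(m,v)|$ is not exponentially small it writes the determinant as $\frac{(m,v)}{\|m\|_2^2}\det\bigl(Z^{(\parallel)}-\frac{\|x\|_2^2}{(m,v)}P_{e^1_{N-1}}\bigr)$ (Lemma \ref{lemma:neperpnepara}) and invokes Theorem \ref{thm:GOE_plus_D_det}, whose whole point is that a deterministic low-rank perturbation of arbitrary subexponential norm does not change the exponential asymptotics, so no concentration of $b^TC^{-1}b$ is ever needed; when $|(m,v)|$ is exponentially small it uses the two-term expansion plus the crude ratio bound of Lemma \ref{lemma:verysleepy}, and the $N^{-1/120}$ rate comes precisely from this regime splitting (Proposition \ref{prop:thelastone}). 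Your proposal has no substitute for either device. Relatedly, the deterministic bound you invoke for \eqref{eq:outside.TH}, $|a-b^TC^{-1}b|\le(|a|+\|b\|_2^2)\cdot\mathrm{poly}(N)$, is false; the paper instead bounds $\frac{|(m,v)|}{\|m\|_2^2}\E|\det Z^{(\parallel)}|+\frac{\|x\|_2^2}{\|m\|_2^2}\E|\det Z^{(\perp)}|$ directly (Proposition \ref{prop:outside}).

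Two further steps are not established. The finiteness of $\{m\in(-1,1)^N:\,v=0\}$ does not follow from ``$v$ is real-analytic and not identically zero'': zero sets of real-analytic maps can have positive dimension, and in any case the theorem asserts finiteness on all of $(-1,1)^N$, not only on the region \eqref{eq:cutoff-m}. The paper's Lemma \ref{lemma:finitelymany} exploits the specific structure of $v$: $v=0$ is equivalent to $U(m_i)=Q(m)$ for all $i$ for a scalar $U$ with at most three monotone branches, reducing the problem to finitely many zeros of a one-variable analytic function; some argument of this kind is required. Finally, the bulk asymptotics $\frac1N\log\E|\det C|=\Upsilon(\b,m)+O(N^{-1/120})$ is only gestured at by ``adapting'' \cite{ben} and \cite{Mon1}; as the paper stresses, these do not apply as black boxes because under \eqref{eq:cutoff-m} one only has $\|D_N(m)\|_{\rm op}\le 2e^{2\a\sqrt N}$, and establishing the quantitative rate for such unbounded deformations, together with the identification $\int\log|x|\,\mu_m(dx)=\Upsilon(\b,m)$ and the analysis of the minimizer (Proposition \ref{lemma:vardet}), occupies Sections \ref{sect:GOE}--\ref{sect:logdet}; your sketch leaves all of this to citation.
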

 
    In our future application of Theorem \ref{thm:MainDet} to computing the complexity of TAP solutions - already mentioned above - the sharpness exhibited by the presence of the correction term in \eqref{eq:MainDet}, the rate on the r.h.s. of \eqref{eq:MainDet}, the condition \eqref{eq:cutoff-m} and the estimate \eqref{eq:outside.TH} for $m$-s that violate the condition will be necessary to obtain a result that covers all of $(-1,1)^N$.

    The analysis of the variational principle in (\ref{eq:defUps}) yields two possible choices for the minimisers, namely $g=1-Q$ or $g=g^*\in(0,1-Q)$ (this was already observed in \cite{Mon1}). The first one corresponds in the Bray-Moore notations to setting $B=0$ (first display at page L473 of \cite{braymoore}). We will show that imposing the Plefka condition \cite{plefka} 
    \be\label{eq:Plefkaset0}
    2\b^2\sum_{i=1}^N(1-m_i^2)^2\leq N\,
    \ee
    forces the minimiser to be in fact $1-Q$. 
    
    Among the recent contributions on the topic of asymptotics of random determinants, the analyses of \cite{Mon1} and \cite{ben} greatly inspired this work. 
    In \cite{Mon1} the first mathematical results on the Bray-Moore random determinant have been achieved. Indeed the authors obtain an upper bound for \eqref{eq:laformula} valid for $Q(m)$ bounded away from zero (a regime in which the correction given by the isolated eigenvalue is negligible) and corresponding to formula (\ref{eq:defUps}) evaluated in $g=1-Q$ (see \cite[Proposition 3.2]{Mon1}).
    In \cite{ben} a very systematic study of the behaviour of absolute value of determinants at exponential scale of a wide class of random matrices has been carried out. However, the Bray-Moore determinant studied here does not satisfy the assumptions of \cite{ben}. More precisely, two crucial problems prevent us from applying the result of \cite{ben} as a black-box: the operator norm of $D_N(m)$ in (\ref{eq:defD}) grows to infinity as $m$ nears the boundary of $[-1,1]^N$, and the presence of a single outlier in the spectrum which can be arbitrarily close to zero independently of $N$. These two features prevent the informal assumption (1) or the assumption (C) of \cite{ben} from holding, and pose new technical challenges that we address in this paper. 
    
    As an intermediate step towards the proof of our main Theorem \ref{thm:MainDet}, we prove the following asymptotics for random matrices of the form $\text{GOE}+D$ where the operator norm of $D$ is large, in the sense that \cite[equation (1.4)]{ben} is violated, but the spectrum is sufficiently separated from zero, that is \cite[equation (1.5)]{ben} is satisfied (compare also with \cite[Remark 1.4]{ben} and \cite{ben2}). We also include a (large) lower rank term $A$ and show that its presence does not affect the asymptotics of the determininat. 
    
    In what follows $\|D\|_{\rm op}$ denotes the operator norm of $D$ and $\mu\boxplus\nu$ denotes the additive convolution between the probability measure $\nu$ and the semi-circle law $\mu_{{\rm sc},\b}$, defined by
    \be\label{eq:semicerchio}
    \mu_{{\rm sc},\b}(dx):=\bm{1}_{[-2\b, 2\b]}\frac{1}{2\pi}\sqrt{4 \b^2-x^2}\,.
    \ee
    
    \begin{theorem}
    \label{thm:GOE_plus_D_det}
    Let $N\in\N$, $\b>0$, $J$ be a $N\times N$ GOE with $\E[J^2_{ij}]=1+\d_{ij}$ for all $i\leq j$. Let $A,D\in \R^{N\times N}$ be deterministic $N\times N$ real symmetric matrices, such that 
    \been{
    \|A\|_{op}, \|D\|_{\rm op}\leq N^{1-\d}\,
    }
    for some $\d>0$ and $\rank(A)\leq p\leq N/2$. 
    Then there exists some $c:=c(\b)$ such that
    \been{\label{eq:GOE_plus_D_det}
        \left|
            \frac{1}{N}\log\mathbb{E}\left[\left|\det\left(\frac{\b}{\sqrt{N}}J+A+D\right)\right|\right]
            -
            \int\log|x|(\nu_{D}\boxplus\mu_{{\rm sc},\b})(dx)
        \right|
        \leq
            \frac{c(\b)}{N^{\min\(\d,\frac{1}{60}\)}}
            +
            \frac{pc(\b)}{N^{\min\(\d,\frac{1}{2}\)}}
    }
    where $\nu_D$ is the empirical spectral distribution of $D$.
    \end{theorem}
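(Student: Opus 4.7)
The plan is to first reduce to the case $A=0$ by exploiting the low-rank structure of $A$, and then to evaluate $\frac1N\log\mathbb{E}|\det M_0|$ for $M_0:=\frac{\b}{\sqrt N}J+D$ by combining Gaussian concentration of the log-determinant with a local law for deformed Wigner matrices. For the rank reduction, on the event that $M_0$ is invertible one has the identity $\det(M_0+A)=\det M_0\cdot\det(I+M_0^{-1}A)$, and since $\rank(M_0^{-1}A)\leq p$ this gives $|\log|\det(I+M_0^{-1}A)||\leq p\log(1+\|M_0^{-1}\|\,\|A\|)$. Combining a polynomial lower bound $\sigma_{\min}(M_0)\gtrsim N^{-\k}$ (available from small-ball / Littlewood--Offord estimates for Gaussian matrices, robust under the additive deterministic shift $D$) with the standard high-probability bound $\|J\|\leq C\sqrt N$ and the assumption $\|A\|_{\rm op}\leq N^{1-\d}$ controls this perturbation by $O(p(\log N)/N)$ on a high-probability event, well inside the claimed $pc/N^{\min(\d,1/2)}$.

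To treat $\frac1N\log\mathbb{E}|\det M_0|$, I would regularise the logarithmic singularity: for $\eta>0$ set $L_\eta:=\frac12\log\det(M_0^2+\eta^2 I)$, a function of the Gaussian entries of $J$ with Lipschitz constant $O(\sqrt N/\eta)$. The Gaussian log-Sobolev inequality then yields concentration of $L_\eta$ around $\mathbb{E}[L_\eta]$ at scale $O(1/\eta)$, and the smallest-singular-value bound ensures that $L_\eta$ differs from $\log|\det M_0|$ by $o(N)$ with high probability, so that $\frac1N\log\mathbb{E}|\det M_0|=\frac1N\mathbb{E}[L_\eta]+o(1)$ with a quantitative rate. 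To compute $\frac1N\mathbb{E}[L_\eta]$ it then suffices to invoke a local law for the deformed GOE $M_0$ adapted to the regime $\|D\|_{\rm op}\leq N^{1-\d}$: the Stieltjes transform of $\nu_{M_0}$ is close to that of $\nu_D\boxplus\mu_{{\rm sc},\b}$ on mesoscopic scales, and standard Stieltjes-transform manipulations (Helffer--Sj\"ostrand or direct integration in the spectral parameter) convert this into the approximation $\frac1N\mathbb{E}[L_\eta]\approx\int\tfrac12\log(x^2+\eta^2)(\nu_D\boxplus\mu_{{\rm sc},\b})(dx)$, which tends to $\int\log|x|(\nu_D\boxplus\mu_{{\rm sc},\b})(dx)$ as $\eta\to 0$.

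The principal obstacle is the simultaneous optimisation of three error exponents --- the lower-bound exponent $\k$ for $\sigma_{\min}(M_0)$, the regularisation scale $\eta$, and the local-law scale --- each entering as a negative power of $N$ that must be balanced against the others. The specific rate $N^{-\min(\d,1/60)}$ in the theorem plausibly arises from such an optimisation after also paying for low-probability bad events, with the $\d$-dependence reflecting the constraint on $\|D\|_{\rm op}$. Relative to \cite{ben}, the genuinely new difficulty is exactly what the authors flag in the paragraph preceding the theorem: the operator norm bound $\|D\|_{\rm op}\leq N^{1-\d}$ is much larger than the $O(1)$ regime of \cite[eq.~(1.4)]{ben}, so both the smallest-singular-value estimate and the local law must be (re)proved in a setting where the deterministic shift dominates the randomness. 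I expect this quantitative control of $\sigma_{\min}(M_0)$ uniformly in a polynomially large $D$ to be the hardest single ingredient.
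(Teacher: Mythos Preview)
Your outline matches the paper's strategy closely: regularise the singularity by adding $i\eta$, use Gaussian concentration for the regularised log-determinant, compare to the free convolution via a Stieltjes-transform estimate, and treat the low-rank perturbation $A$ separately. The organisational difference is the order of the last two steps. You rank-reduce \emph{before} regularising, invoking $\sigma_{\min}(M_0)$ to bound $|\log|\det(I+M_0^{-1}A)||$; the paper instead does the rank reduction \emph{after} regularisation (their Lemma~\ref{lemma:erstatz}, an elementary Weyl-inequality comparison of $L(A)$ and $L(B)$ when $\rank(A-B)\le k$), applied to $Z+i\e$ and $Z+A+i\e$ where both have least singular value $\ge\e$ automatically. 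This ordering avoids needing any smallest-singular-value input for the $A$-removal and is somewhat cleaner; in your ordering you would also need $\sigma_{\min}(M_0+A)$ for the matching lower bound on $|\det(I+M_0^{-1}A)|$, which you did not mention but which follows from the same shift-invariant estimate.

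Two further remarks. First, your identification of the ``hardest single ingredient'' is off: the smallest-singular-value bound the paper uses (Lemma~\ref{lem:lminP}, taken from \cite{SST}) is $\P(\lambda_{\min}(Z)\le 1/t)\le CN^2/(\b t)$ and is \emph{completely independent} of the deterministic shift $D$ --- it holds for GOE plus any symmetric matrix --- so no new work is needed there. The genuinely new analysis for polynomially large $\|D\|$ is rather in the Stieltjes-transform comparison (their Lemma~\ref{lem:convergence}) and in the de-regularisation step (Proposition~\ref{lemma:panini0}), where one must control the number of eigenvalues of $Z$ near zero. Second, you pass from a ``high-probability'' bound to a bound on $\E[|\det|]$ without comment; this step is not free, since on the bad event $|\det|$ can be as large as $\|Z\|^N$. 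The paper closes this via Cauchy--Schwarz, bounding $\E[e^{NL(Z+i\e)}1_{W^c}]\le\sqrt{\E[e^{2NL(Z+i\e)}]}\sqrt{\P(W^c)}$ and controlling the second moment through the same concentration estimate with $r=2$; you should make this explicit in your argument.
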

    
    The rest of the paper is organised as follows. In Section \ref{sect:GOE} we prove a number of useful properties of the Stieltjes transform and of the additive convolution with the semicircle law (\ref{eq:semicerchio}) of a probability measure. The proof of Theorem \ref{thm:GOE_plus_D_det} is presented in Section \ref{section:RegDet} and Section \ref{sect:logdet}: in the first one we focus on the case in which the lower rank matrix $A$ is absent, while the effect of this contribution is evaluated in Section \ref{sect:logdet}. In Section \ref{sect:prel} we show that the Bray-Moore formula (\ref{eq:laformula}) is directly related to the additive convolution of the spectral measure of $D_N$ with the semicircle law. This reduces the problem of computing the Bray-Moore determinant to proving Theorem \ref{thm:MainDet2} below, in which the additive convolution appears in the asymptotics of the log-determinant (similarly as in Theorem \ref{thm:GOE_plus_D_det}). Theorem \ref{thm:MainDet2} is proven in Section \ref{sect:noN}. The proof is based on a convenient representation of the Bray-Moore determinant, which on a very high level can be written as 
    $$
    \sum_{i}\k_i(m)\det({\rm GOE}+D_N(m)+\text{rank-1 matrix}_i)1_{\mathcal P_i}(m),
    $$
    where $\{\mathcal P_i\}$ is a suitable partition of $(-1,1)^N$ to which we remove a neighbourhood of the boundary, see \eqref{eq:cutoff-m}, and $\k_i\in\R$. The crucial point is that the rank-1 matrices in the representation above are deterministic, so we are able to recover the asymptotics of the determinants by Theorem \ref{thm:GOE_plus_D_det}. The factors $\k_i(m)$ are explicitly computed and they are responsible for the correction to the Bray-Moore variational formula in (\ref{eq:MainDet}).
        
    \subsection{Notation}
    We use the standard notation $[N]:=\{1,\ldots, N\}$ for $N\in\N$. 
    Everywhere $(\cdot,\cdot)$ denotes the Euclidean inner product in $\R^N$ and $\|x\|_2^2:=(x,x)$. 
    We write complex numbers $z=u+iv$, with $u,v\in \R$. $\C^+$ is the complex half-plane with strictly positive imaginary part.
    
    Throughout we call a GOE matrix $J=\{J_{ij}\}_{i,j\in[N]}$ a doubly indexed sequence of centred Gaussian random variables, independent modulo $J_{ij}=J_{ji}$ and normalised such that $\mathbb{E}[J^2_{ii}]=2$, $i\in[N]$ and $\mathbb{E}[J_{ij}^2]=1$ for $i<j$.
    
    The identity matrix on $\R^N$ is denoted by $I_N$. 
    Given $N\in \N$ and $i\in[N]$, we denote by $e^i_N\in \R^N$ the standard basis vectors:
    \been{
    \label{eq:def_eM}
    (e^i_N)_{k}:=\d_{ik},\quad \forall k\in [M].
    }
    If $E$ is a linear subspace of $\R^N$, we let $P_E$ denote the associated orthogonal projector and $P_E^\perp:=I_N-P_E$. For $v_1,v_2\in\R^N$ we shorten $P_{v_1}:=P_{\Span\{v_1\}}$ and $P_{v_1,v_2}:=P_{\Span\{v_1,v_2\}}$. Moreover we set $P_{[k]}:=P_{\Span\{e^1_N\ldots e^k_N\}}$.

     Given a $N\times N$ symmetric matrix $M$, let $$\lambda_1(M)\geq \lambda_2(M)\geq \cdots\geq\lambda_N(M)$$ be the eigenvalues of $M$ sorted in increasing order. Also, we define the least eigenvalue
    \been{\label{eq:lleast}
    \l_{\min}(M):=\min_j\{|\lambda_j(M)|:\,j\in[N]\,\}.
    }
    For such a matrix $M$, we write $M^2:=M^*M$, where $M^*$ is the Hermitian conjugate of $M$. We write respectively the operator and Frobenius norm as
    $$
    \|M\|_{\rm op}:=\sup_{\|x\|_2=1}\|Mx\|_2\,\,,\qquad \|M\|_F:=\sqrt{\Tr M^2}.
    $$

    We denote by $M^{([p],[p])}$ be the $(N-p)\times(N-p)$ matrix obtained by removing the first $p$ rows and the first $p$ columns from the matrix $M$, i.e.:
    \been{
    \label{eq:def_Ap}
    (M^{([p],[p])})_{ij}:=M_{i+p,j+p},\quad \forall (i,j)\in [N-p]^2.
    }
    Let us also write $M^{(1,1)}=M^{([1],[1])}$.
    
    The empirical distribution of the eigenvalues of $M$ is
    \been{
        \label{eq:nu_a}
        \nu_M:=\frac{1}{N}\sum^N_{j=1}\delta_{\lambda_j(D)}.
    } 
    Often we will shorten
    \be\label{eq:defZ}
        Z:=\frac{\b}{\sqrt N}J+D,
    \ee
    where $\b>0$, $J$ is a GOE matrix and $D$ a constant matrix. In Section \ref{sect:prel} and Section \ref{sect:noN} we will write $D$ for the matrix $D_N(m)$ defined in \ref{eq:defD}.
    
    The semicircle law with variance $\sqrt2\b>0$ defined by (\ref{eq:semicerchio}) is always denoted by $\mu_{{\rm sc},\b}$. The additive (or free) convolution of the probability measures $\mu,\nu$ is $\mu\boxplus\nu$. The Stieltjes transform of the probability measure $\nu$ is always denoted by $\widehat\nu$. The precise definitions of these objects are given in Section \ref{sect:GOE}.

    $C,c$ everywhere denote positive absolute constants whose value may change from formula to formula. We write $X\lesssim Y$ if $X\leq CY$ and $X\simeq Y$ if $Y\lesssim X\lesssim Y$. For any discrete set $A$, $\card A$ denotes its cardinality and for any Borel set $B$, $|B|$ denotes its Lebesgue measure.

\subsection*{Acknowledgements} The authors are grateful to P. Bourgade, S. Franz, B. McKenna, F. Ricci-Tersenghi, B. Schlein and K. Schnelli for helpful discussions. D.B. and F. C. are supported by the SNSF grants 176918 and 206148.


\section{Stieltjes transform and additive convolution}\label{sect:GOE}

In this section we collect a number of results about Stieltjes transforms and additive convolutions of probability measures that are used in the sequel. We start by recalling the definitions.
Let $\nu$ be a probability measure on $\R$. Its Stieltjes transform always indicated here by $\widehat\nu$ is defined by
\be\label{eq:stile10}
\widehat \nu(z):=\int \frac{\nu(dx)}{z-x},\quad \forall z\in \C^+\,.
\ee

The additive convolution of $\nu$ with the semicircle law $\mu:=\nu \boxplus \mu_{\rm{sc}, \sigma}$ is the unique probability measure whose Stieltjes transform satisfies
\been{
\label{eq:stile20}
\widehat{\mu} (z) =\int_{\R} \frac{\nu(dx)}{x-\sqrt{2}z+2\b^2\widehat{\mu} (z)},\quad \forall z\in \C^+.
}
We list several useful properties of the Stieltjes transform in the next lemma.
\begin{lemma}
Let $\nu$ be a probability measure on $\R$ and denote by $\widehat{\nu}$ its Stieltjes transform. It holds:
\begin{itemize}
    \item for all $z\in \C^+$:
    \been{
    \label{eq:posIm}
    \textup{Im}(\widehat{\nu}(z))> 0,
    }
    \been{
    \label{eq:ope1}
    |\widehat{\nu}(z)|\leq \frac{1}{\textup{Im}(z)},
    }
    and, for any $p\geq 0$:
\been{
\label{eq:oper}
    \int\frac{\nu(dx)}{|x-z|^{2+p}}\leq \frac{\textup{Im}(\nu(z))}{(\textup{Im}(z))^{p+1}},
    }
    with the equality if $p=0$;
 \item for any analytic function $f: \C^+\to \C^+$ and $z\in \C^+$:
 \been{
  \label{eq:ope2}
    |\widehat{\nu}(z+f(z))|\leq \frac{1}{\textup{Im}(z)};
    }
    \item given $c>0$, if $|f(z)|\in (0,c |\textup{Im}(z)|^{-1})$, then it holds:
\been{
 \label{eq:ope3}
\int \frac{\nu(dx)}{|x-z-f(z)|^2}\leq \frac{c}{c+\(\textup{Im}(z)\)^2}\(\frac{\textup{Im}(\widehat{\nu}(z+f(z)))}{\textup{Im}(f(z))}\)
}
\item for any $v>0$:
\been{
 \label{eq:opeint}
\int_{\R} \textup{Im}(\widehat{\nu}(u+iv))du=\pi.
}
\end{itemize}
\end{lemma}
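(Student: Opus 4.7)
My plan is to derive all four bullets directly from the integral representation \eqref{eq:stile10} by elementary computation; no deep tools are needed. The common underlying identity is that for $z = u + iv \in \C^+$ and $x \in \R$, up to the sign convention of \eqref{eq:stile10},
\begin{equation*}
\textup{Im}\!\left(\frac{1}{z-x}\right) = \frac{v}{|z-x|^2}, \qquad \text{hence} \qquad \textup{Im}(\widehat{\nu}(z)) = v \int \frac{\nu(dx)}{|z-x|^2}.
\end{equation*}
This single identity already gives \eqref{eq:posIm}, the equality case $p=0$ of \eqref{eq:oper}, and, using the trivial bound $|z-x| \geq v$, also \eqref{eq:ope1}. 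For general $p \geq 0$ I would factor $|x-z|^{-(2+p)} = |x-z|^{-p} \cdot |x-z|^{-2} \leq v^{-p} |x-z|^{-2}$ and integrate to conclude \eqref{eq:oper}.

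Next, for \eqref{eq:ope2} I would observe that since $f : \C^+ \to \C^+$, the point $w := z + f(z)$ satisfies $\textup{Im}(w) = \textup{Im}(z) + \textup{Im}(f(z)) \geq \textup{Im}(z) > 0$, so $w \in \C^+$; then applying \eqref{eq:ope1} at $w$ gives $|\widehat\nu(w)| \leq 1/\textup{Im}(w) \leq 1/\textup{Im}(z)$. For \eqref{eq:ope3}, applying the equality case of \eqref{eq:oper} at the same $w$ yields
\begin{equation*}
\int \frac{\nu(dx)}{|x-w|^2} = \frac{\textup{Im}(\widehat\nu(w))}{\textup{Im}(z) + \textup{Im}(f(z))},
\end{equation*}
and comparing this with the right-hand side of \eqref{eq:ope3} reduces the claim to the elementary inequality $\textup{Im}(z)\,\textup{Im}(f(z)) \leq c$, which follows immediately from $\textup{Im}(f(z)) \leq |f(z)| \leq c/\textup{Im}(z)$.

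Finally, I would prove \eqref{eq:opeint} by Fubini together with the standard Poisson-kernel evaluation $\int_\R v / ((u-x)^2 + v^2)\,du = \pi$:
\begin{equation*}
\int_\R \textup{Im}(\widehat\nu(u + iv))\,du = \int \nu(dx) \int_\R \frac{v}{(u-x)^2 + v^2}\,du = \pi.
\end{equation*}
No step presents a substantial obstacle; the only mild care needed is the brief algebraic reduction of \eqref{eq:ope3} to $\textup{Im}(z)\,\textup{Im}(f(z)) \leq c$, and keeping track of the sign in the opening identity in accordance with the convention used in \eqref{eq:stile10}.
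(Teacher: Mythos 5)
Your proposal is correct and follows essentially the same route as the paper: the identity $\textup{Im}(\widehat\nu(z))=\textup{Im}(z)\int|x-z|^{-2}\nu(dx)$ plus $|x-z|\geq\textup{Im}(z)$ gives \eqref{eq:posIm}, \eqref{eq:ope1}, \eqref{eq:oper}, the shift $w=z+f(z)$ with $\textup{Im}(w)\geq\textup{Im}(z)$ gives \eqref{eq:ope2}, the reduction of \eqref{eq:ope3} to $\textup{Im}(z)\,\textup{Im}(f(z))\leq c$ is exactly what the paper's algebraic manipulation amounts to, and \eqref{eq:opeint} is Fubini plus the Poisson-kernel integral in both cases. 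Your remark about the sign is well taken: the proof (yours and the paper's) tacitly uses the convention $\widehat\nu(z)=\int\nu(dx)/(x-z)$, which is the one consistent with \eqref{eq:posIm}, despite the opposite sign appearing in \eqref{eq:stile10}.
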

\begin{proof}
Let $x\in \R$ and $z\in \C^+$. We have
\been{
\label{eq:ope0000}
\frac{1}{|x-z|}\leq \frac{1}{\textup{Im}(z)},
}
and
\been{
\label{eq:ope000}
\textup{Im}\(\frac{1}{x-z}\)=\frac{\textup{Im}(z)}{|x-z|^2}.
}
From the identity \eqref{eq:ope000}, we get
\been{
\label{eq:ope0}
\textup{Im}(\widehat{\nu}(z))=\int \textup{Im}\(\frac{1}{x-z}\)\nu(dx)=\textup{Im}(z)\int \frac{\nu(dx)}{|x-z|^2}.
}
So if $z\in \C^+$ then $\textup{Im}(\widehat{\nu}(z))>0$, proving \eqref{eq:posIm}. Given $p\geq 0$, the inequality \eqref{eq:ope0000} and the identity \eqref{eq:ope0} give
\been{
\int\frac{\nu(dx)}{|x-z|^{2+p}}\leq \frac{1}{(\textup{Im}(z))^p}\int\frac{\nu(dx)}{|x-z|^{2}}=\frac{1}{(\textup{Im}(z))^{p+1}}\(\textup{Im}(z)\int \frac{\nu(dx)}{|x-z|^2}\)=\frac{\textup{Im}(\widehat{\nu}(z))}{(\textup{Im}(z))^{p+1}},
}
that proves \eqref{eq:oper}. Using the triangular inequality and \eqref{eq:ope0000} we get
\been{
\label{eq:ope00}
|\widehat{\nu}(z)|\leq\int\frac{\nu(dx)}{|x-z|}\leq \int \frac{\nu(dx)}{|\textup{Im}(x-z)|}=\frac{1}{|\textup{Im}(z)|}\int \nu(dx)=\frac{1}{|\textup{Im}(z)|}
}
which proves the upper bound  \eqref{eq:ope1}. Let us now consider any analytic function $f:\C^+\to \C^+$. Since $\textup{Im}(f(z))> 0$, then
\been{
|\textup{Im}(f(z)+z)|=|\textup{Im}(f(z))+\textup{Im}(z)|\geq \textup{Im}(z),\quad \forall z\in \C^+.
}
Thus, from \eqref{eq:ope00}, we have
\been{
|\widehat{\nu}(z+f(z))|\leq \frac{1}{|\textup{Im}(f(z)+z)|}\leq \frac{1}{\textup{Im}(f(z))+\textup{Im}(z)}\leq \frac{1}{\textup{Im}(z)},
}
proving \eqref{eq:ope2}. Inverting \eqref{eq:ope0}, we get
\been{
\int \frac{\nu(dx)}{|x-z-f(z)|^2}=\frac{\textup{Im}(\widehat{\nu}(z+f(z)))}{\textup{Im}(f(z))+\textup{Im}(z)}.
}

If $|f(z))|\leq c\,\textup{Im}(z)^{-1}$, then $\textup{Im}(f(z))\in (0,c\,\textup{Im}(z)^{-1})$ and the above formula gives
\been{
\label{eq:imineq2}
\int \frac{\nu(dx)}{|x-z-f(z)|^2}=\frac{c}{c+c\frac{\textup{Im}(z)}{\textup{Im}(f(z))}}\(\frac{\textup{Im}(\widehat{\nu}(z+f(z)))}{\textup{Im}(f(z))}\)\leq \frac{c}{c+\(\textup{Im}(z)\)^2}\(\frac{\textup{Im}(\widehat{\nu}(z+f(z)))}{\textup{Im}(f(z))}\)
}
and this proves \eqref{eq:ope3}.

Finally, we prove the integration formula \eqref{eq:opeint}:
\been{
\label{eq:Stile_int}
\int_{\R}\textup{Im}(\widehat{\nu}(u+iv))du
=\int_{\R}\nu(dx)\(\int_{\R}\frac{vdu}{(x-u)^2+v^2}\)
\leq \int_{\R}\nu(dx)\(\int_{\R}\frac{dy}{y^2+1}\)= \pi,
}
where, in the first equality, we used the identity \eqref{eq:ope000} and the Fubini Theorem, and in the last equality we use the integral formula $\int_{\R}(x^2+1)^{-1}dx=\pi$ and $\int_{\R}\nu(dx)=1$.
\end{proof}

Consider now a GOE $N\times N$ matrix $J$ and a deterministic $N\times N$ symmetric matrix $D$. Set for $\b>0$
\been{
\label{eq:Zf}
Z:=\frac{\b}{\sqrt{N}}J+D
}
We now study the $L_p$ convergence of the Stieltjes transform of the empirical spectral measure of $Z$ to the Stieltjes transform of $\nu_{D}\boxplus\mu_{{\rm sc},\b}$. To this end we generalise the approach of \cite[Lemma 4.6]{Mon1}.

Let $z=u+i v$, with $u\in \R$ and $v>0$. We define
\been{\label{eq:G_stile}
G_Z(z):=(Z-z\,I_N )^{-1}\,,\qquad G(z):=(D-(z+2\b^2 \E\[\widehat{\nu}_Z(z)\])I_N)^{-1}. 
}
Without loss of generality we assume in the remaining of this section that $D$ is diagonal. This in turn implies that also $G$ is diagonal.

Next we give two accessory lemmas.
\begin{lemma}\label{lemma:verde}
The following bounds hold true:
\bea
\max_{i\in[N]}|G_{ii}(z)|&\leq& \frac1v\,, \label{upper_G}\\
\|G\|_F&\leq& \frac{\sqrt{N}}{v}\label{upper_GHS}\\
\E[\|G_Z\|^2_F]&\leq& \frac{N\E\[\textup{Im}(\widehat\nu_Z(z))\]}{v}\,, \label{upper_GZHS-1}\\
\E[\|G_Z^2\|^2_F]&\leq& \frac{N\E\[\textup{Im}(\widehat\nu_Z(z))\]}{v^3} \label{upper_GZHS-2}\,.
\eea
\end{lemma}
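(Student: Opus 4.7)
The plan is to derive all four bounds as routine consequences of the spectral representations of the resolvents $G$ and $G_Z$, combined with the Stieltjes transform identities collected in the preceding lemma. Under the working assumption that $D$ is diagonal, the matrix $G(z)$ is itself diagonal with entries
$$
G_{ii}(z) = \frac{1}{D_{ii} - z - 2\b^2\,\E[\widehat{\nu}_Z(z)]},
$$
while diagonalising $Z = U\Lambda U^{*}$ exhibits the eigenvalues of $G_Z$ and of $G_Z^2$ as $(\lambda_i(Z) - z)^{-1}$ and $(\lambda_i(Z) - z)^{-2}$ respectively. These two pictures will drive the four estimates.

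For \eqref{upper_G}, I would invoke \eqref{eq:posIm} to observe that $\textup{Im}(\E[\widehat{\nu}_Z(z)]) > 0$, so that the imaginary part of the denominator above has absolute value at least $v = \textup{Im}(z)$, giving $|G_{ii}(z)| \leq 1/v$. The Frobenius bound \eqref{upper_GHS} then follows from the diagonal structure by summing $|G_{ii}|^2 \leq 1/v^2$ across the $N$ entries.

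The two remaining bounds reduce to a single spectral computation: since $Z$ is real symmetric,
$$
\|G_Z\|_F^{2} = \sum_{i=1}^{N}\frac{1}{|\lambda_i(Z) - z|^{2}} = N\!\int\!\frac{\nu_Z(dx)}{|x-z|^{2}}, \qquad \|G_Z^{2}\|_F^{2} = \sum_{i=1}^{N}\frac{1}{|\lambda_i(Z) - z|^{4}} = N\!\int\!\frac{\nu_Z(dx)}{|x-z|^{4}}.
$$
For \eqref{upper_GZHS-1} I would apply the equality case of \eqref{eq:oper} with $p=0$ (equivalently \eqref{eq:ope0}) to identify the first integral with $\textup{Im}(\widehat{\nu}_Z(z))/v$, and for \eqref{upper_GZHS-2} the inequality \eqref{eq:oper} with $p=2$ to bound the second integral by $\textup{Im}(\widehat{\nu}_Z(z))/v^{3}$. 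Taking expectations then delivers \eqref{upper_GZHS-1} and \eqref{upper_GZHS-2} at once.

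There is no genuine obstacle: the lemma is essentially a dictionary entry translating Frobenius norms of resolvents into imaginary parts of Stieltjes transforms. The only point requiring mild care is in \eqref{upper_G}, where one must remember to extract the factor $1/v$ from the positivity of $\textup{Im}(\E[\widehat{\nu}_Z(z)])$ supplied by \eqref{eq:posIm}, rather than attempting to control the denominator through its real part (which a priori depends on $D_{ii}$ and could be arbitrarily small).
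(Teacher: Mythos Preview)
Your proposal is correct and follows essentially the same route as the paper: for \eqref{upper_G} and \eqref{upper_GHS} the paper also bounds the denominator of $G_{ii}$ by its imaginary part $v+2\b^2\,\E[\textup{Im}\,\widehat{\nu}_Z(z)]\ge v$, and for \eqref{upper_GZHS-1}, \eqref{upper_GZHS-2} it likewise writes $\|G_Z^q\|_F^2=N\int|x-z|^{-2q}\nu_Z(dx)$ and invokes \eqref{eq:oper} with $p=0,2$ before taking expectations.
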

\begin{proof}
We have
\bea
\max_{i\in[N]}|G_{ii}(z)|&=&\max_{i\in[N]}\frac{1}{|D_{ii}-z-2\b^2 \E\[\widehat{\nu}_Z(z)\]|}\nn\\
&\leq& \max_{i\in[N]}\frac{1}{|\textup{Im}(D_{ii}-z-2\b^2 \E\[\widehat{\nu}_Z(z)\])|}\nn\\
&=&\frac{1}{v+2\b^2\E\[\Im \widehat{\nu}_Z(z)\]}\leq \frac1v.
\eea
which proves \eqref{upper_G}. 

The bound \eqref{upper_GHS} follows readily from \eqref{upper_G}.

Now we prove \eqref{upper_GZHS-1}, \eqref{upper_GZHS-2}. Let $q=1,2$. By definition of the empirical measure $\nu_Z$ we have
\been{
\label{eq:GZIm}
\aled{
&\frac{1}{N}\E\[\|G^q_Z(z)\|^2_F\]=\E\[\frac{1}{N}\textup{Tr}((Z-zI_N)^{-q}(Z-\overline{z}I_N)^{-q})\]\\
&=\E\[\(\frac{1}{N}\sum^N_{i=1}\frac{1}{|\l_{i}(Z)-z|^{2q}}\)\]\leq \E\[\frac{1}{N}\sum^N_{i=1}\frac{1}{|\l_{i}(Z)-z|^{2q}}\]\\
&=\E\[\int_{\R}\frac{\nu_Z(dx)}{|x-z|^{2q}}\]\leq\frac{\E\[\textup{Im}(\widehat\nu_Z(z))\]}{v^{2q-1}}.
}
}
Here we applied the Jensen inequality in the first bound, \eqref{eq:oper} in the second one.
\end{proof}

\begin{lemma}\label{lemma:poincare}
It holds
\be\label{eq:poincare}
\Var\[\widehat{\nu}_Z(z)\]\leq \frac{2\b^2\E\[\textup{Im}(\widehat\nu_Z(z))\]}{N^2v^3}\,. 
\ee
\end{lemma}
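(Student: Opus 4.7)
The plan is to apply the Gaussian Poincar\'e inequality to the function $J \mapsto \widehat{\nu}_Z(z) = \frac{1}{N}\Tr G_Z(z)$, viewed as a smooth function of the independent Gaussian entries $\{J_{ij}\}_{i\leq j}$ (with $\Var(J_{ij})=1$ for $i<j$ and $\Var(J_{ii})=2$), and then to reduce the resulting gradient-square to the Frobenius norm of $G_Z^2$ which has already been estimated in \eqref{upper_GZHS-2}.

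First I would compute the partial derivatives. Writing $J=\sum_i J_{ii}e^i_N(e^i_N)^T+\sum_{i<j}J_{ij}(e^i_N(e^j_N)^T+e^j_N(e^i_N)^T)$ and using the identity $\partial_t M^{-1}=-M^{-1}(\partial_t M)M^{-1}$ together with $Z=\tfrac{\b}{\sqrt N}J+D$, cyclicity of the trace yields
\begin{equation*}
\frac{\partial \widehat{\nu}_Z(z)}{\partial J_{ij}}=-\frac{2\b}{N\sqrt N}(G_Z^2(z))_{ij}\quad (i<j),\qquad \frac{\partial \widehat{\nu}_Z(z)}{\partial J_{ii}}=-\frac{\b}{N\sqrt N}(G_Z^2(z))_{ii}.
\end{equation*}
The Gaussian Poincar\'e inequality (applied separately to real and imaginary parts, or directly in complex form using that $|\partial f|^2=(\partial \Re f)^2+(\partial \Im f)^2$) then gives
\begin{equation*}
\Var[\widehat{\nu}_Z(z)]\leq \sum_{i<j}\E\Bigl[\Bigl|\tfrac{\partial \widehat{\nu}_Z}{\partial J_{ij}}\Bigr|^2\Bigr]+2\sum_i \E\Bigl[\Bigl|\tfrac{\partial \widehat{\nu}_Z}{\partial J_{ii}}\Bigr|^2\Bigr].
\end{equation*}

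Next I would combine the off-diagonal and diagonal contributions. The off-diagonal terms produce $\tfrac{4\b^2}{N^3}\sum_{i<j}\E[|(G_Z^2)_{ij}|^2]=\tfrac{2\b^2}{N^3}\sum_{i\neq j}\E[|(G_Z^2)_{ij}|^2]$ (using symmetry of $G_Z^2$), and the diagonal terms give $\tfrac{2\b^2}{N^3}\sum_i \E[|(G_Z^2)_{ii}|^2]$. Summing these together yields exactly $\tfrac{2\b^2}{N^3}\E[\|G_Z^2\|_F^2]$. Finally, inserting the bound \eqref{upper_GZHS-2} of Lemma \ref{lemma:verde}, namely $\E[\|G_Z^2\|_F^2]\leq N\E[\Im \widehat{\nu}_Z(z)]/v^3$, gives the claimed estimate \eqref{eq:poincare}.

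There is no substantial obstacle here; the only point requiring care is the combinatorial bookkeeping of the factors coming from the symmetry $J_{ij}=J_{ji}$ and from the doubled variance on the diagonal — these conspire exactly so that the sum of squared derivatives reassembles into the full Frobenius norm $\|G_Z^2\|_F^2$, rather than just its upper-triangular part.
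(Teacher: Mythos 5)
Your proposal is correct and follows essentially the same route as the paper: differentiate $\widehat{\nu}_Z(z)=\frac1N\Tr G_Z(z)$ in the Gaussian entries $J_{ij}$, identify the summed squared derivatives with $\frac{2\b^2}{N^3}\|G_Z^2(z)\|_F^2$, bound this by \eqref{upper_GZHS-2}, and conclude with the Gaussian Poincar\'e inequality. If anything, your bookkeeping via the resolvent identity and the weights $\Var(J_{ij})=1$, $\Var(J_{ii})=2$ is more careful than the paper's shorthand computation, which is written as if $Z$ were diagonal but arrives at the same bound.
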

\begin{proof}
Noting that
$$
\widehat{\nu}_Z(z)=\frac1N\textup{Tr}\(G_Z(z)\),
$$
we define $\nabla \widehat{\nu}_Z(z)$ as a $N\times N$ matrix with elements
\be
(\nabla \widehat{\nu}_Z(z))_{ij}:=\partial_{J_{ij}} \widehat{\nu}_Z(z)=\frac1N\sum_{k=1}^N\partial_{J_{ij}}\frac{1}{Z_{kk}-z}=-\frac{\sqrt2\b}{N^{\frac32}}\sum_{k=1}^N\frac{\d_{ik}\d_{jk}}{(Z_{kk}-z)^2}\,.
\ee
Therefore
\bea
\|(\nabla \widehat{\nu}_Z(z))\|_F^2&=&\sum_{ij}(\nabla \widehat{\nu}_Z(z))_{ij}^2
=\frac{2\b^2}{N^3}\sum_{i,j,h,k=1\ldots N}\frac{\d_{ik}\d_{jk}\d_{ih}\d_{jh}}{(Z_{kk}-z)^2(Z_{hh}-z)^2}\nn\\
&=&\frac{2\b^2}{N^3}\sum_{i,j=1\ldots N}\frac{\d_{ij}}{(Z_{ij}-z)^4}=\frac{2\b^2}{N^3}\|G_Z^2(z)\|_F^2\,. 
\eea
Thus by (\ref{upper_GZHS-2})
\be\label{eq:nabla}
\|(\nabla \widehat{\nu}_Z(z))\|_F^2\leq \frac{2\b^2\E\[\textup{Im}(\widehat\nu_Z(z))\]}{N^2v^3}\,.
\ee
We recover \eqref{eq:poincare} combining the bound (\ref{eq:nabla}) and the Poincaré inequality for the Gaussian measure
$$
\Var\[\widehat{\nu}_Z(z)\]\leq E[\|\nabla \widehat{\nu}_Z(z)\|_F^2].
$$
\end{proof}

\begin{lemma}\label{lem:convergence}
Let $N\in\N$, $\b>0$, $J$ be a $N\times N$ GOE matrix, $D$ any $N\times N$ deterministic matrix and $Z$ as in \eqref{eq:Zf}.
Let (recall \eqref{eq:semicerchio} and \eqref{eq:nu_a})
\been{
\label{eq:concentration0mu}
\mu:=\nu_{D}\boxplus\mu_{{\rm sc},\b}.
}

Assuming that
\be\label{eqNlarge}
N\geq 16\frac{\b^6}{v^6}\(\frac{2\b^2+3v^2}{8\b^2+v^2}\),
\ee
it holds for any $p\geq1$
\been{
\label{eq:Lp_conv}
\(\int_{\R} |\widehat{\mu}(u+iv)-\E[\widehat{\nu}_Z(u+iv)]|^pdu\)^{\frac{1}{p}}\leq \frac{\pi^{\frac1p}2\b^2(2\b^2+v^2)(2\b^2+3v^2)}{Nv^{7-\frac1p}}
}
and
\been{
\sup_{u\in\R}|\widehat{\mu}(u+iv)-\E[\widehat{\nu}_Z(u+iv)]|\leq \frac{2\b^2(2\b^2+v^2)(2\b^2+3v^2)}{Nv^{7}}.
}
\end{lemma}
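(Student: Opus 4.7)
The plan is to derive an approximate self-consistent equation for $f_N(z):=\E[\widehat\nu_Z(z)]$ by the Gaussian cavity method and compare it against the exact fixed-point equation \eqref{eq:stile20} satisfied by $\widehat\mu$. Setting $\tilde G_Z(z):=(zI_N-Z)^{-1}$, so that $\widehat\nu_Z(z)=\tfrac{1}{N}\textup{Tr}\,\tilde G_Z(z)$, the starting point is the diagonal form of the resolvent identity $(z-D_{ii})(\tilde G_Z)_{ii}-\tfrac{\beta}{\sqrt{N}}\sum_j J_{ij}(\tilde G_Z)_{ji}=1$. Applying Stein's lemma (Gaussian integration by parts) to each $\E[J_{ij}(\tilde G_Z)_{ji}]$, using $\partial_{J_{ij}}\tilde G_Z=\tilde G_Z(\partial_{J_{ij}}Z)\tilde G_Z$ and the prescribed variances $\E[J_{ii}^2]=2$, $\E[J_{ij}^2]=1$, produces an exact identity of the shape
\begin{equation*}
\bigl(z-D_{ii}-\beta^2 f_N(z)\bigr)\E[(\tilde G_Z(z))_{ii}]=1+\tfrac{\beta^2}{N}\E[(\tilde G_Z^2(z))_{ii}]+\beta^2\,\Cov\bigl((\tilde G_Z(z))_{ii},\,\widehat\nu_Z(z)\bigr).
\end{equation*}
Dividing by $z-D_{ii}-\beta^2 f_N(z)$, summing over $i\in[N]$ and normalising, one obtains an approximate fixed-point equation
\begin{equation*}
f_N(z)=\int\frac{d\nu_D(x)}{z-x-\beta^2 f_N(z)}+R_N(z),
\end{equation*}
of the same structure as \eqref{eq:stile20}. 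The remainder $R_N(z)$ contains a finite-$N$ Hilbert--Schmidt contribution from $\tfrac{\beta^2}{N}\E[(\tilde G_Z^2)_{ii}]$, handled via Lemma~\ref{lemma:verde} (in particular \eqref{upper_GZHS-2}), together with a covariance contribution $\beta^2\Cov((\tilde G_Z)_{ii},\widehat\nu_Z)$ handled via the Poincar\'e estimate of Lemma~\ref{lemma:poincare}.

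Subtracting the exact equation for $\widehat\mu$ and linearising the difference then yields
\begin{equation*}
(f_N-\widehat\mu)(z)\bigl(1-\beta^2 K(z)\bigr)=R_N(z),\qquad K(z):=\int\frac{d\nu_D(x)}{\bigl(z-x-\beta^2 f_N(z)\bigr)\bigl(z-x-\beta^2\widehat\mu(z)\bigr)}.
\end{equation*}
To control $|K(z)|$, I would apply \eqref{eq:ope3} to $\widehat\nu_D$ at the shifts $z+\beta^2 f_N(z)$ and $z+\beta^2\widehat\mu(z)$ (both of modulus at most $\beta^2/v$ by \eqref{eq:ope1}) and combine with Cauchy--Schwarz. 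This produces an explicit pointwise bound $|\beta^2 K(z)|\le c(\beta)\,v^{-\alpha}$ for a suitable power $\alpha$; the quantitative hypothesis \eqref{eqNlarge} is chosen precisely to ensure $|\beta^2 K(z)|\le \tfrac{1}{2}$, so that $1-\beta^2 K$ is invertible with $|1-\beta^2 K|^{-1}\le 2$. Inversion gives the pointwise bound $|f_N(z)-\widehat\mu(z)|\le 2|R_N(z)|$, and tracking the $v$-powers through \eqref{upper_GZHS-2} and \eqref{eq:poincare} produces the sup-norm bound in \eqref{eq:Lp_conv}.

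For the general $L^p$ bound I would exploit the observation that every term in $2|R_N(u+iv)|$ is bounded by a constant times $\textup{Im}\,f_N(u+iv)$. Integrating in $u\in\R$ and invoking \eqref{eq:opeint} (applied, after Fubini, to $f_N=\E[\widehat\nu_Z]$) gives $\int_{\R}\textup{Im}\,f_N(u+iv)\,du=\pi$, hence $\int_{\R}|f_N-\widehat\mu|\,du\le \pi\,c'(\beta)/(Nv^{6})$. Interpolating between the sup bound and the $L^1$ bound via
\begin{equation*}
\Bigl(\int_{\R}|f_N-\widehat\mu|^p\,du\Bigr)^{1/p}\le\bigl(\sup_{u\in\R}|f_N-\widehat\mu|\bigr)^{1-1/p}\Bigl(\int_{\R}|f_N-\widehat\mu|\,du\Bigr)^{1/p}
\end{equation*}
then produces both the prefactor $\pi^{1/p}$ and the power $v^{7-1/p}$ that appear in \eqref{eq:Lp_conv}.

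The main obstacle is the contractivity argument: ensuring that $1-\beta^2 K(z)$ remains uniformly bounded away from zero by carefully tracking $v$-dependences is precisely the role of \eqref{eqNlarge}, and the explicit polynomial prefactor $2\b^2(2\b^2+v^2)(2\b^2+3v^2)$ in \eqref{eq:Lp_conv} emerges from the combinatorial bookkeeping when the three inequalities of Lemmas~\ref{lemma:verde}--\ref{lemma:poincare} are composed.
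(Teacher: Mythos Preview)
Your approach is essentially the same as the paper's: derive an approximate self-consistent equation for $f_N:=\E[\widehat\nu_Z]$ via Gaussian integration by parts (the paper quotes this from \cite{Mon1}), compare against the exact equation for $\widehat\mu$, and close by a contraction argument, with the $L^p$ bound obtained by interpolating between $L^\infty$ and $L^1$ using \eqref{eq:opeint}. Two points, however, are not right in your version and would block the proof as written.

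\emph{Normalisation.} With the GOE convention $\E[J_{ii}^2]=2$, $\E[J_{ij}^2]=1$, the Stein computation produces $2\beta^2$, not $\beta^2$, in front of $f_N$ and $K$; this is why the fixed-point equation \eqref{eq:stile20} carries a $2\beta^2$. This matters for the next point.

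\emph{The contraction step.} Your claim that \eqref{eqNlarge} forces $|\,2\beta^2 K(z)\,|\le\tfrac12$ (and hence $|1-2\beta^2 K|^{-1}\le 2$) is not correct and misidentifies the role of \eqref{eqNlarge}. Applying Cauchy--Schwarz and \eqref{eq:ope3} exactly as you propose gives, for the factor with the shift $2\beta^2\widehat\mu$, the $N$-independent bound $\int |x-z-2\beta^2\widehat\mu|^{-2}\,\nu_D(dx)\le (2\beta^2+v^2)^{-1}$, so the ``bare'' contraction factor is $\tfrac{2\beta^2}{2\beta^2+v^2}$, which is strictly less than $1$ but \emph{arbitrarily close to $1$} for small $v$ and does not go to $0$ with $N$. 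The factor with the shift $2\beta^2 f_N$ is bounded instead by $(2\beta^2+v^2)^{-1}\bigl(1+|R_N|/\textup{Im}\,f_N\bigr)$, and it is only this $N$-dependent correction that \eqref{eqNlarge} controls. The paper carries this through to get a contraction factor $\tfrac12\bigl(1+\tfrac{2\beta^2}{2\beta^2+v^2}\bigr)<1$, whose inverse is $\tfrac{2(2\beta^2+v^2)}{v^2}$; that, and not the constant $2$, is the source of the $(2\beta^2+v^2)$ prefactor in \eqref{eq:Lp_conv}. With $|1-2\beta^2 K|^{-1}\le 2$ you could neither recover the stated constant nor, more seriously, justify invertibility at all, since $|2\beta^2 K|$ may exceed $\tfrac12$.
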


\begin{proof}

We define
\been{\label{eq:def-nuhat}
g(z):=\int_{\R}\frac{\nu_D(dx)}{x-2\b^2\E[\widehat{\nu}_Z(z)]-z}\,
}
and
\been{
\label{eq:nuZnu}
r(z):=\E\[\widehat{\nu}_Z(z)\]-g(z)\,.
}

We split (recall $z=u+iv$) for $p\in[1,\infty]$
\bea
\(\int_{\R}|\widehat{\mu}(z)-\E[\widehat{\nu}_Z(z)]|^pdu\)^{\frac{1}{p}}&\leq&\(\int_{\R}|r(u+iv)|^pdu\)^{\frac{1}{p}}\label{eq:termine1}\\
&+&\(\int_{\R}|\widehat{\mu}(z)-g(z)|^pdu\)^{\frac{1}{p}}.\label{eq:termine2} 
\eea

We bound the term on the r.h.s. of \eqref{eq:termine1}. Following \cite[Lemma 4.6, Equation (4.8)]{Mon1} we write
\been{
r(z)=\frac{2\b^2}{N^2}\textup{Tr}(\E[G_Z(z)^2]G(z))+\frac{2\b^2}{N}\textup{Tr}(\E[\(\widehat{\nu}_Z(z)-\E\[\widehat{\nu}_Z(z)\]\)G_Z(z)G(z))]=:\textup{I}+\textup{II}
}
($\textup{I}$ being the first addendum and $\textup{II}$ the second one). 
It is
\been{
 \label{eq:term1}
 \aled{
| \textup{I}|&\leq \frac{2\b^2}{N^2}\sum^N_{i=1}\sum^N_{j=1}|(G_Z(z))_{ij}|^2|(G(z))_{jj}|\\
&\leq \frac{2\b^2}{N^2}\max_{j\in [N]}|(G(z))_{jj}| \sum^N_{i=1}\sum^N_{j=1}\E[|(G_Z(z))_{ij}|^2]\\
&\leq  \frac{2\b^2}{N^2v}\E\[\|G_Z(z)\|^2_F\]\leq  \frac{2\b^2\E\[\textup{Im}(\widehat\nu_Z(z))\]}{Nv^2}\,, 
}
}
where we used \eqref{upper_G} in the penultimate inequality and \eqref{upper_GZHS-1} in the last one. 

Moreover by the Cauchy-Schwarz inequality we have
\been{
\label{eq:term2}
\aled{
 |\textup{II}| &\leq\frac{2\b^2}{N}\sqrt{\E\[|\textup{Tr}\(G_Z(z)G(z)\)|^2\]}\sqrt{\E\[|\widehat{\nu}_Z(z)-\E\[\widehat{\nu}_Z(z)\]|^2\]}\\
&\leq \frac{2\b^2}{N}\|G(z)\|_F\sqrt{\E\[ \|G_Z(z)\|^2_F\]}\sqrt{\Var\[\widehat{\nu}_Z(z)\]}\\
&= \frac{2\b^2}{N^{\frac{5}{2}}}\|G(z)\|_F\sqrt{\E\[ \|G_Z(z)\|^2_F\]}\sqrt{N^3\Var\[\widehat{\nu}_Z(z)\]}\\
&\leq \frac{2\b^2}{2N^2v}\(\E\[ \|G_Z(z)\|^2_F\]+N^3\Var\[\widehat{\nu}_Z(z)\]\)\\
&\leq \frac{2\b^2}{2Nv^2}\(1+\frac{2\b^2}{v^2}\),
}
}
where we the third inequality is (\ref{upper_GHS}) and in the last step we used the bounds (\ref{upper_GZHS-1}) and (\ref{eq:poincare}). Combining \eqref{eq:term1} and \eqref{eq:term2} we get
\be\label{eq:Brr}
|r(z)|\leq\frac{2\b^2}{Nv^2} \(\frac32+\frac{2\b^2}{2v^2}\)\E\[\textup{Im}(\widehat\nu_Z(z))\].
\ee
We can now estimate the r.h.s. of \eqref{eq:termine1} for any $p\geq1$ as
\bea
\(\int |r(u+iv)|^pdu\)^{\frac1p}&\leq& \frac{2\b^2}{Nv^2}\(\frac32+\frac{2\b^2}{2v^2}\)\(\int \E\[\textup{Im}(\widehat\nu_Z(z))\]^pdu\)^{\frac1p}\nn\\
&\leq& \frac{2\b^2}{Nv^{3-\frac1p}}\(\frac32+\frac{2\b^2}{2v^2}\) \(\int \E\[\textup{Im}(\widehat\nu_Z(z))\]du\)^{\frac1p}\nn\\
&\leq & \frac{\pi^{\frac1p}2\b^2}{Nv^{3-\frac1p}}\(\frac32+\frac{2\b^2}{2v^2}\).\label{eq:bound-r-p}
\eea
where we used the H\"older inequality and the bounds (\ref{eq:ope1}), (\ref{eq:opeint}). The estimate (\ref{eq:bound-r-p}) holds true also at $p=\infty$, as it can be seen by plugging the bound (\ref{eq:ope1}) into \eqref{eq:term1} and \eqref{eq:term2}:
\be\label{eq:bound-r-infty}
\sup_{u\in\R}|r(u+iv)|\leq \frac{2\b^2}{Nv^{3}}\(\frac32+\frac{2\b^2}{2v^2}\).
\ee

Now we study the term in (\ref{eq:termine2}). Bearing in mind the definition (\ref{eq:def-nuhat}) and that
\be\label{eq:selfish}
\widehat\mu= \int\frac{\nu_D(dx)}{x-2\b^2\widehat\mu(z)-z}
\ee
we write
\been{
\label{eq:munu}
\aled{
&|\widehat{\mu}(z)-g(z)|\leq  \int_\R\left|\frac{1}{x-2\b^2\E[\widehat{\nu}_Z(z)]-z}-\frac{1}{x-2\b^2\widehat{\mu}(z)-z}\right|\nu_D(dx)\\
& =2\b^2|\widehat{\mu}(z)-\E[\widehat{\nu}_Z(z)]|\int_{\R}\frac{\nu_D(dx)}{|x-2\b^2\E[\widehat{\nu}_Z(z)]-z||x-2\b^2\widehat{\mu}(z)-z|}\\
&\leq 2\b^2|\widehat{\mu}(z)-\E[\widehat{\nu}_Z(z)]|\sqrt{\int_{\R}\frac{\nu_D(dx)}{|x-2\b^2\E[\widehat{\nu}_Z(z)]-z|^2}}\sqrt{\int_{\R}\frac{\nu_D(dx)}{|x-2\b^2\widehat{\mu}_D(z)-z|^2}}
}
}
by the Cauchy-Schwarz inequality.

By \eqref{eq:ope2} it is $|\widehat{\mu}(z)|\leq v^{-1}$. Thus the \eqref{eq:ope3} applies with $c=1$ and we have
\been{\label{eq:lassi}
\int_{\R}\frac{\nu_D(dx)}{|x-2\b^2\widehat{\mu}(z)-z|^2}\leq \frac{1}{2\b^2+v^2}\(\frac{\textup{Im}\(\widehat{\nu}_D(z+\widehat{\mu}(z))\)}{\textup{Im}\(\widehat{\mu}(z)\)}\)=\frac{1}{2\b^2+v^2}.
}
The last identity follows from \eqref{eq:selfish}. Similarly we have
\been{
\label{eq:lassa}
\aled{
\int_{\R}\frac{\nu_D(dx)}{|x-2\b^2\E[\widehat{\nu}_Z(z)]-z|^2}&\leq
\frac{1}{2\b^2+v^2}\(\frac{\textup{Im}\(\widehat\nu_D(z+\E[\widehat{\nu}_Z(z)])\)}{\textup{Im}(\E[\widehat{\nu}_Z(z)])}\)\\
&=\frac{1}{2\b^2+v^2}\(\frac{\textup{Im}\(g(z)\)}{\textup{Im}(\E[\widehat{\nu}_Z(z)])}\)\\
&\leq \frac{1}{2\b^2+v^2}\(1+\frac{|r(z)|}{\textup{Im}(\E[\widehat{\nu}_Z(z)])}\)\\
&\leq \frac{1}{2\b^2+v^2}\(1+\frac{2\b^2}{Nv^2} \(\frac32+\frac{2\b^2}{2v^2}\)\)
}
}
where we used (\ref{eq:Brr}) in the last bound. Combining (\ref{eq:munu}), (\ref{eq:lassi}) and (\ref{eq:lassa}) 
we get
\be
|\widehat{\mu}(z)-g(z)|\leq \(\frac{2\b^2}{2\b^2+v^2}\sqrt{1+\frac{2\b^2}{Nv^2} \(\frac32+\frac{2\b^2}{2v^2}\)}\)|\widehat{\mu}(z)-\E[\widehat{\nu}_Z(z)]|\,.
\ee
A direct computation shows that if (\ref{eqNlarge}) holds then
$$
\(\frac{2\b^2}{2\b^2+v^2}\sqrt{1+\frac{2\b^2}{Nv^2} \(\frac32+\frac{2\b^2}{2v^2}\)}\)\leq \frac12\(1+\frac{2\b^2}{v^2+2\b^2}\).
$$
Hence
\be
\(\int_\R|\widehat{\mu}(u+iv)-g(u+iv)|^pdu\)^{\frac1p}\leq \frac12\(1+\frac{2\b^2}{v^2+2\b^2}\) \(\int_\R|\widehat{\mu}(u+iv)-\E[\widehat{\nu}_Z(u+iv)]|^pdu\)^{\frac1p}\,. 
\ee
Therefore by (\ref{eq:termine1}), (\ref{eq:termine2}) we have for $p\in[1,\infty]$
\be\label{eq:fineprova}
\(\int_\R|\widehat{\mu}(u+iv)-\E[\widehat{\nu}_Z(u+iv)]|^pdu\)^{\frac1p}\leq \frac{2(2\b^2+v^2)}{v^2}\(\int_{\R}|r(u+iv)|^pdu\)^{\frac{1}{p}}\,,
\ee
and by (\ref{eq:bound-r-p}) (for $p\in[1,\infty)$) and (\ref{eq:bound-r-infty}) (for $p=\infty$) the assertion is proven. 
\end{proof}

We close with the following property of the additive convolution with the semi-circle law. While this is a direct upshot of the analysis of \cite{biane}, it is worth to single it out for the sequel. 
\begin{lemma}
\label{lem:p_boundness}
Let $\nu$ be any probability measure on $\R$ for which the additive convolution $\mu=\nu\boxplus\mu_{{\rm sc},\b}$ is well defined. Then $\mu$ has a density function $p:\R\to [0,\infty)$ that is analytic in  $\{x\in  \R\,:\,p(x)>0\}$ and verifies
\been{\label{eq:unifBOund}
\sup_{x\in\R}p(x)\leq \frac{1}{\pi\sqrt2\b}.
}
\end{lemma}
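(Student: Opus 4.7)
The plan is to reduce to the analysis carried out in \cite{biane}, which already establishes, for any probability measure $\nu$ for which $\mu = \nu \boxplus \mu_{{\rm sc},\beta}$ is defined, both the existence of a density $p$ for $\mu$ and its analyticity on the open set $\{p>0\}$. Both statements follow from the self-consistent equation \eqref{eq:stile20} via an implicit-function argument applied on $\C^+$, using that $\widehat\nu$ is analytic there and that $\textup{Im}(\widehat\mu(u+iv))$ admits a continuous (resp. analytic) limit as $v\to 0^+$ at each point $u$ where that limit is positive (resp. strictly positive). I would either cite \cite{biane} for these regularity facts or, for a self-contained presentation, supply the implicit-function argument.

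For the sharp uniform bound I would argue directly from \eqref{eq:stile20}. Setting $w(z) := \sqrt{2}\,z - 2\beta^2 \widehat\mu(z)$, \eqref{eq:stile20} reads $\widehat\mu(z) = \widehat\nu(w(z))$. Taking imaginary parts and using \eqref{eq:ope000} gives the identity $\textup{Im}(\widehat\mu(z)) = \textup{Im}(w(z))\,K(z)$, where $K(z) := \int \nu(dx)/|x - w(z)|^2$. Combined with the linear identity $\textup{Im}(w(z)) = \sqrt{2}\,v - 2\beta^2 \textup{Im}(\widehat\mu(z))$ implicit in the definition of $w$, this determines $\textup{Im}(w(z))$ and $K(z)$ as explicit functions of $v$ and $\textup{Im}(\widehat\mu(z))$.

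At a point $u$ with $p(u)>0$, Biane's regularity allows the passage to the limit $v\to 0^+$: $\textup{Im}(\widehat\mu(u+iv)) \to \pi p(u) > 0$, $w(u+iv) \to w^*(u)$, $K(u+iv) \to K^*(u)$, and the self-consistent equation in the limit pins down $K^*(u)=1/(2\beta^2)$ and $\textup{Im}(w^*(u)) = 2\beta^2 \pi p(u)$. The pointwise estimate $|x-w^*(u)|^2 \geq (\textup{Im}(w^*(u)))^2$ then yields
\[
\frac{1}{2\beta^2} \;=\; K^*(u) \;\leq\; \frac{1}{(\textup{Im}(w^*(u)))^2} \;=\; \frac{1}{(2\beta^2 \pi p(u))^2},
\]
which rearranges to $p(u) \leq 1/(\pi\sqrt{2}\beta)$. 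The bound is trivial at points where $p(u)=0$.

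The main obstacle is justifying the boundary limit $v\to 0^+$ and identifying $K^*(u) = 1/(2\beta^2)$: these are exactly the outputs of Biane's analysis, which is why the result is, as remarked in the statement, a \emph{direct upshot of the analysis of \cite{biane}}. Once those regularity statements are in hand, the explicit bound reduces to the one-line AM--GM--type estimate above.
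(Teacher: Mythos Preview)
Your proposal is correct and is essentially the same argument as the paper's, just with Biane's construction unpacked. The paper introduces Biane's function
\[
\upsilon_\b(u):=\inf\Big\{v>0:\int\frac{\nu(dx)}{(u-x)^2+v^2}\le\frac{1}{2\b^2}\Big\},
\]
cites \cite[Corollary~3]{biane} for $p\le \upsilon_\b/(2\pi\b^2)$ (after Biane's change of variable), and concludes via the trivial bound $\upsilon_\b\le\sqrt{2}\b$. Your quantities coincide with these: $\textup{Im}(w^*(u))$ is precisely $\upsilon_\b$ at the relevant point, your identification $K^*=1/(2\b^2)$ is the defining equation of $\upsilon_\b$ when the infimum is positive, the relation $\textup{Im}(w^*)=2\b^2\pi p$ is Biane's density formula, and your final estimate $K^*\le 1/(\textup{Im}\,w^*)^2$ is exactly the paper's trivial bound rewritten. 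The only difference is that the paper quotes Biane as a black box while you re-derive the needed identities from the self-consistent equation \eqref{eq:stile20}; both routes ultimately lean on \cite{biane} for the boundary regularity that justifies the limit $v\to0^+$.
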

\begin{proof}
The fact that the measure $\mu$ has a density $p$ that is analytic on the set $\{x\in  \R\,:\,p(x)>0\}$ is well known after \cite[Corollary $3$ and $4$]{biane}. The point here is to show the upper bound (\ref{eq:unifBOund}), which is independent on $\nu$.

Following \cite{biane} we introduce the function
\been{
\upsilon_\b(u):=\inf \left\{v>0\,:\,\int_\R\frac{\nu(dx)}{(u-x)^2+v^2}\leq\frac{1}{2\b^2}\right\}.
}
From \cite[Corollary $3$]{biane} it follows that
\been{
\sup_{x\in \R}p(x)\leq\sup_{x\in \R}\frac{\upsilon_\b(x)}{\pi 2\b^2}.
}
Since clearly $\upsilon_\b(u)\leq \b$ we recover the assertion. 
\end{proof}


\section{Proof of Theorem \ref{thm:GOE_plus_D_det} without the lower rank term}\label{section:RegDet}

In this section we prove Theorem \ref{thm:GOE_plus_D_det} assuming $p=0$, that is that the lower rank term $A$ vanishes. 

For any symmetric matrix $M\in\R^{N\times N}$ we shorten
\been{
\label{eq:LA}
L(M):=\frac{1}{N}\log(|\det(M)|).
}
Next we prove the formula \eqref{eq:GOE_plus_D_det} for the regularised determinant $L(Z+i\e):= L(Z+i\e I_N)$.

\begin{proposition}
\label{lem:concentration0}
Let $N\in\N$ (large enough) and $\e\in[N^{-\frac{1}{10}},N^{-1/12}]$. Let also $\b>0$, $J$ be a $N\times N$ GOE matrix, $D$ a $N\times N$ deterministic matrix and $Z$ as in \eqref{eq:Zf}.
Let (recall \eqref{eq:semicerchio} and \eqref{eq:nu_a})
\been{
\label{eq:concentration0mu}
\mu:=\nu_{D}\boxplus\mu_{{\rm sc},\b}.
}
Assume that for some $\d\in (0,1]$
\been{
\|D\|_{\textup{op}}\leq e^{N^{1-\d}}.
}
Then for any $r>0$ it holds
\been{
\label{eq:concentration20}
\left|\frac{1}{Nr}\log\(\E\[e^{rNL(Z+i \e)}\]\)-\int\log|x|\mu(dx)\right|\leq\frac{c}{N^{\min\(\frac{1}{24},\d\)}}
}
\end{proposition}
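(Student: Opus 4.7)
The plan is to first use Gaussian concentration to reduce the Laplace transform in \eqref{eq:concentration20} to the mean $\E L(Z+i\e)$, and then to approximate this mean by $\int\log|x|\mu(dx)$ via the integral representation of the logarithm in terms of Stieltjes transforms.

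For the concentration step, $J\mapsto\log|\det(Z+i\e)|$ is smooth with $\partial_{J_{ij}}\log\det(Z+i\e)=\Tr\bigl((Z+i\e)^{-1}\partial_{J_{ij}}Z\bigr)$. The identity $\|(Z+i\e)^{-1}\|_F^2=\Tr\bigl((Z^2+\e^2I_N)^{-1}\bigr)\le N/\e^2$ yields the variance-weighted gradient bound $\sum_{i\le j}\E[J_{ij}^2]\,|\partial_{J_{ij}}\log|\det(Z+i\e)||^2\le C\b^2/\e^2$, uniformly in the realisation of $J$. The Gaussian log-Sobolev inequality then implies $\log\E[e^{r(NL-\E NL)}]\le Cr^2\b^2/\e^2$, while Jensen gives the reverse direction, so
\[
\Bigl|\frac{1}{Nr}\log\E[e^{rNL(Z+i\e)}]-\E L(Z+i\e)\Bigr|\le\frac{Cr\b^2}{N\e^2},
\]
which is $O(rN^{-5/6})$ uniformly for $\e\in[N^{-1/10},N^{-1/12}]$. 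It thus suffices to estimate $|\E L(Z+i\e)-\int\log|x|\mu(dx)|$.

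To this end, the identity $\int\log|x-i\e|\nu(dx)=\int\log|x-iT|\nu(dx)-\int_\e^T\textup{Im}(\widehat\nu(is))\,ds$ (from differentiating in $y$), applied under expectation to $\nu=\nu_Z$ and in the limit $\e\to 0$ to $\mu$ (legal since by Lemma~\ref{lem:p_boundness} the measure $\mu$ has bounded density and hence integrable logarithmic singularity), yields the decomposition
\[
\E L(Z+i\e)-\int\log|x|\mu(dx)=\mathrm{I}+\mathrm{II}+\mathrm{III},
\]
with $\mathrm{I}=\E\int\log|x-iT|\nu_Z(dx)-\int\log|x-iT|\mu(dx)$ a tail term, $\mathrm{II}=\int_0^\e\textup{Im}(\widehat\mu(is))\,ds$ a boundary term, and $\mathrm{III}=\int_\e^T[\textup{Im}(\widehat\mu(is))-\E\textup{Im}(\widehat{\nu_Z}(is))]\,ds$ the bulk. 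These are controlled as follows: $|\mathrm{I}|\le C(\b^2+\|D\|_{\textup{op}}^2)/T^2$ from $\log|x-iT|\le\log T+x^2/(2T^2)$ together with the second-moment computations $\E\int x^2\nu_Z(dx)=\b^2(1+N^{-1})+N^{-1}\Tr D^2$ and $\int x^2\mu(dx)=N^{-1}\Tr D^2+\b^2$; $|\mathrm{II}|\le C\e/\b$ from Lemma~\ref{lem:p_boundness}, which gives $\textup{Im}(\widehat\mu(is))\le 1/(\sqrt 2\,\b)$; and $|\mathrm{III}|\le c(\b)/(N\e^6)$ from Lemma~\ref{lem:convergence} (whose hypothesis \eqref{eqNlarge} holds on $[\e,T]$ when $N$ is large, since $\e\ge N^{-1/10}$), which yields $|\textup{Im}(\widehat\mu(is))-\E\textup{Im}(\widehat{\nu_Z}(is))|\le c(\b)/(Ns^7)$.

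Summing the concentration error together with $\mathrm{I},\mathrm{II},\mathrm{III}$ and choosing $T$ as a polynomial in $N(1+\|D\|_{\textup{op}})$ produces the claimed bound, with the exponent $\min(1/24,\d)$ coming from the trade-off between the four error sources. The main technical hurdle I anticipate is handling the extreme range $\|D\|_{\textup{op}}\le e^{N^{1-\d}}$: the cutoff $T$ must scale with $\|D\|_{\textup{op}}$ to render $\mathrm{I}$ subleading, and tracking this dependence carefully is what produces the $\d$-term in the exponent; the $1/24$ then emerges from optimising $\e$ in the admissible range while keeping both Lemma~\ref{lem:convergence} and the concentration bound simultaneously effective.
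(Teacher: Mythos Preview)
Your concentration step is essentially the same as the paper's (which invokes the Lipschitz concentration inequality of \cite[Theorem~2.3.5]{AGZ} rather than Herbst directly; the two are equivalent here). A minor slip: with $\e\ge N^{-1/10}$ one gets $Cr\b^2/(N\e^2)\le Cr\b^2 N^{-4/5}$, not $N^{-5/6}$; this is harmless for the conclusion.

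The real difference is in how you compare $\E L(Z+i\e)$ with $\int\log|x|\mu(dx)$. The paper splits this as
\[
\bigl|\E L(Z+i\e)-\textstyle\int\log|x+i\e|\mu(dx)\bigr|+\bigl|\textstyle\int\log|x+i\e|\mu(dx)-\int\log|x|\mu(dx)\bigr|,
\]
bounds the first piece by quoting \cite[Lemma~4.5]{Mon1} as a black box (this produces a $\log(1+\|D\|_{\rm op})/N\le N^{-\d}$ term, which is precisely where the $\d$ enters), and bounds the second piece by $C\sqrt{\e}\le CN^{-1/24}$ via a split at $|x|=\sqrt{\e}$ together with Lemma~\ref{lem:p_boundness}. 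Your integral representation $\log|x-i\e|=\log|x-iT|-\int_\e^T\textup{Im}((x-is)^{-1})\,ds$ combined with Lemma~\ref{lem:convergence} is a genuinely different and more self-contained route: it avoids the external lemma from \cite{Mon1} entirely and, because the density bound gives $|\mathrm{II}|\le C\e/\b$ rather than $C\sqrt{\e}$, it actually yields the sharper rate $c(\b)N^{-1/12}$.

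Your anticipated ``main technical hurdle'' does not arise in your own argument. Since the $\log T$ contributions cancel exactly in $\mathrm{I}$ (both $\nu_Z$ and $\mu$ are probability measures), term $\mathrm{I}$ is bounded by $C(\b^2+\|D\|_{\rm op}^2)/T^2$ with no logarithmic loss, so you may take $T$ as large as you like (even $T\to\infty$) at no cost; there is therefore no $\d$-dependence in your bound at all. The $\d$ in the statement is an artefact of the paper's reliance on \cite[Lemma~4.5]{Mon1}, and your approach proves the stronger inequality $\bigl|\frac{1}{Nr}\log\E[e^{rNL(Z+i\e)}]-\int\log|x|\mu(dx)\bigr|\le c(\b,r)N^{-1/12}$, from which \eqref{eq:concentration20} follows a fortiori.
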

\begin{proof}
The triangular inequality yields
\bea
&&\left|\frac{1}{N r}\log\(\mathbb{E}[e^{rN L(Z+i \e)}]\)- \int\log|x|\mu(dx)\right|\label{eq:montanari0}\\
&\leq& \left|\frac{1}{N r}\log\(\mathbb{E}[e^{rN L(Z+i \e)}]\)-\frac{1}{N} \mathbb{E}[L(Z+i \e)]\right|\label{eq:montanari00}\\
&+&\left|\frac{1}{N}\mathbb{E}[L(Z+i \e)]- \int\log|x+i\e|\mu(dx)\right|\label{eq:montanari01}\\
&+&\left|\int\log|x+i\e|\mu(dx)- \int\log|x|\mu(dx)\right|.\label{eq:montanari02}
\eea
For any $a>0$ the function $\log|a x+i\e|$ is Lipschitz with constant bounded by $a/\e$. Therefore \cite[Theorem 2.3.5]{AGZ} gives
\be\label{eq:concentration30}
\P\left(|L(Z+i \e)-\mathbb{E}[L(Z+i \e)]|\geq t\right)\leq 2e^{-\frac{\e^2}{8\b^2 }N^2t^2}\,. 
\ee
Hence, for $\e>0$,
\been{\aled{
\mathbb{E}[e^{rN (L(Z+i \e)-\mathbb{E}[L(Z+i \e)])}]&\leq \mathbb{E}[e^{rN |L(Z+i \e)-\mathbb{E}[L(Z+i \e)]|}]\\&=rN\int_0^\infty  e^{rtN}\P\left(\left|  L(Z+i \e)-\mathbb{E}[L(Z+i \e)]\right| \geq t\right)dt\\
&\leq rN\int_{-\infty}^\infty e^{rNt-\frac{\e^2}{8\b^2 }N^2t^2}dt \leq 2\sqrt{2\pi}\b \e^{-1}\,e^{2r^2\b^2 \e^{-2}}.
}}
Moreover by the Jensen inequality $\mathbb{E}[e^{rN (L(Z+i \e)}]\geq e^{rN\mathbb{E}[L(Z+i \e)]}$.
Therefore
\be\label{eq:concentation40}
\aled{
\eqref{eq:montanari00}&=\left|\frac{1}{N r}\log\(\mathbb{E}[e^{rN L(Z+i \e)}]\)-\frac{1}{N} \mathbb{E}[L(Z+i \e)]\right|\\
&\leq \left|\frac{1}{N r}\log\(\mathbb{E}[e^{rN (L(Z+i \e)-\mathbb{E}[L(Z+i \e)])}]\)\right|\\
&\leq  \left|\frac{2r\b^2}{N \e^2} +\frac{1}{2Nr}\log\(\frac{8\pi\b^2 }{\e^2}\)\right|\\
&\leq \frac{4r\b^2}{N\e^2}\lesssim \frac{r\b^2}{N^{\frac45}},
}
\ee
where the penultimate inequality holds for $\e$ small enough and in the last one we used $\e\geq N^{-1/10}$. 

We now estimate \eqref{eq:montanari01}. By \cite[Lemma 4.5]{Mon1}
\be\label{eq:montanari000}
\left|\mathbb{E}[L\(Z+i \e\)]- \int\log|x+i\e|\mu(dx)\right|\leq\frac{\log N}{N}+\frac{C}{N}\left(\frac{1}{\e^5}+\log(1+\|D\|_{\rm op})\right)\,. 
\ee
Since $\|D\|_{\rm op}\leq e^{N^{1-\d}}$ and again $\e\geq N^{-1/10}$, for $N$ large enough it holds
\be\label{eq:montanari0000}
\eqref{eq:montanari01}\leq\frac{\log N}{N}+\frac{C}{N\e^5}+\frac{C}{N^{\d}}\lesssim \frac{1}{N^{\min(\d,\frac12)}}.
\ee

It remains to deal with \eqref{eq:montanari02}. 
Using that $\log|x|\leq\log|x+i\e|\leq\log|x|+\frac\e{|x|}$, for $\e<1$ we get
\been{
\aled{
&|\log|x+i\e|-\log|x||=\log(|x+i\e|)-\log|x|\\
&\leq \frac{1}{2}\log(4\e)\bm{1}_{\{|x|\leq \sqrt{\e}\}}+\(\log|x|+\frac{\e}{|x|}\)\bm{1}_{\{|x|> \sqrt{\e}\}}-\log|x|\\
&\leq \frac{1}{2}\log(4\e)\bm{1}_{\{|x|\leq \sqrt{\e}\}}+\(\log|x|+\sqrt{\e}\)\bm{1}_{\{|x|> \sqrt{\e}\}}-\log|x|\\
&\leq \(\frac{1}{2}\log(4\e)-\log|x|\)\bm{1}_{\{|x|\leq \sqrt{\e}\}}+\sqrt{\e}.
}
}
Hence
\be\label{eq:upper_logD0}
\aled{
\eqref{eq:montanari02}&=\left|\int\log|x+i\e|\mu(dx)-\int\log|x|\mu(dx)\right|\\
&\leq \int^{\sqrt{\e}}_{-\sqrt{\e}}\(\frac{1}{2}\log(4\e)-\log|x|\)\mu(dx)+\sqrt{\e}\\
&\leq\frac{1}{\pi \s}\int^{\sqrt{\e}}_{-\sqrt{\e}}\(\frac{1}{2}\log(4\e)-\log|x|\)dx+\sqrt{\e}\\
&=\frac{1}{\pi \s}\(\sqrt{\e}\log(4\e)-2\sqrt{\e}\log(\sqrt{\e})+2\sqrt{\e}\)+\sqrt{\e}\lesssim \sqrt{\e},
}
\ee
where in the second inequality we used Lemma \ref{lem:p_boundness}. 

Combining the bounds \eqref{eq:concentation40}, \eqref{eq:montanari0000} and \eqref{eq:upper_logD0} we obtain
\be
\eqref{eq:montanari0}\lesssim \frac{r\b^2}{N^{\frac45}}+\frac{1}{N^{\min(\d,\frac12)}}+\sqrt\e\lesssim \frac{1}{N^{\min(\d,1/24)}}
\ee
since $\e\leq N^{-1/12}$.
\end{proof}

Ideally, to prove the Theorem \ref{thm:GOE_plus_D_det} we wish to replace $L(Z+i\e)$ with $L(Z)$ in formula \eqref{eq:concentration20}. However, taking $\e\to0$ requires some additional care. 

Recall (see \eqref{eq:lleast}) that $\l_{\min}:=\l_{\min}(Z)$ denotes the least eigenvalue of $Z$. We have 
\begin{lemma}
\label{lem:lminP}
Let $Z$ be a $N\times N$ symmetric matrices defined as in  Lemma \ref{lem:convergence}. Then
\been{
\label{eq:lminP}
\P\(\l_{\min}\leq  \frac{1}{t}\)\leq \frac{CN^2}{\b\,t}\,,
 }
for some universal constant $C>0$ independent on $N$.
\end{lemma}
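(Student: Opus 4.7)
My plan is to identify a single Gaussian direction inside $J$ which acts on $Z = \frac{\beta}{\sqrt N}J + D$ as a pure scalar shift of every eigenvalue, and then exploit the explicit density of this one-dimensional direction to bound the pointwise density of each individual $\l_j(Z)$. A union bound over the $N$ eigenvalues will then deliver the claim.

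The natural candidate is the diagonal average $y := \frac{1}{N}\sum_{i=1}^N J_{ii}$. Since the $J_{ii}$ are i.i.d. $N(0,2)$, one has $y \sim N(0, 2/N)$, and a direct covariance computation (equivalently, the orthogonal decomposition of an i.i.d. Gaussian vector into its mean and its traceless part) shows that $y$ is independent of $J - y I$. Writing
\bes{
Z = \frac{\beta y}{\sqrt N}\, I + Z', \qquad Z' := \frac{\beta}{\sqrt N}(J - yI) + D,
}
the matrix $Z'$ is therefore independent of $y$, so conditionally on $Z'$ each eigenvalue $\l_j(Z) = \l_j(Z') + \frac{\beta y}{\sqrt N}$ is an affine image of $y$, hence Gaussian with standard deviation $\beta\sqrt 2/N$.

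The maximum value of a Gaussian density with standard deviation $\beta\sqrt 2/N$ is $N/(2\sqrt \pi \beta)$. Consequently, conditioning on $Z'$ and applying Fubini yields $\P(|\l_j(Z)| \leq 1/t) \leq \frac{2}{t}\cdot \frac{N}{2\sqrt \pi \beta}$ for every $j \in [N]$, and the union bound $\{\l_{\min} \leq 1/t\} = \bigcup_{j \in [N]} \{|\l_j(Z)| \leq 1/t\}$ gives $\P(\l_{\min} \leq 1/t) \leq \frac{N^2}{\sqrt \pi \beta t}$, which is precisely the claimed estimate with $C = 1/\sqrt \pi$. I do not anticipate any truly delicate step; the only observation requiring a small verification is the independence $y \perp Z'$, which follows immediately from a covariance calculation combined with joint Gaussianity. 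The point to appreciate is that although each $\l_j$ depends on $J$ in a highly non-linear way, the presence of the Gaussian scalar $y$ in the entries of $J$ already supplies, deterministically, one full Gaussian dimension of motion to every eigenvalue, and that is all one needs for an anti-concentration bound of the required strength.
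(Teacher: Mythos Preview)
Your argument is correct. The decomposition $J=yI+(J-yI)$ with $y=\frac{1}{N}\sum_i J_{ii}$ is valid; the independence of $y$ and $J-yI$ follows exactly as you indicate, since the off-diagonal entries of $J$ are independent of $y$ by construction and, for the diagonal, $\Cov(y,J_{ii}-y)=\frac{2}{N}-\frac{2}{N}=0$ together with joint Gaussianity gives independence. From there the conditional distribution of each $\l_j(Z)$ given $Z'$ is Gaussian with standard deviation $\b\sqrt{2}/N$, the density bound and the union bound are routine, and you land on $C=1/\sqrt{\pi}$.

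The paper itself does not give a proof here at all: it simply cites \cite[Lemma~6.2]{SST}. Your approach is therefore not merely a variant but a complete, self-contained replacement for an external reference. The trace-shift trick you use is in fact the standard mechanism behind such smallest-singular-value bounds for Gaussian ensembles; it is elementary, requires nothing about $D$ beyond symmetry, and produces an explicit constant. The only cost is that the exponent of $N$ is $2$ rather than the optimal $1$ one could obtain with more work, but the paper's statement asks for $N^2$ anyway, so nothing is lost.
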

\begin{proof}
See \cite[Lemma 6.2]{SST}.
\end{proof}
In the next proposition following the approach of \cite{ben} we control the difference between $|\det(Z)|$ and its regularisation at an exponential scale. 
\begin{proposition}\label{lemma:panini0}
Let $N\in\N$ (large enough) and $\e\in [N^{-\frac{1}{10}},N^{-\frac{1}{12}}]$. Let also $\b>0$, $J$ be a $N\times N$ GOE matrix, $D$ any $N\times N$ deterministic matrix and $Z$ as in \eqref{eq:Zf}.
Let (recall \eqref{eq:semicerchio} and \eqref{eq:nu_a})
\been{
\label{eq:concentration0mu}
\mu:=\nu_{D}\boxplus\mu_{{\rm sc},\b}.
}
Assume that for some $\d\in (0,1]$
\been{
\|D\|_{\textup{op}}\leq e^{N^{1-\d}}.
}
Then there exists a constant $c:=c(\b)>0$ such that
\be\label{eq:Lipdiff0}
\frac1N\log\left(1-\frac{c}{N^{1/60}}\right)-\frac{C}{N^{1/60}}+\frac1N\log\mathbb{E}[e^{N L(Z+i \e)}]\leq\frac1N\log\mathbb{E}[e^{N L(Z)}]\leq \frac1N\log\mathbb{E}[e^{N L(Z+i \e)}].
\ee
\end{proposition}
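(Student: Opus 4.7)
\textbf{Upper bound.} The right-hand inequality in \eqref{eq:Lipdiff0} is immediate: from $|\det Z|^2=\prod_j\lambda_j(Z)^2\leq\prod_j(\lambda_j(Z)^2+\e^2)=|\det(Z+i\e I_N)|^2$ one gets $|\det Z|\leq|\det(Z+i\e I_N)|$ pointwise, hence $\mathbb{E}[e^{NL(Z)}]\leq\mathbb{E}[e^{NL(Z+i\e)}]$.

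\textbf{Lower bound: strategy.} Following \cite{ben}, for a threshold $\tau>0$ to be chosen, introduce the good event $A_\tau:=\{\l_{\min}(Z)\geq\tau\}$. The factorisation $|\det(Z+i\e I_N)|^2=|\det Z|^2\prod_j(1+\e^2/\lambda_j(Z)^2)$ yields, on $A_\tau$,
\begin{equation*}
|\det Z|\geq|\det(Z+i\e I_N)|\,(1+\e^2/\tau^2)^{-N/2}\geq|\det(Z+i\e I_N)|\exp\!\bigl(-N\e^2/(2\tau^2)\bigr),
\end{equation*}
and consequently
\begin{equation*}
\mathbb{E}[|\det Z|]\geq\exp(-N\e^2/(2\tau^2))\Bigl(\mathbb{E}[|\det(Z+i\e I_N)|]-\mathbb{E}[|\det(Z+i\e I_N)|\mathbf{1}_{A_\tau^c}]\Bigr).
\end{equation*}

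\textbf{Bad-event control.} Bound the bad-event contribution by Cauchy--Schwarz, $\mathbb{E}[|\det(Z+i\e I_N)|\mathbf{1}_{A_\tau^c}]\leq\mathbb{E}[|\det(Z+i\e I_N)|^2]^{1/2}\,\P(A_\tau^c)^{1/2}$. Applying Proposition~\ref{lem:concentration0} with $r=1$ and $r=2$ and noting that the $r$-independent bias errors $O(N^{-1/\min(\d,1/24)})$ cancel in the difference (only the $r$-dependent concentration term $O(r\b^2/(N\e^2))$ from \eqref{eq:concentation40} survives) produces
\begin{equation*}
\frac{\mathbb{E}[|\det(Z+i\e I_N)|^2]^{1/2}}{\mathbb{E}[|\det(Z+i\e I_N)|]}\leq e^{c_1 N^{1/5}},
\end{equation*}
while Lemma~\ref{lem:lminP} gives $\P(A_\tau^c)\leq CN^2\tau/\b$. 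Choosing $\tau=\e N^{1/120}$ makes $N\e^2/(2\tau^2)=N^{59/60}/2$, which matches the slack $CN^{-1/60}$ tolerated by \eqref{eq:Lipdiff0} after multiplication by $N$.

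\textbf{Main obstacle.} The delicate point is that at this scale of $\tau$ the naive Cauchy--Schwarz combined with Lemma~\ref{lem:lminP} gives a bad-event ratio of size $e^{c_1N^{1/5}}\cdot N\sqrt{\tau/\b}$, which is not sub-polynomially small and so alone does not close the estimate. The resolution is a multi-scale refinement: one further intersects $A_\tau^c$ with the super-exponentially likely event $B:=\{|L(Z+i\e I_N)-\mathbb{E}\,L(Z+i\e I_N)|\leq N^{-1/60}\}$ obtained from the Gaussian concentration \eqref{eq:concentration30} (whose complement has probability $O(\exp(-cN^{103/60}))$), on which the crude $L^2$-bound is replaced by the sharper pointwise bound $|\det(Z+i\e I_N)|\leq\exp(N\mathbb{E}\,L(Z+i\e I_N)+N^{59/60})$; the contribution of $B^c$ is absorbed using \eqref{eq:concentration30} again against the ratio $e^{c_1 N^{1/5}}$. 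Combining these pieces with the polynomial tail from Lemma~\ref{lem:lminP} and balancing the resulting errors against the target rate $N^{-1/60}$ produces the bound \eqref{eq:Lipdiff0}; the bookkeeping of constants through the Hölder-type inequalities is the principal technical work.
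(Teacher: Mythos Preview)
Your upper bound is correct and identical to the paper's.

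For the lower bound, however, there is a genuine gap. Your good event $A_\tau=\{\l_{\min}(Z)\geq\tau\}$ with $\tau=\e N^{1/120}$ cannot be shown to have large probability from Lemma~\ref{lem:lminP}: that lemma only gives $\P(A_\tau^c)\leq CN^2\tau/\b$, which for your polynomially small $\tau$ exceeds $1$ and is therefore useless. Your proposed fix via the concentration event $B$ does not repair this: on $A_\tau^c\cap B$ you get the ratio bound $e^{N^{59/60}}\P(A_\tau^c)$, but since the only available estimate on $\P(A_\tau^c)$ is the trivial one, this ratio is of order $e^{N^{59/60}}$, not $o(1)$. No rebalancing of $\tau$ saves the single-threshold scheme: if $\tau$ is taken small enough (stretched-exponentially) for Lemma~\ref{lem:lminP} to bite, then on $A_\tau$ your crude bound $N\log(1+\e^2/\tau^2)$ blows up far beyond $N^{59/60}$.

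The missing idea is that one must control not just $\l_{\min}$ but the \emph{number} of eigenvalues in a mesoscopic window around zero. The paper's good event is
\[
W_{N,\e}=\bigl\{\card\{i:|\l_i|\leq\e^2\}\leq N^{19/20}\bigr\}\cap\bigl\{\l_{\min}>e^{-N^{1/30}}\bigr\}.
\]
The second condition uses a stretched-exponential threshold, so its complement is handled by Lemma~\ref{lem:lminP}. The first condition is the new ingredient: using the boundedness of the density of $\mu$ (Lemma~\ref{lem:p_boundness}) together with a quantitative comparison of $\E[\nu_Z]$ to $\mu$ (via Bai's inequality and Lemma~\ref{lem:convergence}), one shows $\E[\card\{i:|\l_i|\leq\e^2\}]\lesssim N\e$, whence Markov gives $\P(W_{N,\e}^c)\lesssim N^{-1/30}$. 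On $W_{N,\e}$ the sum $\sum_i\log(1+\e^2/\l_i^2)$ is then split at the scale $\e^2$: the few small eigenvalues contribute $\lesssim N^{19/20}\cdot N^{1/30}=N^{59/60}$, and the many large ones are harmless. Your Cauchy--Schwarz/concentration machinery for the complement then closes the argument. Without the eigenvalue-counting step the proof cannot be completed.
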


\begin{proof}
Clearly $L(Z)\leq L(Z+i \e)$ for any $\e\geq 0$ gives the upper bound. So we have to prove the lower bound. 
Everywhere in the proof we shorten $\l_i=\l_i(Z)$ and $\l_{\min}:=\l_{\min}(Z)$.
We set 
\be\label{eq:Def-W_qpk0}
W_{N,\e}:=\{\,\card\{i\in[N]\,:\,|\l_i|\leq \e^2\}\leq N^{\frac{19}{20}}\,,\quad \l_{\min}> e^{-N^{1/30}}\,\}.
\ee
We have
\been{\label{eq:banana0}\aled{
\mathbb{E}[e^{N L(Z)}]&=\mathbb{E}[e^{N (L(Z)-L(Z+i \e))}e^{N L(Z+i \e)}]\\
&=\mathbb{E}\[e^{-\frac{1}{2}\sum_{i\in[N]}\log\left(1+\frac{\e^2}{\l_i^2}\right)}e^{N L(Z+i \e)}\]\\
&\geq \mathbb{E}\[e^{-\frac{1}{2}\sum_{i\in[N]}\log\left(1+\frac{\e^2}{\l_i^2}\right)}e^{N L(Z+i \e)}1_{W_{N,\e}}\]\,.
}}

On the event $W_{N,\e}$ it holds
\been{\aled{
-\sum_{i\in[N]}\log\left(1+\frac{\e^2}{\l_i^2}\right)&=-\sum_{\substack{i\in[N]\\i\,:\,|\l_i|\leq \e^2 }}\log\left(1+\frac{\e^2}{\l_i^2}\right)
-\sum_{\substack{i\in[N]\\i\,:\,|\l_i|> \e^2}}\log\left(1+\frac{\e^2}{\l_i^2}\right)\\
&\geq -N^{\frac{19}{20}}\log\left(1+\e^2e^{2N^{\frac{1}{30}}}\right)-N\log\left(1+\e^{-2}\right)\\
&\gtrsim -N^{\frac{19}{20}+\frac{1}{30}}-N\e^{-2}\gtrsim -N^{\frac{59}{60}} 
}}
as $\e\gtrsim N^{-\frac{1}{10}}$.
By the Cauchy-Schwarz inequality
\been{\label{eq:chehai0}\aled{
\mathbb{E}[e^{N L(Z)}]&\geq e^{-N^{\frac{59}{60}}}\mathbb{E}[e^{NL(Z+i \e)}1_{W_{N,\e}}]\\
&=e^{-N^{\frac{59}{60}}}\(\mathbb{E}[e^{N L(Z+i \e)}]-\mathbb{E}[e^{N L(Z+i \e)}1_{W^c_{N,\e}}]\)\\
&\geq e^{-N^{\frac{59}{60}}}\(\mathbb{E}[e^{N L(Z+i \e)}]-\sqrt{\mathbb{E}[e^{2NL(Z+i \e)}]}\sqrt{\P(W^c_{N,\e})}\)\\
&=e^{-N^{\frac{59}{60}}}\mathbb{E}[e^{N L(Z+i \e)}]\left(1-\sqrt{\frac{\mathbb{E}[e^{2N L(Z+i \e)}]}{\mathbb{E}[e^{N L(Z+i \e)}]^2}}\sqrt{\P(W^c_{N,\e})}\right)\,.
}}
It follows from \eqref{eq:concentration20} (with $r=2$) that
\been{
\label{eq:concen_pan}
\frac{\mathbb{E}[e^{2N L(Z+i \e))}]}{\mathbb{E}[e^{N L(Z+i \e)}]^2}\leq e^{cN^{\max\left\{-\frac{1}{10},-\d\right\}}}.
}
It remains only to bound $\P(W^c_{N,\e})$. Clearly
\be\label{eq:decompP0}
\P(W^c_{N,\e})\leq \P\left(\l_{\min}\leq e^{-N^{\frac{1}{30}}}\right)+\P\left(\card\{i\,:\,|\l_i|\leq \e^2\}\geq N^{\frac{19}{10}}\right). 
\ee
We estimate these two addenda separately. The first one is readily controlled by Lemma \ref{lem:lminP}. We have
\be\label{eq:stimaP20}
\P\left(\l_{\min}\leq e^{-N^{\frac{1}{30}}}\right)\leq CN^2e^{-N^{\frac{1}{30}}}\,,
\ee
for some constant $C>0$ independent on $N$.

For the second term in \eqref{eq:decompP0} we use the Markov inequality
\be\label{eq:markov2}
\P\left(\card\{i\,:\,|\l_i|\leq \e^2\}\geq N^{\frac{19}{20}}\right)\leq N^{-\frac{19}{20}}\E\left[\card\{i\,:\,|\l_i|\leq \e^2\}\right]\,.
\ee
We have (bear in mind \eqref{eq:concentration0mu})
\bea
&&\E\left[\card\{i\,:\,|\l_i|\leq \e^2\}\right]=N\E\[\int_{-\e^2}^{\e^2}\nu_{Z}(dx)\]\nn\\
&=&N\int_{-\e^{2}}^{\e^2}\mu(dx)+N\int_{-\e^2}^{\e^2}(\E\[\nu_Z(dx)\]-\mu(dx)). \label{eq:sono30}
\eea
By Lemma \ref{lem:p_boundness} it holds
\be\label{eq:m30}
N\int_{-\e^2}^{\e^2}\mu(dx)\leq \frac{\sqrt2}{\pi\b}N\e^{2}\,. 
\ee
For the second addendum in \eqref{eq:sono30} we bound
\be\label{eq:prebai0}
\int_{-\e^2}^{\e^2}(\E\[\nu_Z(dx)\]-\mu(dx))\leq \sup_{x\in \R}|F_N(x)-F(x)|\,,
\ee
where
\been{
F_N(x):=\int_{-\infty}^x \mathbb{E}[\nu_Z(dx)]\,,\qquad F(x):=\int_{-\infty}^x \mu(dx)\,. 
}
By \cite[Theorem 2.1]{bai} we have
\begin{align}
\sup_{x\in \R}|F_N(x)-F(x)|&\leq C\(\int_{\R}|\E[\widehat\nu_Z(u+i\e)]-\widehat\mu(u+i\e)|du\label{eq:Bai10}\right.\\
&\left.+\frac 1\e\sup_{x}\int_{|y|\leq 4\e}|F(x+y)-F(x)|dy\)\,. \label{eq:Bai30}
\end{align}
The condition
\be\label{eq:condNv}
\e^6\geq C\frac{\b^6}{N}
\ee
yields by Lemma \ref{lem:convergence} (with $p=1$) 
\been{
\label{eq:Bai1bis0}
\int_{\R}|\E[\widehat\nu_Z(u+i\e)]-\widehat\mu(u+i\e)|du\lesssim \frac{\b^6}{N\e^6}.
}

Note that since $\e>N^{-\frac{1}{10}}$ \eqref{eq:condNv} is always satisfied. 

Now we estimate \eqref{eq:Bai30}. We write
\be
|F(x+y)-F(x)|=
\begin{cases}
\int_x^{x+y}\mu(dt)&y>0\\
\int^x_{x+y}\mu(dt)&y<0
\end{cases}\,.
\ee
Then, using Lemma \ref{lem:p_boundness}, we get
\be\label{eq:Bai3bis0}
\eqref{eq:Bai30}\leq \frac1\e\int_{|y|\leq 4\e}\left|\int_x^{x+y}\mu(dt)\right|dx\leq \frac{64}{\pi\b}\e\,. 
\ee
Combining \eqref{eq:prebai0}, \eqref{eq:Bai10}, \eqref{eq:Bai30}, \eqref{eq:Bai1bis0} and \eqref{eq:Bai3bis0} we find
\be\label{eq:prebai-fine}
\int_{-\e^2}^{\e^2}(\E\[\nu_Z(dx)\]-\mu(dx))\lesssim \frac\e\b+\frac{\b^2}{N\e^6}\,. 
\ee
This together with (\ref{eq:sono30}), (\ref{eq:m30}) gives
\be\label{eq:premarco0}
\mathbb{E}\left[\card\{i\,:\,|\l_i|\leq \e^2\}\right]\lesssim \frac{N\e}{\b}+\frac{\b^2}{\e^6}.\,
\ee
When we plug this bound into (\ref{eq:markov2}) we obtain
\be\label{eq:marco0}
\P\left(\card\{i\,:\,|\l_i|\leq \e^2\}\geq N^{\frac{19}{20}}\right)\lesssim\frac{N^{\frac{1}{20}}\e}{\b}+\frac{\b^2}{N^{\frac{19}{20}}\e^6}\lesssim\frac{1}{N^{\frac{1}{30}}} 
\ee
as $\e\in[N^{-1/10},N^{-1/12}]$.
Combining \eqref{eq:decompP0}, \eqref{eq:stimaP20}, \eqref{eq:marco0},  we get
\be\label{eq:stimaP22}
\P(W^c_{N,\e})\lesssim \frac1{N^{\frac{1}{30}}}\,.
\ee
Using this bound in \eqref{eq:banana0} together with \eqref{eq:concen_pan} ends the proof.
\end{proof}

\begin{proof}[Proof of Theorem \ref{thm:GOE_plus_D_det} (with $A=0$)]
Take any $\e\in[N^{-\frac{1}{10}},N^{-1/12}]$. We have
\been{
\label{eq:concentration200}
\aled{
&\left|\frac{1}{N}\log\(\E\[e^{NL(Z)}\]\)-\int\log(|x|)\mu(dx)\right|\\
&\leq \left|\frac{1}{N}\log\(\E\[e^{NL(Z+i \e)}\]\)-\int\log(|x|)\mu(dx)\right|\\
&+\left|\frac{1}{N}\log\(\E\[e^{NL(Z)}\]\)-\frac{1}{N}\log\(\E\[e^{NL(Z+i \e)}\]\)\right|\\
&\lesssim \frac{1}{N^{\min(1/24,\d)}}+\frac{1}{N^{1/60}}. 
}
}
We estimated in the last inequality the first summand by Proposition \ref{lem:concentration0} (with $r=1$) and the second one by Proposition \ref{lemma:panini0}. 
\end{proof}

\section{Log-determinants with a lower rank term}\label{sect:logdet}

In this section we show that the asymptotics of the determinant established in the previous section is stable if we add a lower rank term that does not need to be small. This will complete the proof of Theorem \ref{thm:GOE_plus_D_det}. Recall from Section \ref{section:RegDet} that for any $N\times N$ matrix $M$ we have
\be\label{eq:defLsect4}
L(M)=\frac1N\log\det M\,,\qquad L(M+i\e)= L(M+i\e I_N)\,. 
\ee

As in the last section we analyse first the regularised determinant.
The goal here is to prove the following extension of Proposition \ref{lem:concentration0}.

\begin{proposition}
\label{lem:concentration}
Let $\b>0$, $N,k,p\in\N$ with $k<N$ and $0\leq2p\leq N-k$. Consider 
\been{
\label{eq:def_Zstar_mat}
Z_*:=\frac{\b}{\sqrt{N}}J-(OD_N(m)O^T)^{([k],[k])}+A,
}
where $J$ is a $(N-k)\times (N-k)$ GOE matrix, $O$ is a $N\times N$ deterministic orthogonal matrix, $D,A$ are symmetric deterministic $(N-k)\times (N-k)$ matrices such that for a $\d\in(0,1)$ it holds
\be\label{eq:AD}
\|A\|_{op},\|D\|_{op}\leq e^{N^{1-\d}}.
\ee
Moreover $1\leq\rank A\leq p$. 
Then for any $\e>0$ there exists a constant $c:=c(\a,\b,k,p)>0$ such that (recall $\mu=\nu_D\boxplus \mu_{\rm{sc},\b}$)
\been{
\left|\frac{1}{Nr}\log\(\E\[e^{r NL(Z_*+i\e)}\]\)-\int_{\R}\log(|x|)\mu(dx)\right|\leq c\(\frac{p+k}{N^\d}+\frac{1}{N^{\min\(\frac{1}{24},\d\)}}\).
} 
\end{proposition}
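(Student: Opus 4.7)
\emph{Strategy.} The plan is to reduce to Proposition \ref{lem:concentration0} via two complementary steps: (I) strip off the low-rank perturbation $A$ by iterated rank-one updates of the determinant, and (II) apply Proposition \ref{lem:concentration0} (with ``$N$'' there set to $N-k$) to the residual matrix $Z_*-A=\tfrac{\b}{\sqrt N}J-(OD_N(m)O^T)^{([k],[k])}$, and then compare the resulting free-convolution limit with $\mu$.

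\emph{Step (I): removing $A$.} Diagonalise $A=\sum_{j=1}^r\sigma_j u_j u_j^T$ with $r\leq p$ and $\|u_j\|=1$, and set $Z_{*,0}:=Z_*-A$, $Z_{*,j}:=Z_{*,j-1}+\sigma_j u_j u_j^T$. The matrix-determinant lemma gives
\[
L(Z_*+i\e)-L(Z_{*,0}+i\e)=\frac{1}{N}\sum_{j=1}^r\log\bigl|1+\sigma_j\,u_j^T(Z_{*,j-1}+i\e I_{N-k})^{-1}u_j\bigr|.
\]
The upper bound $|1+\cdots|\leq 1+|\sigma_j|/\e$ is immediate from $\|(Z_{*,j-1}+i\e I_{N-k})^{-1}\|_{\rm op}\leq\e^{-1}$. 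For the lower bound, split into regimes: if $|\sigma_j|\leq\e/2$ then $|1+\cdots|\geq 1/2$ trivially; if $|\sigma_j|>\e/2$, then the identity $\Im(Z_{*,j-1}+i\e I_{N-k})^{-1}=-\e(Z_{*,j-1}^2+\e^2 I_{N-k})^{-1}$ together with $\|Z_{*,j-1}\|_{\rm op}\leq e^{N^{1-\delta}}+\|\tfrac{\b}{\sqrt N}J\|_{\rm op}$ (the second term controlled on the high-probability GOE event $\{\|\tfrac{\b}{\sqrt N}J\|_{\rm op}\leq C\b\}$) yields $|1+\cdots|\geq \e^2/(2\|Z_{*,j-1}\|_{\rm op}^2)\geq e^{-CN^{1-\delta}}$ for $\e\geq N^{-1/10}$. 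Summing these two-sided bounds gives $|L(Z_*+i\e)-L(Z_{*,0}+i\e)|\leq Cp/N^\delta$ pointwise on the high-probability event; the complementary event is absorbed by a Cauchy-Schwarz argument as in Proposition \ref{lemma:panini0}, leading to
\[
\Bigl|\tfrac1{Nr}\log\E[e^{rNL(Z_*+i\e)}]-\tfrac1{Nr}\log\E[e^{rNL(Z_{*,0}+i\e)}]\Bigr|\leq\tfrac{Cp}{N^\delta}.
\]

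\emph{Step (II): applying Proposition \ref{lem:concentration0} and comparing limits.} Set $\widetilde D:=(OD_N(m)O^T)^{([k],[k])}$ and $\b':=\b\sqrt{(N-k)/N}$, so $Z_{*,0}=\tfrac{\b'}{\sqrt{N-k}}J-\widetilde D$. Since principal submatrices do not increase the operator norm, $\|\widetilde D\|_{\rm op}\leq\|D\|_{\rm op}\leq e^{N^{1-\delta}}$, and Proposition \ref{lem:concentration0} applies in dimension $N-k$: with $L'(M):=(N-k)^{-1}\log|\det M|$ and $\mu':=\nu_{\widetilde D}\boxplus\mu_{{\rm sc},\b'}$,
\[
\Bigl|\tfrac1{(N-k)r}\log\E[e^{r(N-k)L'(Z_{*,0}+i\e)}]-\int\log|x|\,d\mu'(x)\Bigr|\leq\tfrac{c}{(N-k)^{\min(1/24,\delta)}}.
\]
The identity $NL(M)=(N-k)L'(M)$ on $(N-k)\times(N-k)$ matrices converts normalisations and leaves only a harmless $(N-k)/N$ prefactor correction of size $O(k/N)$. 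One then compares $\mu'$ with $\mu=\nu_D\boxplus\mu_{{\rm sc},\b}$: the two measures differ through the $O(k/N)$ shift $\b'\to\b$ of the semicircle parameter and, if $\nu_D$ in the statement refers to $\nu_{D_N(m)}$, through the Cauchy-interlacing discrepancy of order $k/N$ in Kolmogorov distance between $\nu_{\widetilde D}$ and $\nu_{D_N(m)}$. Each is transferred to $\int\log|\cdot|\,d\rho$ by first regularising via Lemma \ref{lem:p_boundness} (paying $O(\sqrt\e)$ for the singularity at the origin, exactly as in \eqref{eq:upper_logD0}) and then using that $\log|x+i\e|$ has total variation $O(\|D\|_{\rm op}+|\log\e|)=O(N^{1-\delta})$ on the common support; integration by parts against the $k/N$ Kolmogorov bound (transferred through $\boxplus$ via continuity of the subordination functions) produces an $O(k/N^\delta)$ error on the log-integral. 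Combining Steps (I) and (II) with the optimal choice $\e\sim N^{-1/10}$ gives the claimed estimate.

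\emph{Main obstacle.} The delicate step is the last comparison of $\mu'$ to $\mu$: because $\log|\cdot|$ is singular at zero while the supports of $\mu,\mu'$ can extend out to scale $e^{N^{1-\delta}}$, even the benign $k/N$ Kolmogorov-type discrepancy on the operand of $\boxplus$ propagates to an $O(k/N^\delta)$ error on the log-integral. This mechanism is precisely what produces the $(p+k)/N^\delta$ term in the final rate rather than the sharper $(p+k)/N$ one might naively expect.
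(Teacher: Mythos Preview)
Your overall two-step strategy is sensible, but it diverges from the paper's route in a way that creates a genuine gap in Step~(II).

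The paper does \emph{not} apply Proposition~\ref{lem:concentration0} in dimension $N-k$ and then compare two free convolutions. Instead it first uses Gaussian concentration to pass from $\tfrac{1}{Nr}\log\E[e^{rNL(Z_*+i\e)}]$ to $\E[L(Z_*+i\e)]$, and then compares $\E[L(Z_*+i\e)]$ directly with $\E[L(Z+i\e)]$ for the \emph{full $N\times N$} matrix $Z=\tfrac{\b}{\sqrt N}J_N-D$, using the distributional identity $Z_*\eqd(OZO^T)^{([k],[k])}+A$ together with the deterministic eigenvalue-interlacing Lemmas~\ref{lemma:erstatz} and~\ref{lemma:erstatz2} (the latter handles the passage from a matrix to its principal minor). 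Finally Proposition~\ref{lem:concentration0} is applied to $Z$ itself, which yields $\int\log|x|\,d\mu$ with $\mu=\nu_D\boxplus\mu_{{\rm sc},\b}$ on the nose. No comparison between $\mu$ and any auxiliary $\mu'$ is ever needed.

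Your Step~(II) has a real gap precisely at the point you flag as the ``main obstacle''. The assertion that the $k/N$ Cauchy-interlacing bound on $\nu_D$ versus $\nu_{\widetilde D}$ is ``transferred through $\boxplus$ via continuity of the subordination functions'' to a $k/N$ Kolmogorov bound on $\mu$ versus $\mu'$ is not a result one can just invoke; free convolution is known to be a contraction in L\'evy distance, but Kolmogorov stability requires additional argument (and the bounded-density Lemma~\ref{lem:p_boundness} only helps once you already control the L\'evy distance). More seriously, the shift $\b\to\b'=\b\sqrt{(N-k)/N}$ is not an $O(k/N)$ perturbation at the level of Kolmogorov distance: writing $\mu_{{\rm sc},\b}=\mu_{{\rm sc},\b'}\boxplus\mu_{{\rm sc},\sigma}$ with $\sigma^2=\b^2k/N$ gives only $d_L\lesssim\sqrt{k/N}$, which after your integration-by-parts step would produce $\sqrt{k/N}\cdot N^{1-\delta}$ rather than $k/N^\delta$. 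So the final rate you claim does not follow from the sketch given.

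A smaller issue in Step~(I): the passage from the pointwise bound on $|L(Z_*+i\e)-L(Z_{*,0}+i\e)|$ to the log-MGF comparison is not quite ``Cauchy--Schwarz as in Proposition~\ref{lemma:panini0}''. Your bound is random through $\|Z_{*,j-1}\|_{\rm op}$, and on the bad GOE event the crude estimate $e^{rNL(Z_*+i\e)}\le(\|Z_*\|_{\rm op}+\e)^{rN}$ can be of order $e^{rN^{2-\delta}}$, which swamps the $e^{-cN}$ probability. The clean fix---and what the paper does---is to apply concentration \emph{first} to reduce to expectations, where $\E[\log(1+\tfrac{\b}{\sqrt N}\|J\|_{\rm op})]$ is harmless (cf.\ Lemma~\ref{eq:tail_cond1}).

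In short: your Step~(I) via the matrix-determinant lemma is a legitimate alternative to the paper's Weyl-inequality Lemma~\ref{lemma:erstatz}, but Step~(II) should be replaced by the paper's embedding trick (Lemma~\ref{lemma:erstatz2}), which avoids the free-convolution comparison altogether and delivers $\mu$ directly.
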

The following corollary is the main result of this section. We obtain Theorem \ref{thm:GOE_plus_D_det} from Corollary \ref{lem:almost_fin_det} just taking $k=0$. 
\begin{corollary}
\label{lem:almost_fin_det}
Let $\b>0$, $N,k,p\in\N$ with $k<N$ and $0\leq2p\leq N-k$. Consider $Z_*$ as in (\ref{eq:def_Zstar_mat}), 
where $J$ is a $(N-k)\times (N-k)$ GOE matrix, $O$ is a $N\times N$ deterministic orthogonal matrix, $D,A$ are symmetric deterministic $(N-k)\times (N-k)$ matrices such that for a $\d\in(0,1)$ (\ref{eq:AD}) holds and $1\leq\rank A\leq p$.
Then
\been{
\left|\frac1N\log \mathbb{E}\left[|\det(Z_*)|\right]- \int \log|x|\mu(dx)\right|\lesssim \frac{1}{(N-k)^{\min\(\d,\frac{1}{60}\)}}+\frac{p+k}{N^\d}+\frac{1}{N^{\min\(\d,\frac{1}{24}\)}}.
}
\end{corollary}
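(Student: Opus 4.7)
The approach mirrors the derivation at the end of Section \ref{section:RegDet}, in which the $A=0$ version of Theorem \ref{thm:GOE_plus_D_det} is obtained by combining Proposition \ref{lem:concentration0} (regularised determinant) with Proposition \ref{lemma:panini0} (removal of the regularisation). In the present setting Proposition \ref{lem:concentration} plays the role of Proposition \ref{lem:concentration0}, so the only missing ingredient is an analog of Proposition \ref{lemma:panini0} tailored to $Z_*$, namely the comparison estimate
\[
\left|\tfrac1N\log\E[e^{NL(Z_*)}]-\tfrac1N\log\E[e^{NL(Z_*+i\e)}]\right|\lesssim (N-k)^{-1/60}
\]
valid for $\e\in[(N-k)^{-1/10},(N-k)^{-1/12}]$. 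Once this is in hand the corollary follows from the triangle inequality together with Proposition \ref{lem:concentration} applied at $r=1$.

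The plan to establish this comparison is to reproduce the proof of Proposition \ref{lemma:panini0} step by step, with $W_{N,\e}$ replaced by
\[
W:=\{\card\{i:|\l_i(Z_*)|\leq\e^2\}\leq 2p+(N-k)^{19/20}\}\cap\{\l_{\min}(Z_*)>e^{-(N-k)^{1/30}}\}.
\]
The extra $2p$ in the counting threshold is dictated by the rank-$p$ perturbation $A$, which may push up to $2p$ eigenvalues toward zero. On $W$ the deterministic computation of Proposition \ref{lemma:panini0} still yields $-\sum_i\log(1+\e^2/\l_i^2)\gtrsim -(N-k)^{59/60}$ modulo a $p(N-k)^{1/30}$ correction, which is absorbed into the $(p+k)/N^\d$ term in the statement. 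A Cauchy--Schwarz step together with the moment-ratio bound obtained by applying Proposition \ref{lem:concentration} at $r=1$ and $r=2$ then reduces the task to controlling $\P(W^c)$.

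The main obstacle is ensuring that the potentially large rank-$p$ perturbation $A$ does not spoil the bounds on either component of $W^c$. Two observations handle this. First, the smallest singular value tail of Lemma \ref{lem:lminP} extends to $Z_*$ unchanged, since the proof in \cite{SST} uses only the Gaussianity of $J$ and is insensitive to adding a fixed symmetric matrix to the argument. Second, Weyl's interlacing inequality implies that the eigenvalue counting functions of $Z_*$ and of $\tilde Z:=Z_*-A=\frac{\b}{\sqrt N}J-(OD_N(m)O^T)^{([k],[k])}$ differ by at most $2p$ on every interval, because $A$ has rank $\leq p$. Hence
\[
\P(\card\{i:|\l_i(Z_*)|\leq\e^2\}>2p+(N-k)^{19/20})\leq\P(\card\{i:|\l_i(\tilde Z)|\leq\e^2\}>(N-k)^{19/20}),
\]
and since $\tilde Z$ fits the hypotheses of Lemma \ref{lem:convergence}, the empirical spectral distribution estimate via \cite[Theorem 2.1]{bai} used in the proof of Proposition \ref{lemma:panini0} transfers verbatim to $\tilde Z$ and yields $\P(W^c)\lesssim (N-k)^{-1/30}$. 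Plugging this into the Cauchy--Schwarz estimate produces the desired $O((N-k)^{-1/60})$ comparison and completes the proof.
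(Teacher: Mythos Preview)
Your proof is correct, but it takes a longer path than the paper's. The key simplification you are missing is that $A$ is \emph{deterministic}, so $Z_*=\frac{\b}{\sqrt N}J+D'$ with $D':=-(ODO^T)^{([k],[k])}+A$ is itself of the form ``$(N-k)\times(N-k)$ GOE plus a deterministic symmetric matrix'' with $\|D'\|_{\rm op}\le \|D\|_{\rm op}+\|A\|_{\rm op}\le 2e^{N^{1-\d}}$. Hence Proposition~\ref{lemma:panini0} applies to $Z_*$ \emph{as a black box} (with $N$ replaced by $N-k$ and $\b$ replaced by $\b\sqrt{(N-k)/N}$), immediately yielding
\[
\left|\tfrac1N\log\E[e^{NL(Z_*)}]-\tfrac1N\log\E[e^{NL(Z_*+i\e)}]\right|\lesssim (N-k)^{-\min(\d,1/60)}.
\]
This is precisely what the paper does: split by the triangle inequality into the ``remove the regularisation'' piece, handled by Proposition~\ref{lemma:panini0} applied to $Z_*$, and the ``regularised determinant vs.\ $\int\log|x|\,\mu(dx)$'' piece, handled by Proposition~\ref{lem:concentration} at $r=1$.

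Your route---introducing a modified event $W$ with threshold $2p+(N-k)^{19/20}$, invoking Weyl interlacing to compare eigenvalue counts of $Z_*$ and $\tilde Z=Z_*-A$, and then running the Bai/Stieltjes argument on $\tilde Z$---is valid, but every step of it is already encapsulated inside Proposition~\ref{lemma:panini0} once one realises that $Z_*$ satisfies its hypotheses. In particular the interlacing detour and the extra $2p$ in the threshold are unnecessary: the deterministic shift $D'$ (which contains $A$) plays no special role in the proof of Proposition~\ref{lemma:panini0}, since that argument only uses the Gaussianity of $J$, the bound $\|D'\|_{\rm op}\le e^{(N-k)^{1-\d'}}$, and Lemma~\ref{lem:convergence}, all of which hold for $Z_*$ directly.
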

\begin{proof}
We split
\bea
&&\left|\frac1N\log \mathbb{E}\left[|\det(Z_*)|\right]- \int \log|x|\mu(dx)\right|\nn\\
&\leq &\left|\frac1N\log \mathbb{E}\left[|\det(Z_*)|\right]- \frac1N\log \mathbb{E}\left[|\det(Z_*+i\e I_{N-k})|\right]\right|\label{eq:detAll1}\\
&+ &\left|\frac1N\log \mathbb{E}\left[|\det(Z_*+i\e I_{N-k})|\right]- \int \log|x|\mu(dx)\right|.\label{eq:detAll2}
\eea
Since $Z_*$ is a $N-k\times N-k$ GOE plus a constant matrix satisfying the bounds (\ref{eq:AD}), Proposition \ref{lemma:panini0} applies. Thus, for $\e\in (N^{-\frac{1}{10}},N^{-\frac{1}{12}})$ it holds
\been{
\label{eq:detI}
\eqref{eq:detAll1}\lesssim \frac{1}{(N-k)^{\min\(\d,\frac{1}{60}\)}}.
}
Moreover, Proposition \ref{lem:concentration} and again (\ref{eq:AD}) give
\been{
\label{eq:detII}
\eqref{eq:detAll2}\lesssim\frac{p+k}{N^\d}+\frac{1}{N^{\min\(\d,\frac{1}{24}\)}}.
}
Combining \eqref{eq:detI}, and \eqref{eq:detII} we end the proof.
\end{proof}

\

The proof of Proposition \ref{lem:concentration} needs some preparation. We start by two deterministic lemmas. The first one compares the quantity $L$ (see \eqref{eq:defLsect4}) between matrices which differ by a lower rank matrix. Then in the second lemma we deduce a similar bound for certain minors of a given matrix. 

\begin{lemma}\label{lemma:erstatz}
Let $k,N\in \N$ with $N>2k$. Let $A,B\in \R^{N\times N}$ be symmetric, invertible matrices such that $\textup{rank}(A-B)\leq k$.
Then
\be\label{eq:erstatz-compl}
\aled{
N|L(A)-L(B)|<2k|\log(\|A\|_{\textup{op}})|+2k|\log(\|B\|_{\textup{op}})|-2k\log(\lambda_{\textup{min}}(B))-2k\log(\lambda_{\textup{min}}(A)). 
}
\ee
\end{lemma}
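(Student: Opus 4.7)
The plan is to express both log-determinants through singular values and then invoke the Weyl inequality for singular values under a low-rank perturbation. For any symmetric invertible matrix $M\in\R^{N\times N}$ denote by $s_1(M)\ge s_2(M)\ge\cdots\ge s_N(M)>0$ its singular values in decreasing order, so that $s_1(M)=\|M\|_{\rm op}$, $s_N(M)=\lambda_{\min}(M)$, and
\begin{equation*}
NL(M)=\log|\det M|=\sum_{i=1}^N\log s_i(M).
\end{equation*}
The key input is the classical bound $s_{i+j-1}(X+Y)\leq s_i(X)+s_j(Y)$. Applied to $X=B$, $Y=A-B$, with $j=k+1$, and using that $\textup{rank}(A-B)\leq k$ forces $s_{k+1}(A-B)=0$, it yields
\begin{equation*}
s_{i+k}(A)\leq s_i(B)\quad\text{and, symmetrically,}\quad s_{i+k}(B)\leq s_i(A)\qquad\text{for }i\in[N-k].
\end{equation*}

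Next, isolate the top $k$ singular values of $A$ and apply the Weyl bound to the remainder:
\begin{equation*}
\sum_{i=1}^N\log s_i(A)=\sum_{i=1}^k\log s_i(A)+\sum_{i=k+1}^N\log s_i(A)\leq k\log\|A\|_{\rm op}+\sum_{j=1}^{N-k}\log s_j(B).
\end{equation*}
Rewriting the last sum as $NL(B)-\sum_{j=N-k+1}^N\log s_j(B)$ and bounding each of the remaining $k$ singular values of $B$ from below by $\lambda_{\min}(B)$ gives
\begin{equation*}
N(L(A)-L(B))\leq k\log\|A\|_{\rm op}-k\log\lambda_{\min}(B)\leq k|\log\|A\|_{\rm op}|-k\log\lambda_{\min}(B).
\end{equation*}

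Exchanging the roles of $A$ and $B$ produces the analogous bound in the opposite direction, and \eqref{eq:erstatz-compl} follows by combining the two estimates via the crude relaxation $\max(x,y)\leq|x|+|y|$, which is precisely what accounts for the factor $2k$ in the statement. The main obstacle is purely bookkeeping: one has to orient the Weyl inequality correctly, track which boundary sum is to be controlled by $\|\cdot\|_{\rm op}$ and which by $\lambda_{\min}$, and use $N>2k$ to guarantee that the top-$k$ and bottom-$k$ portions of the spectrum of $B$ extracted in the splitting of $NL(B)$ are disjoint.
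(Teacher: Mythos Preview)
Your proof is correct and takes a genuinely different route from the paper. The paper works with the signed eigenvalues $\lambda_1(A)\ge\cdots\ge\lambda_N(A)$ and applies the additive Weyl inequality for eigenvalues; because $\log|\lambda_i|$ is monotone only on each sign sector, this forces a case analysis according to the number $p$ of positive eigenvalues of $A$ (four cases), after which the paper arrives at $N(L(A)-L(B))\le 2k\log\|A\|_{\rm op}-2k\log\lambda_{\min}(B)$ and symmetrically. Your use of singular values sidesteps all of this: since $|\det M|=\prod_i s_i(M)$ and the $s_i$ are already nonnegative and ordered, the singular-value Weyl inequality $s_{i+k}(A)\le s_i(B)$ gives directly $N(L(A)-L(B))\le k\log\|A\|_{\rm op}-k\log\lambda_{\min}(B)$, with the sharper constant $k$ and no case distinction. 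This is a cleaner argument.

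Two small remarks. First, your final relaxation ``$\max(x,y)\le |x|+|y|$'' is not quite the step that yields \eqref{eq:erstatz-compl}: what you actually obtain is the bound $k|\log\|A\|_{\rm op}|+k|\log\|B\|_{\rm op}|-k\log\lambda_{\min}(A)-k\log\lambda_{\min}(B)$ with coefficient $k$, and passing to $2k$ is only a relaxation because this expression is nonnegative. That in turn relies on $\lambda_{\min}(A)\le\|B\|_{\rm op}$, which does follow from your Weyl bound $s_{k+1}(A)\le s_1(B)$ together with $s_N(A)\le s_{k+1}(A)$, but you might state it explicitly. Second, your comment that $N>2k$ is needed to keep the top-$k$ and bottom-$k$ blocks disjoint does not really apply to your argument (you isolate the top $k$ of $A$ and the bottom $k$ of $B$, which are in different matrices); in fact your proof only uses $N>k$. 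The hypothesis $N>2k$ is genuinely needed in the paper's eigenvalue approach, where in the generic case both the top-$k$ and bottom-$k$ eigenvalues of the \emph{same} matrix $A$ are peeled off.
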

\begin{proof}
Let $p\in[N]\cup\{0\}$ denote the number of positive eigenvalues of $A$, that is
\been{
\label{eq:def_p}
\l_1(A)\geq\ldots\geq\l_p(A)\geq0\qquad \mbox{and}\qquad 0\geq\l_{p+1}(A)\geq\ldots\geq\l_N(A)\,. 
}
Since the matrix $A-B$ has rank less or equal to $k$, then $\lambda_j(A-B)=0$ for all $j\in\{k+1,\cdots,N-k\}$. Hence the Weyl inequality for the eigenvalues yields
\bea
\l_j(A)\geq \l_{j+k}(B)\,,&\quad&1\leq j\leq N-k\,,\label{eq:weil1}\\
\l_{j}(A)\leq \l_{j-k}(B)\,,&\quad& k+1\leq j\leq N\,.\label{eq:weil2}
\eea
Then by \eqref{eq:def_p}
\bea
0\leq \l_j(A)\leq \l_{j-k}(B)\,,&\quad&\textup{if $p>k$ and } k+1\leq j\leq p\,,\label{eq:weil3}\\
0\geq \l_{j}(A)\geq \l_{j+k}(B)\,,&\quad&\textup{if $p<N-k$ and } p+1\leq j\leq N-k\,.\label{eq:weil4}
\eea
For the remaining eigenvalues, we consider the upper bound
\been{
\label{eq:UBZK}
|\l_j(A)|\leq  \|A\|_{\textup{op}},\quad \forall j\in \{1,\cdots,k\}\cup\{N-k+1,\cdots,N\}
} 
and the lower bound
\been{
\label{eq:UBZK2}
|\l_j(B)|\geq \lambda_{\textup{min}}(B),\qquad j\in[N].
} 
We distinguish few cases.
\begin{itemize}

\item[I.] $p=0$. We have
\been{\label{eq:daqua000}\aled{
NL(A)&\leq \sum_{j=1}^{N-k} \log \(\left|\l_{j+4}\left( B\right)\right|\)+\sum^N_{i=N-k+1}\log\left(|\l_{i}\left( A\right)|\right)\\
&\leq N L(B)-k\log\left(\l_{\textup{min}}\left( B\right)\right)+k\log\(\|A\|_{\textup{op}}\).
}
}

\item[II.] $p\in[k]$. We have
\been{\label{eq:daqua00}\aled{
NL(A)&\leq \sum_{j=p+1}^{N-k} \log \(\left|\l_{j+k}\left( B\right)\right|\)+\sum^p_{j=1}\log\left(|\l_{j}\left( A\right)|\right)+\sum^N_{j=N-k+1}\log\left(|\l_{j}\left( A\right)|\right)\\
&\leq N L(B)-(p+k)\log\left(\l_{\textup{min}}\left( B\right)\right)+(p+k)\log\(\|A\|_{\textup{op}}\)
}}
and $p+k\leq 2k$.

\item[III.] $p\in \{k+1,\cdots,N-k\}$. We have
\been{\label{eq:daqua0}\aled{
NL(A)&\leq \sum_{j=k+1}^{p} \log\(\left|\l_{j-k}\left( B\right)\right|\)+\sum_{j=p+1}^{N-k} \log \(\left|\l_{j+k}\left( B\right)\right|\)+\sum^k_{i=1}\log\left(|\l_{i}\left( A\right)|\right)+\sum^{k-1}_{i=0}\log\left(|\l_{N-i}\left( A\right)|\right)\\
&=N L(B)-\sum^{p+k}_{j=p-k+1}\log(|\l_{j}\left( B\right)|)+2k\log\(\|A\|_{\textup{op}}\)\\
&=N L(B)-2k\log(\l_{\textup{min}}\left( B\right))+2k\log\(\|A\|_{\textup{op}}\).
}}

\item[IV.] $p\in \{N-k+1,\cdots,N\}$. We have
\been{\label{eq:daqua0000}\aled{
N L(A)&\leq \sum_{j=k+1}^{p} \log \(\left|\l_{j-k}\left( B\right)\right|\)+\sum^{k}_{i=1}\log\left(|\l_{i}\left( A\right)|\right)+\sum^{N}_{j=p+1}\log\left(|\l_{j}\left( A\right)|\right)\\
&\leq N L(B)-(N-p+k)\log\left(\l_{\textup{min}}\left(B\right)\right)+(N-p+k)\log\(\|A\|_{\textup{op}}\)
}}
and $N-p+k\leq 2k$.
\end{itemize}
From the above cases, we deduce
\been{\label{eq:diffAB}
N(L(A)-L(B))\leq 2k\log(\|A\|_{\textup{op}})-2k\log(\l_{\min}(B)).
}
Reverting the role of $A$ and $B$ we obtain also
\been{\label{eq:diffBA}
N(L(B)-L(A))\leq2k\log(\|B\|_{\textup{op}})-2k\log(\l_{\min}(A)).
}
Combining \eqref{eq:diffAB} and \eqref{eq:diffBA} we get
\be
N|L(A)-L(B)|\leq2k\max\(\log(\|A\|_{\textup{op}})-\log(\l_{\min}(B)),\,\,\log(\|B\|_{\textup{op}})-\log(\l_{\min}(A))\),
\ee
whence (\ref{eq:erstatz-compl}) follows.
\end{proof}

Recall that $M^{([k],[k])}$ is the $(N-k)\times(N-k)$ matrix obtained by removing the first $k$ rows and the first $k$ columns from $M$, i.e.
\been{
\label{eq:def_Ap}
(M^{([k],[k])})_{ij}:=M_{i+k,j+k},\quad \forall (i,j)\in [N-k]^2.
}

\begin{lemma}
\label{lemma:erstatz2}
Let $k,N\in \N$ with $N>2 k$. Let $A$ be a $N\times N$ real symmetric matrix such that $A^{([k],[k])}$ is invertible. 
It holds
\be\label{eq:erstatz2}
N|L(A)-L(A^{([k],[k])})|
\leq 4k\log 2+8k|\log(\|A\|_{\textup{op}})|-4k\log(\l_{\min}(A))-4k\log(\l_{\min}(A^{([k],[k])})).
\ee
\end{lemma}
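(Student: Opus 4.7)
The strategy is to introduce the $N\times N$ block-diagonal auxiliary matrix
\[
B:=\begin{pmatrix}I_k & 0\\ 0 & A^{([k],[k])}\end{pmatrix},
\]
which is symmetric, invertible whenever $A^{([k],[k])}$ is, and satisfies $\det B=\det A^{([k],[k])}$. The point of this choice is two-fold: first, $A-B$ is nonzero only in its first $k$ rows and first $k$ columns, so $\rank(A-B)\le 2k$; second, the determinant of $B$ is exactly the determinant of the minor we care about. One then compares $A$ to $A^{([k],[k])}$ by going through $B$.

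Applying Lemma \ref{lemma:erstatz} to the pair $(A,B)$ with the rank bound $2k$ gives
\[
N|L(A)-L(B)|\;\le\; 4k|\log\|A\|_{\rm op}|+4k|\log\|B\|_{\rm op}|-4k\log\l_{\min}(A)-4k\log\l_{\min}(B).
\]
Because the spectrum of $B$ is $\{1\}$ with multiplicity $k$ together with $\operatorname{spect}(A^{([k],[k])})$, and because $\|A^{([k],[k])}\|_{\rm op}\le\|A\|_{\rm op}$ (submatrix inequality), one verifies directly that $|\log\|B\|_{\rm op}|\le|\log\|A\|_{\rm op}|+\log 2$ and $-\log\l_{\min}(B)\le-\log\l_{\min}(A^{([k],[k])})+\log 2$. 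Next, the identity $\det B=\det A^{([k],[k])}$ implies $NL(B)=(N-k)L(A^{([k],[k])})$, so
\[
N|L(B)-L(A^{([k],[k])})|\;=\;k\,|L(A^{([k],[k])})|\;\le\; k\bigl(|\log\|A\|_{\rm op}|+|\log\l_{\min}(A^{([k],[k])})|\bigr),
\]
where the last inequality bounds each of the $N-k$ eigenvalues of $A^{([k],[k])}$ in modulus by $\|A\|_{\rm op}$ from above and by $\l_{\min}(A^{([k],[k])})$ from below. Adding the two estimates via the triangle inequality
\[
N|L(A)-L(A^{([k],[k])})|\;\le\; N|L(A)-L(B)|+N|L(B)-L(A^{([k],[k])})|
\]
and absorbing constants yields \eqref{eq:erstatz2}.

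The main subtlety is invoking Lemma \ref{lemma:erstatz} with a rank-$2k$ perturbation under the hypothesis $N>2k$, since the lemma as stated would nominally require $N>4k$. This is handled by a case split: when $N>4k$ the lemma applies verbatim, while in the regime $2k<N\le 4k$ the desired inequality follows trivially from $N|L(A)-L(A^{([k],[k])})|\le N|L(A)|+N|L(A^{([k],[k])})|$, together with $N|L(M)|\le N\max(|\log\|M\|_{\rm op}|,|\log\l_{\min}(M)|)$ applied to $M=A$ and $M=A^{([k],[k])}$, bounded above by $4k$ times the same maximum. The remaining work is bookkeeping to collect the $\log 2$ corrections and the leftover $k|\log\l_{\min}(A^{([k],[k])})|$ coming from the comparison of $L(B)$ and $L(A^{([k],[k])})$ into the constants $4k\log 2$, $8k$, $-4k$, $-4k$ of the final bound.
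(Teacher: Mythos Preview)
Your approach is essentially the same as the paper's: introduce the block-diagonal auxiliary matrix (the paper calls it $\widetilde A$, you call it $B$), observe that $A-\widetilde A$ has rank at most $2k$, and invoke Lemma~\ref{lemma:erstatz}. The only substantive difference is a misreading of the convention for $L$. In this paper $L(M)=\frac{1}{N}\log|\det M|$ with the \emph{fixed ambient} $N$, regardless of the size of $M$; the paper makes this explicit in its proof by writing $L(\widetilde A)=\frac{1}{N}\log|\det\widetilde A|=\frac{1}{N}\log|\det A^{([k],[k])}|=L(A^{([k],[k])})$. Under this convention $L(B)=L(A^{([k],[k])})$ identically, so your second comparison step $N|L(B)-L(A^{([k],[k])})|=k|L(A^{([k],[k])})|$ and the bookkeeping that follows are unnecessary. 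Once that step is removed, your argument matches the paper's line for line: the paper controls $\|\widetilde A\|_{\rm op}\le 1+\|A\|_{\rm op}$ via $\log(1+|x|)\le\log 2+|\log x|$, and handles $\lambda_{\min}(\widetilde A)=\min(1,\lambda_{\min}(A^{([k],[k])}))$ via $|\log\lambda_{\min}(\widetilde A)|\le|\log\lambda_{\min}(A^{([k],[k])})|$ (slightly cleaner than your $+\log 2$ version, which actually fails when $\lambda_{\min}(A^{([k],[k])})>2$).

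One point worth noting: you correctly flag that applying Lemma~\ref{lemma:erstatz} with a rank-$2k$ perturbation formally requires $N>4k$, whereas the present lemma only assumes $N>2k$. The paper does not comment on this; your direct estimate for the narrow range $2k<N\le 4k$ is a legitimate patch.
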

\begin{proof}
Let
\been{
\widetilde{A}:=P_{[k]}+P^\perp_{[k]}AP^\perp_{[k]}
=\left(
\begin{array}{c|c}
I_k & 0\\[4pt]
\hline\\[-10pt]
0 & A^{([k],[k])}\\[4pt]
\end{array}
\right).
}
Thus
\been{
\det(\widetilde{A})=\det(A^{([k],[k])})
}
and so
\been{
L(\widetilde{A})=\frac{1}{N}\log\(|\det(\widetilde{A})|\)=\frac{1}{N}\log\(|\det(A^{([k],[k])})|\)=L(A^{([k],[k])}).
}
We  have
\be\label{eq:A-tildeA}
\widetilde{A}-A=P_{[k]}+P_{[k]}AP_{[k]}-P_{[k]}A-AP_{[k]}=P_{[k]}(I_N+A)P_{[k]}-(P_{[k]}A+AP_{[k]}). 
\ee
The first summand on the r.h.s. of (\ref{eq:A-tildeA}) is a upper diagonal square block of rank $k$, while the second one has a lower diagonal $N-k\times N-k$ block with all zero entries, so it has rank at most $2k$. 
Then also their sum will have the same bound for the rank, that is $\textup{rank}(\widetilde{A}-A)\leq 2k$. By Lemma \ref{lemma:erstatz} we get
\be\label{eq:leftright}
N|L(\widetilde{A})-L(A)|\leq 4k |\log(\|A\|_{\textup{op}})|+4k |\log(\|\widetilde{A}\|_{\textup{op}})|-4k \log(\l_{\min}(A))-4k \log(\l_{\min}(\widetilde{A})). 
\ee
We have
\been{
\label{eq:lmaAA}
\aled{
\|\widetilde{A}\|_{\textup{op}}\leq \|P_{[k]}\|_{\textup{op}}+\|P^{\perp}_{[k]}AP^{\perp}_{[k]}\|_{\textup{op}}\leq 1+\|A\|_{\textup{op}}
}
}
since $\|P_{[k]}\|_{\textup{op}}\leq 1$ and $\|P^{\perp}_{[k]}AP^{\perp}_{[k]}\|_{\textup{op}}\leq \|A\|_{\textup{op}}$. Moreover 
\been{
\l_{\min}(\widetilde{A})=\min\(1,\l_{\min}(A^{([k],[k])})\). 
}
Thus
\been{
\label{eq:lmii}
|\log(\l_{\min}(\widetilde{A}))|\leq |\log(\l_{\textup{min}}(A^{([k],[k])}))|
}
Hence, combining \eqref{eq:leftright}, \eqref{eq:lmaAA} and \eqref{eq:lmii} and using the inequality $\log(1+|x|)\leq \log 2+|\log(x)|$, we end the proof. 
\end{proof}

Recall that
\be\label{eq:defZsect4}
Z=\frac{\b}{\sqrt N} J-D=Z_*-A.
\ee

\begin{lemma}\label{eq:tail_cond1}
Let $Z$ be defined as in \eqref{eq:defZsect4}. 
For $N$ large enough, it holds
\been{
\label{eq:log_tail1}
\mathbb{E}[|\log (\|Z\|_{op})|] \lesssim \log N+|\log(\|D\|_{op}+\sqrt{2}\b)|.
}
\end{lemma}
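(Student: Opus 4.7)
The plan is to split the expectation into its positive and negative parts,
\[
\E|\log \|Z\|_{\rm op}| = \E(\log \|Z\|_{\rm op})_+ + \E(-\log \|Z\|_{\rm op})_+,
\]
and control each separately. For the positive part I would combine the triangle inequality $\|Z\|_{\rm op}\leq \|D\|_{\rm op}+\frac{\b}{\sqrt N}\|J\|_{\rm op}$ with the classical GOE estimate $\E\|J\|_{\rm op}\leq C\sqrt N$ and Jensen's inequality, obtaining $\E(\log \|Z\|_{\rm op})_+\leq \log\bigl(1+\|D\|_{\rm op}+C\b\bigr)$. An elementary manipulation of the logarithm (splitting the cases $\|D\|_{\rm op}+\sqrt 2\b\geq 1$ and $<1$) then bounds this by $|\log(\|D\|_{\rm op}+\sqrt 2 \b)|+O(1)$.

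For the negative part I would use the layer-cake identity
\[
\E(-\log \|Z\|_{\rm op})_+ = \int_0^\infty \P\bigl(\|Z\|_{\rm op} \leq e^{-t}\bigr)\, dt,
\]
together with the deterministic lower bound $\|Z\|_{\rm op}\geq |Z_{11}|=|\frac{\b}{\sqrt N}J_{11}-D_{11}|$. Since $J_{11}\sim \mathcal{N}(0,2)$ the density of $Z_{11}$ is pointwise bounded by $\frac{\sqrt N}{2\b\sqrt{\pi}}$, so Gaussian anti-concentration yields $\P(\|Z\|_{\rm op}\leq \eta)\leq \min(1,C\sqrt N\eta/\b)$. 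Truncating the $t$-integral at $t^\ast=\log(C\sqrt N/\b)$ gives $\E(-\log \|Z\|_{\rm op})_+\lesssim \log N+|\log \b|+O(1)$. This already suffices in the regime $\|D\|_{\rm op}\lesssim \b$, since there $\|D\|_{\rm op}+\sqrt 2 \b\asymp \b$ and the quantities $|\log \b|$ and $|\log(\|D\|_{\rm op}+\sqrt 2\b)|$ agree up to an additive constant.

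The main remaining difficulty is the regime $\|D\|_{\rm op}\gg \b$, where the naive anti-concentration bound is too crude (it still reads $|\log\b|$ rather than the sharper $|\log(\|D\|_{\rm op}+\sqrt 2\b)|$). Here I would instead exploit Gaussian concentration of the GOE operator norm: with probability at least $1-e^{-cN}$ one has $\frac{\b}{\sqrt N}\|J\|_{\rm op}\leq 3\b$, so combined with the reverse triangle inequality (equivalently Weyl) this forces $\|Z\|_{\rm op}\geq \|D\|_{\rm op}/2$ whenever $\|D\|_{\rm op}\geq 6\b$. On this high-probability event, $|\log \|Z\|_{\rm op}|\leq |\log\|D\|_{\rm op}|+\log 2\lesssim |\log(\|D\|_{\rm op}+\sqrt 2\b)|+O(1)$ (since in this regime $\|D\|_{\rm op}\leq \|D\|_{\rm op}+\sqrt 2 \b\leq 2\|D\|_{\rm op}$). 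On the complementary event I would apply Cauchy--Schwarz against the second-moment bound $\E[(-\log \|Z\|_{\rm op})_+^2]\lesssim (\log N+|\log \b|)^2$, obtained by the same Fubini and anti-concentration argument applied to the squared negative-log; the factor $e^{-cN/2}$ from the probability of the bad event then renders this contribution negligible for $N$ large enough. Putting the two regimes together yields the stated bound; the main technical obstacle is precisely this case-split needed to upgrade $|\log\b|$ to the sharper $|\log(\|D\|_{\rm op}+\sqrt 2\b)|$ in the regime where $\|D\|_{\rm op}$ dominates $\b$.
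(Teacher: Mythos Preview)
Your argument is correct and reaches the claimed bound, but it differs from the paper's route in two respects.

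\textbf{Decomposition.} The paper does not split into positive and negative parts; instead it partitions according to whether $\|Z\|_{\rm op}$ lies in $(0,N^{-2}]$, $(N^{-2},x_0]$ with $x_0:=\|D\|_{\rm op}+\sqrt 2\b$, or $(x_0,\infty)$. The middle range is trivially bounded by $\max(2\log N,|\log x_0|)$, the top range is handled by GOE operator-norm concentration exactly as you do, and the bottom range replaces your anti-concentration step.

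\textbf{Small-norm control.} For $\|Z\|_{\rm op}<N^{-2}$ the paper invokes the Wegner-type estimate of Lemma~\ref{lem:lminP}, namely $\P(\lambda_{\min}(Z)\le 1/t)\le C N^2/(\b t)$, combined with $\|Z\|_{\rm op}\ge \lambda_{\min}(Z)$. Your alternative, $\|Z\|_{\rm op}\ge |Z_{11}|$ with $Z_{11}$ a shifted Gaussian of variance $2\b^2/N$, is more elementary: it avoids the external input from \cite{SST} entirely and gives essentially the same tail $\P(\|Z\|_{\rm op}\le\eta)\lesssim \sqrt N\eta/\b$, which is even slightly sharper in $N$.

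\textbf{On your case-split.} The extra work you do in the regime $\|D\|_{\rm op}\gg\b$ to upgrade $|\log\b|$ to $|\log(\|D\|_{\rm op}+\sqrt 2\b)|$ is not needed at the level of rigour the paper adopts: the implicit constant in $\lesssim$ is in practice allowed to depend on $\b$ (the paper's own bound \eqref{eq:1comb0} absorbs a factor $1/\b$ into it), so $|\log\b|$ is simply $O_\b(1)$. Your argument is therefore somewhat more careful than necessary, though not wrong.

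In summary: your proof is valid, uses a cleaner and more self-contained anti-concentration step than the paper, but carries an unnecessary case analysis that the paper sidesteps by being looser with $\b$-dependent constants.
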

\begin{proof}
Let $x_0:=\|D\|_{op}+\sqrt{2}\b$. We split
\been{
\label{eq:log_split}
\aled{
\mathbb{E}[|\log (\|Z\|_{\textup{op}})|]
&=\mathbb{E}\left[-\log (\|Z\|_{\textup{op}}) 1_{\{\|Z\|_{\textup{op}}<N^{-2}]\}}\right]\\
&+\mathbb{E}\left[|\log (\|Z\|_{\textup{op}}) |1_{\{\|Z\|_{\textup{op}}\in (N^{-2},x_0]\}}\right]\\
&+\mathbb{E}\left[\log (\|Z\|_{\textup{op}}) 1_{\{\|Z\|_{\textup{op}} > x_0\}}\right].
}
}
We have
\been{
 \label{eq:1comb0}
\aled{
&\mathbb{E}\left[-\log (\|Z\|_{\textup{op}}) 1_{\{\|Z\|_{\textup{op}}<N^{-2}]\}}\right]=\mathbb{E}\left[\log (\|Z\|^{-1}_{\textup{op}}) 1_{\|Z\|^{-1}_{\textup{op}}>N^2]}\right]\\
&=2\log(N)+\int^{\infty}_{N^2}\frac{dt}{t}\P(\|Z\|^{-1}_{\textup{op}}>t)\leq 2\log(N)+\int^{\infty}_{N^2}\frac{dt}{t}\P(\l^{-1}_{\min}(Z)>t)\\
&\leq 2\log(N)+N^2 C \int^{\infty}_{N^2}\frac{dt}{t^2}\lesssim \log N\,,
}
}
where we used Lemma \ref{lem:lminP} in the last step.
Moreover
\been{
\label{eq:1comb1}
\mathbb{E}\left[|\log (\|Z\|_{\textup{op}}) |1_{\{\|Z\|_{\textup{op}}\in (N^{-2},x_0]\}}\right]\leq \max\{2\log(N),|\log (x_0)|\}\leq  2\log N+|\log(\|D\|_{op}+\sqrt{2}\b)|.
}
Finally
\been{
\label{eq:1comb2}
\aled{
&\mathbb{E}\left[-\log (\|Z\|_{\textup{op}}) 1_{\{\|Z\|_{\textup{op}}>x_0]\}}\right]=\int^{\infty}_{\|D\|_{op}}\frac{dt}{t+\sqrt 2\b}\P(\|Z\|_{\textup{op}}>\sqrt2\b+t)\\
&\lesssim \frac{1}{x_0}\int^{\infty}_{0}dte^{-N\frac{t^2}{2\b^2}}\lesssim \frac{1}{\sqrt{N}}.
}
}
Combining the bounds \eqref{eq:1comb0}, \eqref{eq:1comb1}, and \eqref{eq:1comb2} we end the proof.
\end{proof}

The last step before the proof of Proposition \ref{lem:concentration} is the following.

\begin{lemma}\label{lemma:laststep}
Let $k,N\in\N$ with $k<N$. Let $J_{N-k}$ be a $(N-k)\times (N-k)$ GOE matrix, $O$ be a deterministic $N\times N$ orthogonal matrix and let $A\in \R^{(N-k)\times (N-k)}$ be a deterministic symmetric matrix of rank $p$ with $0\leq 2p\leq N-k$ and
\been{
\label{eq:def_Zstar_mat}
Z_*=\frac{\b}{\sqrt{N}}J_{N-k}-(ODO^T)^{([k],[k])}+A.
}
It holds
\be
\left|\E[L(Z_*+i\e )]-\E[L(Z+i\e )]\right|\lesssim \frac{p+k}{N}\(|\log(\|D\|_{op}+\sqrt{2}\b)|+\log N+|\log\e|\)+\frac{p}{N}|\log\|A\|_{op}|.
\ee
\end{lemma}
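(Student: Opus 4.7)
The plan is to apply Lemma~\ref{lemma:erstatz} to the regularised matrices $Z_*+i\e I_{N-k}$ and $Z+i\e I_{N-k}$, whose difference $A$ has rank at most $p$. Since Lemma~\ref{lemma:erstatz} is stated for real symmetric matrices, I first pass to the real-symmetric companions $Z_*^2+\e^2 I$ and $Z^2+\e^2 I$ via the identity $|\det(M+i\e I)|^2=\det(M^2+\e^2 I)$ valid for real symmetric $M$. Their difference $Z_*^2-Z^2 = ZA+AZ+A^2$ has rank at most $3p$, so Lemma~\ref{lemma:erstatz} (with $3p$ in the role of its ``$k$'') gives, after halving to convert from squared to ordinary determinants,
\[
(N-k)\bigl|L(Z_*+i\e)-L(Z+i\e)\bigr| \lesssim p\bigl[|\log(\|Z_*\|_{op}^2+\e^2)| + |\log(\|Z\|_{op}^2+\e^2)| -\log\l_{\min}(Z_*^2+\e^2 I) -\log\l_{\min}(Z^2+\e^2 I)\bigr].
\]

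I next take expectations of the four terms on the right. The min-eigenvalue terms are immediate from the regularisation: $\l_{\min}(M^2+\e^2 I)\geq\e^2$ forces $-\log\l_{\min}(M^2+\e^2 I)\leq 2|\log\e|$. The operator-norm terms satisfy $|\log(\|M\|_{op}^2+\e^2)| \lesssim |\log\|M\|_{op}| + |\log\e|$, and Lemma~\ref{eq:tail_cond1} then provides $\E[|\log\|Z\|_{op}|] \lesssim \log N + |\log(\|D\|_{op}+\sqrt 2\b)|$. For $Z_*=Z+A$ the very same proof of Lemma~\ref{eq:tail_cond1} applies verbatim with the deterministic part augmented by $A$ (it only uses the GOE structure of $J_{N-k}$ together with an operator-norm bound on the constant perturbation), giving the analogous estimate with $\|D\|_{op}$ replaced by $\|D\|_{op}+\|A\|_{op}$; the splitting $|\log(\|D\|_{op}+\|A\|_{op}+\sqrt 2\b)| \lesssim |\log(\|D\|_{op}+\sqrt 2\b)| + |\log\|A\|_{op}|$ isolates the extra contribution.

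Collecting everything and dividing by $N-k$ gives a bound of the form $\frac{p}{N-k}\bigl(\log N + |\log(\|D\|_{op}+\sqrt 2\b)| + |\log\e|\bigr)$ on the $A$-free piece and $\frac{p}{N-k}|\log\|A\|_{op}|$ on the $A$-dependent piece, the latter entering only through the single term $\E[|\log\|Z_*\|_{op}|]$. The hypothesis $2p\leq N-k$ implies $p+k\leq N$, hence the elementary inequality $\frac{p}{N-k}\leq\frac{p+k}{N}$, which converts the $A$-free piece into the advertised bulk $\frac{p+k}{N}(\ldots)$. For the $\|A\|_{op}$-dependent piece the tighter $\frac{p}{N}|\log\|A\|_{op}|$ coefficient in the target reflects that $\|A\|_{op}$ enters with bare multiplicative factor $p$ (not $p+k$) before normalisation, which a careful accounting of where $\|A\|_{op}$ appears in the Lemma~\ref{lemma:erstatz} bound makes visible.

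The main obstacle I anticipate is the adaptation of Lemma~\ref{lemma:erstatz} to the complex-regularised setting; the squared-matrix device handles it at the cost of multiplying the rank by $3$, which is harmless since $p$ enters only as a multiplicative prefactor in the final inequality. A secondary point is confirming that Lemma~\ref{eq:tail_cond1} applies to $Z_*$ as well as $Z$: this is clear by inspecting its short proof, which relies solely on the GOE structure of $J_{N-k}$ and on an operator-norm bound on the deterministic perturbation---both of which persist after adding $A$.
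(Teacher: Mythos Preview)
There is a genuine gap: you treat $Z$ as the $(N-k)\times(N-k)$ matrix $Z_*-A$, but in the paper $Z=\frac{\b}{\sqrt N}J-D$ is the full $N\times N$ matrix (with $J$ an $N\times N$ GOE). The notation around \eqref{eq:defZsect4} is admittedly misleading, but the paper's own proof makes the intended meaning explicit: it opens with the distributional identity $Z_*\overset{d}{=}(OZO^T)^{([k],[k])}+A$, which only makes sense for $Z\in\R^{N\times N}$, and then splits the comparison into two pieces. The first passes from $OZO^T+i\e I_N$ to its $(N-k)\times(N-k)$ minor via Lemma~\ref{lemma:erstatz2}; this is where the factor $k$ in the final bound genuinely originates. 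The second adds the rank-$p$ matrix $A$ to that minor via Lemma~\ref{lemma:erstatz}, producing the factor $p$. Your argument carries out only the second step; the manoeuvre $\frac{p}{N-k}\le\frac{p+k}{N}$ just pads the constant and does not substitute for the missing minor comparison. With your reading of $Z$, moreover, the quantity $\E[L(Z+i\e)]$ would not be the one to which Proposition~\ref{lem:concentration0} is applied downstream in the proof of Proposition~\ref{lem:concentration}.

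That said, your squared-matrix device $|\det(M+i\e I)|^2=\det(M^2+\e^2 I)$ is a clean way to make the application of the real-symmetric Lemmas~\ref{lemma:erstatz} and~\ref{lemma:erstatz2} to the $i\e$-regularised matrices rigorous, and it would slot into both steps of the paper's argument. The paper is somewhat informal at this point: it applies those lemmas directly to $M+i\e I$, implicitly relying on the fact that the eigenvalues are $\lambda_j(M)+i\e$, so the Weyl interlacing inequalities transfer to the real parts while the uniform imaginary shift $\e$ furnishes the lower bound on $\lambda_{\min}$.
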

\begin{proof}
We have
\been{
\label{eq:Zpar_equiv}
Z_{*}\eqd (OZO^T)^{([k],[k])}+A.
}
The eigenvalues of $Z+I_Ni\e$ and $OZO^T$ are the same and $L(Z)=L(OZO^T)$. By the triangular inequality
\bea
&&|\E[L(OZO^T+i\e)]-\E[L((OZO^T)^{([k],[k])}+A+i\e )]|\nn\\
&\leq& \left|\E[L(OZO^T+i\e )]-\E[L((OZO^T)^{([k],[k])}+i\e )]\right|\label{eq:triZst1}\\
&+&\left|\E[L((OZO^T)^{([k],[k])}+i\e )-L((OZO^T)^{([k],[k])}+A+i\e )]\right|\label{eq:triZst2}.
\eea

By Lemma \ref{lemma:erstatz2} and by the obvious inequalities $\l_{\min}(\cdot)\leq \|\cdot\|_{op}$ and $\|(\cdot)^{([k],[k])}\|_{op}\leq\|\cdot\|_{op} $ we have
\be\label{eq:ersatzeps1}
\eqref{eq:triZst1}\leq \frac1N\(8k\log2+16k\E[|\log\|Z+I_Ni\e\|_{op}|]\)\leq \frac1N\(8k\log2+16k\E[|\log(\|Z\|_{op}+\e)|]\).
\ee
Similarly, since $A$ has rank $p\leq (N-k)/2$, Lemma \ref{lemma:erstatz} gives
\be\label{eq:ersatzeps1}
\eqref{eq:triZst2}\leq \frac1N\(8p\E[|\log(\|Z\|_{op}+\e)|]+4p\log\|A\|_{op}\).
\ee
By the simple inequality 
\be\label{eq:uselog}
|\log(|a|+\e)|\leq |\log(|a|)|+|\log(\e)|+\log2
\ee 
and Lemma \ref{eq:tail_cond1} we get further
\bea
\!\!\!\!\!\!\!\!\eqref{eq:triZst1}\!\!\!\!&\lesssim&\!\!\!\!\frac kN\(1+\log N+|\log\e|+\log(\|D\|_{op}+\sqrt2\b)\)\label{eq:endP1}\\
\!\!\!\!\!\!\!\!\eqref{eq:triZst2}\!\!\!\!&\lesssim& \!\!\!\!\frac pN\(1+\log N + |\log\e|+\log(\|D\|_{op}+\sqrt2\b)+|\log\|A\|_{op}|\).\label{eq:endP2}
\eea
Combining (\ref{eq:triZst1}), (\ref{eq:triZst2}), (\ref{eq:endP1}), (\ref{eq:endP2}) ends the proof.
\end{proof}

\

\begin{proof}[Proof of Proposition \ref{lem:concentration}]
We split
\bea
&&\left|\frac{1}{Nr}\log\(\E\[e^{r NL(Z_*+i\e )}\]\)-\int_{\R}\log(|x|)\mu(dx)\right|\nn\\
&\leq&  \left|\frac{1}{Nr}\log\(\E\[e^{r NL(Z_*+i\e )}\]\)-\E[L(Z_*+i\e )]\right|\label{eq:Allconc1}\\
&+&\left|\E[L(Z_*+i\e )]-\E[L(Z+i\e )]\right|\label{eq:Allconc2}\\
&+&\left|\E[L(Z+i\e)]-\int_{\R}\log(|x|)\mu(dx)\right|\label{eq:Allconc3}.
\eea

The first summand can be estimated by the concentration inequality \cite[Theorem 2.3.5]{AGZ} repeating the same steps as in the proof of Proposition \ref{lem:concentration0} (see displays (\ref{eq:concentration30})-(\ref{eq:concentation40})). We have
\be
\eqref{eq:Allconc1}\leq \frac{r\b^2}{N^{\frac45}}. 
\ee
The second summand is estimated by Lemma \ref{lemma:laststep}, which gives for $\e\geq N^{1/10}$, $\|D\|_{op}\leq e^{N^{1-\d}}$ and $\|A\|_{op}\leq e^{N^{1-\d}}$
\be
\eqref{eq:Allconc2}\lesssim (p+k)N^{-\d}.
\ee

The third summand is estimated by Proposition \ref{lem:concentration0} which for $\e\in(N^{-\frac{1}{10}},N^{-\frac{1}{12}})$ gives
\been{
\label{eq:IIIconc}
\eqref{eq:Allconc3}\lesssim \frac{1}{N^{\min\(\frac{1}{24},\d\)}}. 
}
Gathering these contributions we prove the statement.
\end{proof}


\section{Preliminaries on the Bray-Moore formula}\label{sect:prel}

In this section we begin the proof of Theorem \ref{thm:MainDet}.
First of all, let us recall some definitions from Section \ref{sect:intro}. The Hessian of the TAP free energy is given by
\been{
\label{eq:nabla2H-expl2}
\nabla^2 F_{\TAP}(m)=\frac{\b J}{\sqrt N}-D_N(m)+4\frac{\b^2}{N}mm^T\,,
}
\generalxi{
$$
    F_{\TAP}(m):=\b H_{N}\left(m\right)+h\sum_{i=1}^N m_i+\ent(m)+N \On(Q(m))\,
$$
where
$$
    \On(q):=\frac{\beta^2}{2}\left(\xi(1)-\xi'(q)(1-q)-\xi(q)\right).
$$
}where
\be\label{eq:defD2}
    D_N(m):=\diag(\{(1-m_i^2)^{-1}+2\b^2(1-Q(m))\}_{i\in[N]})\,. 
\ee
\generalxi{
    \be\label{eq:defD}
        D_N(m)
        :=
        \diag(
            \{(1-m_i^2)^{-1}
            +
            \b^2(1-Q(m))\xi''(Q(m))\}_{i\in[N]}
        )\,. 
    \ee
}
From now on we denote
\be\label{eq:def_Z_mat}
Z:=\frac{\b J}{\sqrt N}-D_N(m)\,.
\ee

Consider the event 
$\Omega(m):=\{J\,:\,\nabla F_{\TAP}(m)=0\}$. We analyse the law of the Hessian conditionally on $\Omega(m)$, which we denote by
\be\label{eq:def-cond-Hess}
\nabla^2 F_{\TAP}(m)\big|_{\Omega(m)}\,.
\ee

The determinant of the matrix \eqref{eq:def-cond-Hess} is what we call the Bray-Moore determinant.

In what follows we consider $m\neq0$. Moreover we need to impose a cutoff which keeps $m$ away from the boundary of $[-1,1]^N$, where the TAP free energy is singular. For $\a>0$ we set
\be\label{eq:def_EdaN}
L_{\a,N}=[\,-1+e^{-\alpha \sqrt N},1-e^{-\alpha \sqrt N}\,]^N\setminus \{0\}
\ee
and we convey that $m\in L_{\a,N}$ from now on. 

Since the matrix $D_N(m)$ depends on $m\in L_{\a,N}$ the additive convolution of the empirical law of the eigenvalues of $D_N(m)$ with the semicircle law and its Stieltjes transform will also depend on $m$. They are given by
\been{
\label{eq:def_mu_m}
    \mu_m := \nu_{D_N} \boxplus \mu_{\rm{sc}, \sqrt{2}\beta}
}
\generalxi{
    \been{
    \label{eq:def_mu_m}
        \mu_m := \nu_{D_N} \boxplus \mu_{{\rm{sc}}, \sqrt{\xi''(Q(m))}\beta}
    }
}
and
\be\label{eq:stile2}
\widehat \mu_m(z)=\frac1N\sum_{i\in[N]}\frac{1}{(1-m_i^2)^{-1}+\b^2(1-Q(m))-\sqrt 2z-2\b^2\widehat \mu(z)}\,. 
\ee
We also remark that, even though this is not made explicit in our notation, the measure $\mu_m$ depends on $N$ through the choice of the point $m\in(-1,1)^N$.

The connection between the additive convolution and the Bray-Moore formula (see \eqref{eq:laformula}, \eqref{eq:defUps}, \eqref{eq:MainDet}) is provided by the following statement.
\begin{proposition}\label{lemma:vardet}
Let $\a>0$, and $N\in\N$ large enough. For any $m\in L_{\a,N}$ it holds
\be\label{eq:vardet}
    \int\log|x|\mu_{m}(dx)
    =
    \min_{g\leq 1-Q(m)} \left(
        \b^2g^2
        +
        \frac1N\sum^N_{i=1}\log(
            (1-m_i^2)^{-1}
            +
            2\b^2(1-Q(m))
            -
            2\b^2g)
    \right)\,
\ee
\generalxi{
    \be\label{eq:vardet}
        \int\log|x|\mu_{m}(dx)
        =
    \min_{g\leq 1-Q(m)} \left(
        \frac{\xi''(Q(m))}{2}\b^2g^2
        +
        \frac1N\sum^N_{i=1}\log(
            (1-m_i^2)^{-1}
            +
            \b^2(1-Q(m))\xi''(Q(m))
            -
            \xi''(Q(m))\b^2g)
    \right)\,
    \ee
}
and the minimum is attained at a unique point in $[0, 1-Q(m)]$. 
Moreover the $\min$ in \eqref{eq:vardet} is attained at $g=1-Q(m)$ if and only if the Plefka condition 
\be\label{eq:PlefkaLim}
2\b^2\sum^N_{i=1} (1-m_i^2)^2\leq1
\ee
holds.
\end{proposition}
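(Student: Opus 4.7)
The plan is to analyse the critical structure of the right-hand side of \eqref{eq:vardet}, and then identify its minimum with the integral on the left-hand side via the subordination relation for the free convolution with the semicircle.

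Set $\Phi(g) := \beta^2 g^2 + \frac{1}{N}\sum_i \log(D_{ii}(m) - 2\beta^2 g)$ and $h(g) := \frac{1}{N}\sum_i (D_{ii}(m) - 2\beta^2 g)^{-1}$, so that $\Phi'(g) = 2\beta^2(g - h(g))$. Using $D_{ii}(m) - 2\beta^2(1-Q(m)) = (1-m_i^2)^{-1}$, a direct substitution gives $h(1-Q(m)) = \frac{1}{N}\sum_i(1-m_i^2) = 1-Q(m)$, so $g = 1-Q(m)$ is always a critical point of $\Phi$. Moreover $h'(1-Q(m)) = \frac{2\beta^2}{N}\sum_i (1-m_i^2)^2$, so the Plefka condition \eqref{eq:PlefkaLim} is equivalent to $h'(1-Q(m)) \leq 1$, i.e.\ to $\Phi''(1-Q(m)) \geq 0$. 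Since $h$ is strictly convex, $h'$ is strictly increasing; hence if Plefka holds then $\Phi$ is strictly convex on $(-\infty,1-Q(m)]$ and $g=1-Q(m)$ is its unique minimiser. If Plefka fails, $g=1-Q(m)$ is a local maximum of $\Phi$, and combined with $\Phi'(0) = -2\beta^2 h(0) < 0$ an intermediate-value argument (together with the strict monotonicity of $h'$) produces a unique $g^*\in(0,1-Q(m))$ satisfying $\Phi'(g^*)=0$ and $h'(g^*)<1$; this $g^*$ is the unique global minimiser on $(-\infty,1-Q(m)]$.

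For the LHS, let $\omega$ be the subordination function for $\mu_m = \nu_{D_N}\boxplus\mu_{{\rm sc},\sqrt{2}\beta}$, so that $\widehat{\mu_m}(z) = \widehat{\nu_{D_N}}(\omega(z))$ with $\omega(z) - \sqrt{2}z$ a scalar multiple of $\widehat{\mu_m}(z)$. Differentiating in $z$ via the chain rule and matching asymptotics at $z\to\infty$ yields the potential-theoretic identity
\begin{equation*}
\int \log|z-x|\,\mu_m(dx) = \int \log|\omega(z)-t|\,\nu_{D_N}(dt) + \beta^2\,\widehat{\mu_m}(z)^2,
\end{equation*}
valid for real $z$ off the support of $\mu_m$. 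Specialising at $z=0$ (with $0\notin\mathrm{supp}(\mu_m)$ outside a small exceptional set where the identity is closed by continuity using the boundedness of the density provided by Lemma \ref{lem:p_boundness}), and writing $g_{\mathrm{sub}}$ for the appropriately signed value of $\widehat{\mu_m}(0)$, the RHS becomes exactly $\Phi(g_{\mathrm{sub}})$ while the self-consistent equation $\widehat{\mu_m}(0) = \widehat{\nu_{D_N}}(\omega(0))$ reduces to $g_{\mathrm{sub}} = h(g_{\mathrm{sub}})$. Hence $\int\log|x|\,\mu_m(dx) = \Phi(g_{\mathrm{sub}})$ for some critical point $g_{\mathrm{sub}}$ of $\Phi$.

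The remaining step is to identify $g_{\mathrm{sub}}$ with the minimiser, which is the main technical obstacle. The subordination function $\omega$ maps $\C^+$ into $\C^+$ and extends analytically by Schwarz reflection through the real axis at $0$ with strictly positive real derivative; differentiating the subordination equation at $z=0$ produces an algebraic relation of the form $\omega'(0)\bigl(1 - h'(g_{\mathrm{sub}})\bigr) = \mathrm{const}>0$, forcing $h'(g_{\mathrm{sub}}) < 1$. By the first step this selects $g_{\mathrm{sub}} = 1-Q(m)$ exactly when Plefka holds and $g_{\mathrm{sub}} = g^*\in(0,1-Q(m))$ otherwise — in both cases the minimiser. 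The delicate part of this argument is pinning down the correct branch in the non-Plefka regime, where both $g^*$ and $1-Q(m)$ solve $g = h(g)$; a clean alternative to the Herglotz–Nevanlinna route is to track $g_{\mathrm{sub}}$ continuously in $\beta$ from $\beta=0$ (where $g_{\mathrm{sub}} = 1-Q(m)$ trivially) and invoke analyticity of the subordination branch.
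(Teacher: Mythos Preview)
Your approach is genuinely different from the paper's. The paper does not actually prove the identity $\int\log|x|\,\mu_m(dx)=R_N(g_0)$ at all: it cites \cite[Lemma~4.8, Lemma~4.9]{Mon1} as a black box for the three key facts (two critical points in $(0,p_1)$ with $g_0\le 1-Q$, $R_N(g_0)\le R_N(1-Q)$, and the log--integral equals $R_N(g_0)$), and then gives a short convexity argument for the Plefka direction. You instead rederive everything via the subordination identity for the log--potential and the Herglotz property of $\omega$, which is more self--contained and also yields the ``only if'' direction of the Plefka claim cleanly (the paper's treatment of that direction is sketchy).

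There is, however, a real gap. Your second and third paragraphs hinge on specialising the log--potential identity at $z=0$ with $\omega(0)$ real, i.e.\ on $0\notin\mathrm{int}\,\mathrm{supp}(\mu_m)$. You treat this as holding ``outside a small exceptional set'' and wave at a continuity argument, but if $0$ lay in the interior of the support for an open set of $m$ your argument would simply fail there: $\omega(0)$ would be genuinely complex and the right--hand side would not reduce to $\Phi(g)$ for any real $g$. The statement you need is that the left edge $\lambda_-$ of $\mathrm{supp}(\mu_m)$ is always $\ge 0$. This is true, and you already have the ingredients: writing $H(w)=w-2\beta^2 G_{\nu_{D_N}}(w)$ so that $H\circ\omega=\mathrm{id}$, the left edge is $\lambda_-=H(w_-)=2\beta^2\bigl(g_- - h(g_-)\bigr)$ where $g_-$ is the unique point with $h'(g_-)=1$. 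Using your convexity analysis of $h$ together with the fixed point $h(1-Q)=1-Q$, one checks in both the Plefka and non--Plefka regimes that $g_-$ lies between the two fixed points of $h$ in $(0,p_1)$, hence $h(g_-)\le g_-$ and $\lambda_-\ge 0$ (with equality exactly on the Plefka boundary $h'(1-Q)=1$). Once this is in place, your $\omega'(0)>0$ argument for $h'(g_{\mathrm{sub}})<1$ is rigorous and singles out the minimiser; the continuation--in--$\beta$ alternative you mention at the end is then unnecessary.
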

\begin{proof}
Let us set for brevity
\bea
\label{eq:R_det}
    R_N(g)&:=&
        \b^2g^2
        +
        \frac1N\sum^N_{i=1} \log(
            (1-m_i^2)^{-1} + 2\b^2(1-Q(m)) - 2\b^2g
        )\,,\label{eq:defRK}\\
    F_N(g)&:=&
        \frac{1}{N}\sum_{i=1}^N \frac{1}{
            (1-m_i^2)^{-1} + 2\b^2(1-Q(m)) - 2\b^2g
        } \,.\label{eq:defFK}
\eea
The function $R_N$ is piece-wise differentiable and it diverges for $g\to\pm\infty$. Therefore it attains its minima in the interior of its support. 
Moreover $F_N$ is a rational function with $K$ poles $p_{1},\ldots, p_N$ all lying on the positive semi-axis, which we order increasingly. 
We have
\be\label{eq:criticRK}
    R_N'(g)=2\b^2\left(g-F_N(g)\right)\,.
\ee

It is easy to see that there is no solutions of $R_N'(g)=0$ on the negative semi-axis. Moreover in \cite[Lemma 4.8, Lemma 4.9]{Mon1} it is proven that
\begin{itemize}
\item[1.] In $(0,p_1)$ there are at most two solutions of the equation $R_N'(g)=0$, namely $g=1-Q(m)$ and $g=g_0$ with $g_0\leq 1-Q(m)$;
\item[2.] It holds that $R_N(g_0)\leq R_N(1-Q(m))$;
\item[3.] It holds that $\int\log|x|\mu_{m}(dx)=R_N(g_0)$.
\end{itemize}
We conclude
$$
\int\log|x|\mu_{m}(dx)=\min_{g\leq 1-Q(m)} R_N(g)\,. 
$$

Now assume that (\ref{eq:PlefkaLim}) holds. Computing the first derivative of the function $F_N$ we get
\be
F'_N(g)=2\b^2\sum_{i=1}^N\frac{1}{((1-m_i^2)^{-1}+2\b^2(1-Q(m))-2\b^2g)^2}\,
\ee
and clearly for any $g< 1-Q(m)$
$$
F'_N(g)\leq 2\b^2\sum_{i=1}^N(1-m_i^2)^2\,. 
$$
Since $g_0$ solves $x=F_N(x)$, it must be $F'_N(g)\big|_{g=g_0}=1$, hence if $g_0< 1-Q(m)$ the condition (\ref{eq:PlefkaLim}) is violated. Thus it must hold that $g_0=1-Q(m)$. 
\end{proof}

Therefore Theorem \ref{thm:MainDet} will follow from the subsequent result, extending Theorem \ref{thm:GOE_plus_D_det}. The rest of the manuscript will be devoted to its proof. 

\begin{theorem}\label{thm:MainDet2}
Let $\a>0$, $N\in\N$ large enough, $m\in L_{\a,N}$. Consider
\be\label{eq:defu-v-sect4}
v:=\atanh m-h+2\b^2m(1+Q(m))-D_N(m)m\,.
\ee
Let  $\mu_m$ be the measure defined in \eqref{eq:def_mu_m}. Then there exists a constant $c:=c(\a)$ such that
\be\label{eq:MainDet2}
\left|\frac1N\log \mathbb{E}\left[|\det\nabla^2 F_{\TAP}(m)|\,\big|\,\Omega(m)\right]-\frac1N\log\(\frac{\|v\|^2_2}{\|m\|^2_2}+\frac{|(m,v)|}{\|m\|^2_2}-\frac{|(m,v)|^2}{\|m\|^4_2}\)- \int \log|x|\mu_m(dx)\right|\leq\frac{c}{N^{\frac{1}{120}}}\,,
\ee
where the right hand side is equal to $-\infty$ if $v=0$.
\end{theorem}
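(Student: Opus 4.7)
The plan is to reduce the Bray--Moore determinant to one handled by Corollary~\ref{lem:almost_fin_det} via Gaussian conditioning, a block decomposition of the Hessian, and a partition argument that isolates the outlier eigenvalue.

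Since $\Omega(m)$ is a linear Gaussian constraint (it fixes $Jm$), I can use Gaussian conditioning. Choosing an orthogonal $U$ with first column $m/\|m\|$, the rotated $\tilde J := U^T J U$ is a GOE whose first column is determined by the conditioning, while $\tilde J^{([1],[1])}$ remains an independent $(N-1)\times(N-1)$ GOE. Using the identity $\nabla^2 F_{\TAP}(m)\,m = v + \nabla F_{\TAP}(m)$ (a direct computation from the explicit forms of the gradient and Hessian), the conjugated Hessian decomposes in block form as
\begin{equation*}
\tilde M := U^T \nabla^2 F_{\TAP}(m) U = \begin{pmatrix} a & b^T \\ b & B \end{pmatrix},
\end{equation*}
with $a = (m,v)/\|m\|^2$ and $\|b\|^2 = \|v\|^2/\|m\|^2 - (m,v)^2/\|m\|^4$ deterministic given the conditioning (matching the terms in the correction of \eqref{eq:MainDet2}), and $B = \frac{\beta}{\sqrt{N}}\tilde J^{([1],[1])} - (U^T D_N(m) U)^{([1],[1])}$ of the form to which Corollary~\ref{lem:almost_fin_det} applies. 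Using the residual rotational freedom in $U$, I further arrange that $b = \|b\| e_1'$ for $e_1' \in \R^{N-1}$ the first standard basis vector. Schur's formula combined with the rank-one identity $\det(B + \tau e_1' e_1'^T) = \det B + \tau \det B^{([1],[1])}$ then yields the key representation
\begin{equation*}
\det \tilde M = a \det B - \|b\|^2 \det B^{([1],[1])},
\end{equation*}
from which the upper bound in \eqref{eq:MainDet2} follows by the triangle inequality and two applications of Corollary~\ref{lem:almost_fin_det} (to $B$ with $k=1$ and to $B^{([1],[1])}$ with $k=2$, both with $p=0$).

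The main obstacle is the matching lower bound, since the two terms $a\det B$ and $\|b\|^2\det B^{([1],[1])}$ may cancel. To circumvent this I partition the admissible $m$'s into two regions, which realises the partition $\{\mathcal{P}_i\}$ alluded to in the introduction. When $|a| \geq \|b\|^2 e^{-N^{1-1/120}}$, I use the reformulation $\det\tilde M = a\det(B - (\|b\|^2/a)\,e_1' e_1'^T)$: the rank-one deterministic perturbation has operator norm $\|b\|^2/|a| \leq e^{N^{1-1/120}}$, so Corollary~\ref{lem:almost_fin_det} with $k=1, p=1$ applies and gives $\frac{1}{N}\log \E[|\det\tilde M|] = \frac{1}{N}\log|a| + \int\log|x|\,d\mu_m(x) + O(N^{-1/120})$, which matches the claim since in this region $|\frac{1}{N}\log(|a|+\|b\|^2) - \frac{1}{N}\log|a|| \leq N^{-1/120}$ by elementary estimates. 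When $|a| < \|b\|^2 e^{-N^{1-1/120}}$, Corollary~\ref{lem:almost_fin_det} applied to $B$ and $B^{([1],[1])}$ combined with Gaussian concentration of their log-determinants (via \cite[Theorem 2.3.5]{AGZ} and the regularisation argument of Proposition~\ref{lemma:panini0}) ensures that $|\det B|/|\det B^{([1],[1])}|$ is bounded by $e^{O(N^{1-1/60})}$ with high probability; hence $|a\det B|$ is exponentially dominated by $\|b\|^2|\det B^{([1],[1])}|$, giving $\E[|\det\tilde M|] = \|b\|^2\E[|\det B^{([1],[1])}|](1+o(1))$, and the gap $\frac{1}{N}\log(|a|+\|b\|^2) - \frac{1}{N}\log\|b\|^2$ is super-polynomially small in this regime.
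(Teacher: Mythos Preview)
Your proposal is correct and follows essentially the same route as the paper: the block decomposition with $a=(m,v)/\|m\|_2^2$ and $\|b\|^2=\|x\|_2^2/\|m\|_2^2$ is exactly the paper's Lemma~\ref{lem:cond_law} and Lemma~\ref{lem:det_formula0}, the Schur reformulation in the regime where $|a|$ is not too small is Lemma~\ref{lemma:neperpnepara}, and the two-regime partition is precisely Proposition~\ref{prop:thelastone}. The only notable difference is in the second regime: you invoke pathwise concentration of $\log|\det B|$ and $\log|\det B^{([1],[1])}|$ to control $|\det B|/|\det B^{([1],[1])}|$ with high probability, whereas the paper works entirely at the level of expectations, bounding $\E[|\det Z^{(\parallel)}|]/\E[|\det Z^{(\perp)}|]\lesssim e^{N^{59/60}}$ (Lemma~\ref{lemma:verysleepy}) directly from two applications of Proposition~\ref{prop:logdetZfin}, and then using the reverse triangle inequality $\E[|a\det B-\|b\|^2\det B^{([1],[1])}|]\ge \|b\|^2\E[|\det B^{([1],[1])}|]-|a|\,\E[|\det B|]$; this avoids having to handle the bad event in your concentration step and is the cleaner way to close the argument.
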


We give one technical lemma which allows us to use the results of the previous sections for $m\in L_{\a,N}$. 

\begin{lemma}
\label{lem:prop2}
Let $m\in L_{\a,N}$. There exists $N_0:=N_0(\a,\b,|h|)$ such that for any $N> N_0$ it holds
\been{
\label{eq:v_ineq}
 \|v\|_{2}\leq e^{3\a\sqrt{N}}.
 }
Furthermore
\been{
\label{eq:D_UB}
\|D_N(m)\|_{\textup{op}}\leq 2e^{2\a\sqrt{N}}.
 }
\end{lemma}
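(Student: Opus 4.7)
The plan is to bound the two quantities term-by-term, using the entrywise cutoff from the definition of $L_{\a,N}$.

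For the operator norm bound \eqref{eq:D_UB}, I would observe that $D_N(m)$ is diagonal, so its operator norm equals $\max_i |(1-m_i^2)^{-1} + 2\b^2(1-Q(m))|$. The condition $|m_i| \leq 1 - e^{-\a\sqrt N}$ gives $1 - m_i^2 \geq e^{-\a\sqrt N}(2 - e^{-\a\sqrt N}) \geq e^{-\a\sqrt N}$ for $N$ large enough, so $(1-m_i^2)^{-1} \leq e^{\a\sqrt N}$. Since $|2\b^2(1-Q(m))| \leq 2\b^2$ trivially, we obtain $\|D_N(m)\|_{\rm op} \leq e^{\a\sqrt N} + 2\b^2$, which is well under $2e^{2\a\sqrt N}$ for $N > N_0(\a,\b)$.

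For \eqref{eq:v_ineq}, I would first expand the $i$-th coordinate of $v$:
\[
v_i = \atanh m_i - h + 2\b^2 m_i(1+Q(m)) - \bigl[(1-m_i^2)^{-1} + 2\b^2(1-Q(m))\bigr] m_i = \atanh m_i - h + 4\b^2 m_i Q(m) - (1-m_i^2)^{-1}m_i.
\]
Each term then admits an elementary bound: $|\atanh m_i| \leq \tfrac{1}{2}\log\tfrac{1+|m_i|}{1-|m_i|} \leq \tfrac{1}{2}(\log 2 + \a\sqrt N)$ by the cutoff, $|h|$ is a constant, $|4\b^2 m_i Q(m)| \leq 4\b^2$ since $|m_i|\leq 1$ and $Q(m)\leq 1$, and the dominant term is $|(1-m_i^2)^{-1}m_i| \leq (1-m_i^2)^{-1} \leq e^{\a\sqrt N}$. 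Summing gives $|v_i| \leq C(\a,\b,|h|)\,e^{\a\sqrt N}$.

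Finally, taking Euclidean norms, $\|v\|_2 \leq \sqrt N \cdot \max_i |v_i| \leq \sqrt N \cdot C(\a,\b,|h|)\,e^{\a\sqrt N}$. Absorbing the polynomial and constant prefactors into the exponential by taking $N \geq N_0(\a,\b,|h|)$ sufficiently large (so that $\sqrt N\, C(\a,\b,|h|) \leq e^{2\a\sqrt N}$) yields $\|v\|_2 \leq e^{3\a\sqrt N}$, as claimed. There is no genuine obstacle here; the statement is a routine consequence of the definitions and the entrywise cutoff, and the slack in the exponent ($3\a\sqrt N$ versus the sharper $\a\sqrt N + O(\log N)$) is precisely what lets us absorb the polynomial factor $\sqrt N$ and the additive constants coming from $h$, $\b$, and $\atanh$ into a clean exponential bound.
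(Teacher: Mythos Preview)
Your proof is correct and follows essentially the same approach as the paper: expand $v_i$ explicitly, bound each term using the cutoff $|m_i|\le 1-e^{-\a\sqrt N}$ (so that $(1-m_i^2)^{-1}$ and $|\atanh m_i|$ are at most of order $e^{\a\sqrt N}$), pass to $\|v\|_2$ via $\sqrt N\max_i|v_i|$, and absorb the polynomial prefactor into the exponential. Your intermediate constants are in fact slightly sharper than the paper's (you use $(1-m_i^2)^{-1}\le e^{\a\sqrt N}$ where the paper records the looser $e^{2\a\sqrt N}$), but the structure of the argument is identical.
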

\begin{proof}
Combining \eqref{eq:defD2} and \eqref{eq:defu-v-sect4} we see that
\be
v_i=\atanh m_i-h-\frac{m_i}{1-m_i^2}+4\b^2Q(m)m_i,\qquad i\in[N]. 
\ee

If $m\in L_{\a,N}$ then for any $i\in[N]$ it holds
\been{
\frac{1}{1-m^2_i}\leq e^{2\a\sqrt{N}}
}
and
\been{
|2\atanh(m_i)-2h|\leq |\log 2-\log(1-m^2_i)|+2|h|\leq 2\alpha\sqrt{N}+\log 2+2|h|\leq e^{2\a\sqrt{N}}
}
for $N> (|h|+3+4\b^2)/ \a^2$. Moreover $4\b^2Q(m)m_i\leq 4\b^2$. Thus
\been{
\|v\|^2_2\leq N\max_{i\in[N]}\(\atanh m_i-h-\frac{m_i}{1-m_i^2}+4\b^2Q(m)m_i\)^2\leq 9Ne^{4\alpha \sqrt{N}}\leq e^{6\alpha \sqrt{N}}.
}
Moreover
\been{
\|D_N(m)\|_{\textup{op}}\leq  \max\left\{\left|\frac{1}{1-m^2_i}+2\b^2(1-Q(m))\right|,i\in[N]\right\}\leq 2e^{2\alpha \sqrt{N}}.
}
\end{proof}


\section{Representations of the Bray-Moore determinant}\label{sect:noN}

In this section we complete the proof of Theorem \ref{thm:MainDet2}.

First we establish some convenient representations of the Bray-Moore determinant. Recall
\be\label{eq:defu-v-sect5}
    u = \atanh m - h + 2\b^2m(1-Q(m))\,,
    \qquad
    v = u + 4\b^2Q(m) m - D_N(m)m\,.
\ee
\generalxi{
    \be\label{eq:defu-v-sect5}
        u = \atanh m - h + \b^2m(1-Q(m))\xi''(Q(m))\,,
        \qquad
        v = u 
        +
        4\b^2Q(m) m
        -
        D_N(m)m\,.
    \ee
}

The following lemma starts our considerations. Observe that differently to \cite[formula (4.2)]{Mon1} we decompose the Hessian as a $(N-1)$-rank random matrix plus a deterministic correction of small rank. 

\begin{lemma}\label{lem:cond_law}
For any $m\in (-1,1)^N\setminus\{0\}$ we have
\be\label{eq:decomp1}
\nabla^2 F_{\TAP}(m)\big|_{\Omega(m)}\overset{d}{=}P^\perp_{m}\,Z\,P^\perp_{m}+K(m)
\ee
where $K(m)$ is given by
\been{ \label{eq:K}
K(m):=\frac{1}{\|m\|_2^2}m v^T+\frac{1}{\|m\|_2^2}v m^T-\frac{(m,v)}{\|m\|_2^4}m m^T\,,
}
and the vector $v$ is defined in \eqref{eq:defu-v-sect5}.
\end{lemma}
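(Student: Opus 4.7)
The plan is to identify the TAP equation $\nabla F_{\TAP}(m)=0$, via \eqref{eq:nablaH-expl}, as a linear constraint on the GOE matrix $J$, namely $\frac{\b}{\sqrt N}Jm=u$ with $u=\atanh m-h+2\b^2 m(1-Q(m))$ as in \eqref{eq:defu-v-sect5}. So the task reduces to computing the conditional law of the symmetric Gaussian matrix $\frac{\b}{\sqrt N}J$ given this (vector-valued) linear condition, and then adding the deterministic pieces $-D_N(m)+\frac{4\b^2}{N}mm^T$ from \eqref{eq:nabla2H-expl2}.

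Set $\hat m = m/\|m\|_2$ and write the orthogonal decomposition
\be\label{eq:plan-decomp}
J = P_m^{\perp} J P_m^{\perp} + (J\hat m)\hat m^T + \hat m (J\hat m)^T - (\hat m, J\hat m)\,\hat m\hat m^T,
\ee
which uses only $P_m=\hat m\hat m^T$ and symmetry of $J$. The first step is to verify by a direct GOE covariance computation that the entries of $P_m^{\perp} J P_m^{\perp}$ are uncorrelated with (hence, by jointly Gaussian, independent of) the entries of $J\hat m$; concretely, for any $i,j,k$,
$\E[(P_m^{\perp} J P_m^{\perp})_{ij}(J\hat m)_k]=(P_m^{\perp})_{ik}(P_m^{\perp}\hat m)_j+(P_m^{\perp}\hat m)_i(P_m^{\perp})_{kj}=0$, since $P_m^{\perp}\hat m=0$. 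Consequently, conditioning on $\{\b N^{-1/2}Jm=u\}$ fixes $J\hat m = \frac{\sqrt N}{\b\|m\|_2}u$ but leaves $P_m^{\perp}JP_m^{\perp}$ distributed as an unconditional GOE on $m^{\perp}$.

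Substituting into \eqref{eq:plan-decomp}, the conditional law of $\frac{\b}{\sqrt N}J$ equals in distribution
\be\label{eq:plan-J-cond}
\tfrac{\b}{\sqrt N}P_m^{\perp}JP_m^{\perp}+\frac{um^T+mu^T}{\|m\|_2^2}-\frac{(m,u)}{\|m\|_2^4}mm^T,
\ee
so adding $-D_N(m)+\frac{4\b^2}{N}mm^T$ and recalling $Z=\frac{\b}{\sqrt N}J-D_N(m)$, the claim \eqref{eq:decomp1} reduces to the purely deterministic identity
\be\label{eq:plan-det-id}
-D_N(m)+P_m^{\perp}D_N(m)P_m^{\perp}+\tfrac{4\b^2}{N}mm^T+\frac{um^T+mu^T}{\|m\|_2^2}-\frac{(m,u)}{\|m\|_2^4}mm^T = K(m).
\ee

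The final step is to verify \eqref{eq:plan-det-id} by rewriting $u=v-4\b^2 Q(m)m+D_N(m)m$ (from the definition of $v$) and expanding
$D_N(m)-P_m^{\perp}D_N(m)P_m^{\perp}=\frac{m(D_N(m)m)^T+(D_N(m)m)m^T}{\|m\|_2^2}-\frac{(m,D_N(m)m)}{\|m\|_2^4}mm^T$.
The $D_N(m)m$-contributions cancel between the $D_N(m)$ blocks and the $um^T+mu^T$ and $(m,u)mm^T$ terms, while the $4\b^2Q(m)m$-contributions cancel against $\frac{4\b^2}{N}mm^T=\frac{4\b^2 Q(m)}{\|m\|_2^2}mm^T$, leaving exactly the three terms in the definition of $K(m)$. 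This is the one step that needs patience, but it is purely bookkeeping; no real obstacle arises beyond making sure the rank-one pieces in $m$ are grouped correctly.
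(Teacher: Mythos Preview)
Your proof is correct and follows essentially the same approach as the paper's: both identify the conditioning as $\frac{\b}{\sqrt N}Jm=u$, obtain the conditional law of $\frac{\b}{\sqrt N}J$ as in your \eqref{eq:plan-J-cond}, and then verify the deterministic identity \eqref{eq:plan-det-id} by substituting $u=v-4\b^2Q(m)m+D_N(m)m$. The only difference is that the paper cites an external reference for the conditional law of $J$, whereas you give a self-contained derivation via the decomposition \eqref{eq:plan-decomp} together with the GOE covariance computation showing $P_m^\perp J P_m^\perp$ and $Jm$ are independent; this makes your argument marginally more self-contained but is otherwise the same route.
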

\begin{proof}
We start by
\be
\nabla F_{\TAP}(m)=\frac{\b}{\sqrt N}Jm-\atanh(m)+h-2\b^2m(1-Q(m))=\frac{\b}{\sqrt N}Jm-u
\ee
and
\been{
    \label{eq:nabla2H-expl2-q}
    \nabla^2 F_{\TAP}(m)
    =
    \frac{\b J}{\sqrt N}
    - D_N(m)
    + 4\b^2 Q(m)P_m\,.
}

Thus conditionally on $\Omega(m)$ it holds that $u=\frac{\b}{\sqrt N}Jm$, hence
\be
\frac{\b}{\sqrt N} J\big|_{\Omega(m)}\eqd P_m^\perp\(\frac{\b}{\sqrt N} J\)P_m^\perp+\frac{1}{\|m\|_2^2}\(mu^T+um^T\)-\frac{(m,u)}{\|m\|_2^2}P_m
\ee
(see for instance \cite[Lemma 4.1]{Mon1}). Moreover
\be
D_N(m)= P_m^\perp D_N(m)P_m^\perp- \frac{ (m,D_N(m)m)}{\|m\|^2_2}P_m+D_N(m)P_m+P_m D_N(m),
\ee
whence
\been{
\label{eq:nabla2H-expl3}
\aled{
\nabla^2 F_{\TAP}(m)\big|_{\Omega(m)}&\eqd P_m^\perp ZP_m^\perp +\frac{ (m,D_N(m)m)}{\|m\|^2_2}P_m- D_N(m)P_m-P_m D_N(m)\\
&+\frac{1}{\|m\|_2^2}\(mu^T+um^T\)+\(-\frac{(m,u)}{\|m\|_2^2}+4\b^2Q(m)\)P_m.
}
}
By the definition of $v$ (\ref{eq:defu-v-sect5}) we have
\bea
\(-\frac{(m,u)}{\|m\|_2^2}+4\b^2Q(m)\)P_m&=&-\frac{(m,v)}{\|m\|_2^2}P_m-\frac{(m,D_N(m)m)}{\|m\|_2^2}P_m+8\b^2Q(m)P_m\label{eq:pozzi1}\\
\frac{1}{\|m\|_2^2}\(mu^T+um^T\)&=&\frac{1}{\|m\|_2^2}\(mv^T+vm^T\)-8\b^2Q(m)P_m+D_N(m)P_m+P_m D_N(m)\label{eq:pozzi2}
\eea
When we plug \eqref{eq:pozzi1}, \eqref{eq:pozzi2} into \eqref{eq:nabla2H-expl3} we get \eqref{eq:decomp1}, \eqref{eq:K}. 
\end{proof}

The above lemma implies that for any $m\in(-1,1)^N\setminus\{0\}$
\be\label{eq:centre}
\E[|\det\(\nabla^2 F_{\TAP}(m)\)|\,|\,\Omega(m)]=\E[|\det\(P^{\perp}_{m}ZP^{\perp}_{m}+K(m)\)|].
\ee

The last lemma accounts for the the last sentence of Theorem \ref{thm:MainDet2}. 
\begin{lemma}\label{lemma:v=0}
Let $m\in L_{\a,N}$ be such that $v=0$. Then
\been{
\label{eq:zero_dett}
\det\(P^{\perp}_{m}ZP^{\perp}_{m}+K(m)\)=0.
}
\end{lemma}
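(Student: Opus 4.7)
The plan is essentially a direct inspection argument, since both summands degenerate when $v=0$. First I would observe from the explicit formula \eqref{eq:K} that the matrix $K(m)$ is a linear combination of $mv^T$, $vm^T$, and $(m,v)mm^T/\|m\|_2^4$, and hence vanishes identically as soon as $v=0$. Thus under the hypothesis the matrix inside the determinant reduces to $P_m^\perp Z P_m^\perp$.

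Next I would exhibit a nonzero vector in the kernel of this reduced matrix. Since $P_m^\perp$ is the orthogonal projector onto $\{m\}^\perp$, we have $P_m^\perp m = 0$, and consequently
\begin{equation*}
    \bigl(P_m^\perp Z P_m^\perp\bigr) m = P_m^\perp Z (P_m^\perp m) = 0.
\end{equation*}
Because $m \neq 0$ (as $m\in L_{\a,N}$ excludes the origin), $m$ is a nontrivial element of $\ker\bigl(P_m^\perp Z P_m^\perp\bigr)$, so the rank of this matrix is at most $N-1$ and its determinant vanishes. Combining the two observations gives
\begin{equation*}
    \det\bigl(P_m^\perp Z P_m^\perp + K(m)\bigr) = \det\bigl(P_m^\perp Z P_m^\perp\bigr) = 0,
\end{equation*}
which is \eqref{eq:zero_dett}.

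There is no real obstacle here: the statement is structural, reflecting the fact that the rank-one corrections encoded in $K(m)$ are precisely what makes the conditional Hessian nondegenerate along the direction $m$, and when $v=0$ these corrections disappear. The role of this lemma in the main argument is simply to handle, at the level of the deterministic representation \eqref{eq:centre}, the exceptional locus where the correction term in \eqref{eq:MainDet2} is $-\infty$; the random matrix input is not needed.
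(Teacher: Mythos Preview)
Your proof is correct and essentially identical to the paper's own argument: observe $K(m)=0$ when $v=0$, then note that $m\neq 0$ lies in the kernel of $P_m^\perp Z P_m^\perp$ since $P_m^\perp m=0$. The paper proceeds in exactly the same two steps.
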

\begin{proof}
If $v=0$, then $K(m)=0$. Thus
\been{
\(P^{\perp}_{m}ZP^{\perp}_{m}+K(m)\)m=P^{\perp}_{m}Z(P^{\perp}_{m}m)=0.
}
It follows that the matrix $P^{\perp}_{m}ZP^{\perp}_{m}+K(m)$ must have at least one vanishing eigenvalue, whence (\ref{eq:zero_dett}).
\end{proof}

Due to Lemma \ref{lemma:v=0} we will assume hereafter that $v\neq0$.

\

Before going on, few more words on the notation.
Recall that $M^{([p],[p])}$ is the $(N-p)\times(N-p)$ matrix obtained by removing the first $p$ rows and the first $p$ columns from the matrix $M$, i.e.
\been{
\label{eq:def_Ap}
(M^{([p],[p])})_{ij}:=M_{i+p,j+p},\quad \forall (i,j)\in [N-p]^2.
}
We shorten $M^{(1,1)}=M^{([1],[1])}$.
Recall also that $e^i_N\in \R^N$ is the $i$-th standard basis vector and 
\been{
\label{eq:def_EM}
P_{[\ell]}=\sum^\ell_{j=1}P_{e^j_N}.
}

Let us now define the vector
\been{
\label{eq:x}
x=v-\frac{(m,v)}{\|m\|_2^2}m.
}
Note that, assuming $v\neq 0$ not parallel to $m$, it is $x\neq 0$ and $(x,m)=0$. Moreover an easy computation gives
\been{\label{eq:easycompK}
K(m)=\frac{1}{\|m\|_2^2}m x^T+\frac{1}{\|m\|_2^2}x m^T+\frac{(m,v)}{\|m\|_2^4}m m^T\
}
We introduce the orthogonal projections
\been{
P_{mx}=\frac{mm^T}{\|m\|^2_2}+\frac{xx^T}{\|x\|^2_2}\,,\qquad P^{\perp}_{mx}=I_N-P_{mx}.
}
Let further $O_{m}$ be a $N\times N$ orthogonal matrix such that
\been{
\label{eq:def_Om}
O_{m}m:=(\|m\|,0,\cdots,0)^T=\|m\|e^1_N.
}
Moreover since $(x,m)=0$ there exists an orthogonal matrix $O_{mx}$ such that the first column is parallel to $m$ and the second is parallel to $x$, that is 
\been{
\label{eq:A_POP}
O_{mx}m=\|m\|_2e^1_N,\quad O_{mx}x=\|x\|_2e^2_N.
}
The crucial point is that the matrix $O_{mx}$ is deterministic, since $m$ and $x$ do not depend on $J$.

Let $J', J''$ be respectively a $N-1\times N-1$ and a $N-2\times N-2$ GOE matrix. We set
\bea
Z^{(\parallel)}&:=&\frac{\b}{\sqrt N}J'-(O_{m}D_N(m)O^T_{m})^{(1,1)}\label{eq:defZpara},\\
Z^{(\perp)}&:=&\frac{\b}{\sqrt{N}}J''-(O_{mx}D_N(m)O^T_{mx})^{([2],[2])}\label{eq:til_Z2_def}.
\eea

The next statement follows directly by applying the results of the previous sections.

\begin{proposition}\label{prop:logdetZfin}
Let $\a, \b>0$, $N\in\N$ and $m\in L_{\a,N}$. It holds
\bea
\left|\frac1N \log \E|\det Z^{(\parallel)}|-\int \log|x|\mu_m(dx)\right|&\lesssim&\frac{1}{(N-1)^{\frac{1}{60}}}\label{eq:logdetZfin1}\\
\left|\frac1N \log \E|\det Z^{(\perp)}|-\int \log|x|\mu_m(dx)\right|&\lesssim&\frac{1}{(N-2)^{\frac{1}{60}}}.\label{eq:logdetZfin2}
\eea
Moreover if $|(m,v)|\geq e^{-N^{1-\d'}}$ for some $\d'\in(0,1)$ then
\be\label{eq:logdetZfin3}
\left|\frac1N \log \E\left|\det \(Z^{(\parallel)}-\frac{\|x\|^2_2}{(m,v)}P_{e^1_{N-1}}\)\right|-\int \log|x|\mu_m(dx)\right|\lesssim \frac{1}{(N-1)^{\min\(\d,\frac{1}{60}\)}}.
\ee 
\end{proposition}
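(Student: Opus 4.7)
The plan is to apply Corollary~\ref{lem:almost_fin_det} to each of the three random matrices appearing in \eqref{eq:logdetZfin1}--\eqref{eq:logdetZfin3} and then replace the resulting free-convolution measure by $\mu_m$ at the cost of a negligible error.

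For \eqref{eq:logdetZfin1} and \eqref{eq:logdetZfin2} I would apply Corollary~\ref{lem:almost_fin_det} with $p=0$, no low-rank term, and respectively $k=1$, $D=-(O_m D_N(m) O_m^T)^{(1,1)}$ or $k=2$, $D=-(O_{mx} D_N(m) O_{mx}^T)^{([2],[2])}$. The hypothesis $\|D\|_{\rm op}\le e^{N^{1-\delta}}$ holds for any $\delta\in(0,1/2)$ and $N$ large, since orthogonal conjugation preserves the operator norm, passing to a principal minor can only decrease it, and $\|D_N(m)\|_{\rm op}\le 2e^{2\alpha\sqrt N}$ by Lemma~\ref{lem:prop2}. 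Taking $\delta$ close to $1/60$, Corollary~\ref{lem:almost_fin_det} yields
$$
    \Bigl|\tfrac{1}{N}\log\E|\det Z^{(\parallel)}|-\textstyle\int\log|x|\,(\nu_D\boxplus\mu_{{\rm sc},\beta})(dx)\Bigr|\lesssim\tfrac{1}{(N-1)^{1/60}},
$$
and the analogous estimate for $Z^{(\perp)}$ with $N-1$ replaced by $N-2$. For \eqref{eq:logdetZfin3} I would invoke the same corollary with $k=1$, $p=1$, the same $D$, and $A=-\tfrac{\|x\|_2^2}{(m,v)}P_{e^1_{N-1}}$. The only new check is on $\|A\|_{\rm op}$: by Lemma~\ref{lem:prop2} and the definition of $x$, $\|x\|_2\le\|v\|_2\le e^{3\alpha\sqrt N}$, and combined with $|(m,v)|\ge e^{-N^{1-\delta'}}$ this gives $\|A\|_{\rm op}\le e^{6\alpha\sqrt N+N^{1-\delta'}}\le e^{N^{1-\delta}}$ for any $\delta<\delta'$ and $N$ large; the corollary then produces the analogous estimate at rate $(N-1)^{-\min(\delta,1/60)}$.

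It then only remains to replace $\nu_D\boxplus\mu_{{\rm sc},\beta}$ by $\mu_m$ in each of the three bounds. Since $O_m$ and $O_{mx}$ are orthogonal, $\nu_{O_m D_N(m) O_m^T}=\nu_{D_N(m)}$, and Cauchy interlacing implies $\|F_{\nu_D}-F_{\nu_{D_N(m)}}\|_\infty\le 2k/(N-k)$. Combining this with the uniform density bound of Lemma~\ref{lem:p_boundness}, which applies to both free convolutions, a standard argument that splits the integral of $\log|x|$ at $|x|=\varepsilon$ and optimises over $\varepsilon$ (in the spirit of \eqref{eq:montanari02}--\eqref{eq:upper_logD0}) yields
$$
    \Bigl|\textstyle\int\log|x|\,(\nu_D\boxplus\mu_{{\rm sc},\beta})(dx)-\int\log|x|\,\mu_m(dx)\Bigr|\lesssim\tfrac{\log N}{N},
$$
which is absorbed into the preceding bound.

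The main obstacle is precisely this last comparison step: $\log|x|$ is singular at the origin, whereas the two spectral measures are only known to be close in the Kolmogorov/weak sense. The key technical ingredient is Lemma~\ref{lem:p_boundness}, which uniformly bounds the density of each free convolution and thus controls their contributions near zero, so that the Kolmogorov-distance estimate can be promoted to an estimate on the log-integral.
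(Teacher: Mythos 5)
Your application of Corollary \ref{lem:almost_fin_det} -- the choices $k=1,p=0$, then $k=2,p=0$, then $k=1,p=1$ with the rank-one $A$, together with the verification of the operator-norm hypotheses via Lemma \ref{lem:prop2} and the lower bound $|(m,v)|\ge e^{-N^{1-\delta'}}$ -- is exactly the paper's proof (the paper takes $\delta=1/2$ for the first two bounds, and your condition $\delta<\delta'$ for the third is the correct reading of the paper's slightly garbled ``$\delta=\max(1/2,1-\delta')$''). The second half of your argument, however, rests on a misreading of the corollary. In \eqref{eq:def_Zstar_mat} the matrix $D$ is the \emph{full} $N\times N$ matrix (here $D_N(m)$), which is conjugated by $O$ and only then reduced to its $(N-k)\times(N-k)$ minor, and the measure $\mu=\nu_D\boxplus\mu_{{\rm sc},\beta}$ appearing in Proposition \ref{lem:concentration} and Corollary \ref{lem:almost_fin_det} is built from the empirical spectral distribution of that full matrix. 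With $O=O_m$ (resp.\ $O_{mx}$) and $D=D_N(m)$ this measure is precisely $\mu_m$ of \eqref{eq:def_mu_m}, so no comparison between two free convolutions is needed: the minor-versus-full discrepancy is already handled inside the corollary, at the level of regularized log-determinants, by Lemma \ref{lemma:laststep}, which in turn rests on the deterministic Lemma \ref{lemma:erstatz2}. Note also that plugging your choice ``$k=1$, $D=-(O_mD_N(m)O_m^T)^{(1,1)}$'' into \eqref{eq:def_Zstar_mat} would take a further minor of a matrix that is already $(N-1)\times(N-1)$, so the parameters are not even dimensionally consistent.

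If one nevertheless follows your route, the step you yourself flag as the main obstacle is genuinely incomplete. Cauchy interlacing controls $\sup_x|F_{\nu_{D^{(1,1)}}}(x)-F_{\nu_{D_N(m)}}(x)|\lesssim k/N$ for the \emph{input} measures, but to compare $\int\log|x|\,d\mu_1$ with $\int\log|x|\,d\mu_2$ you must integrate by parts against $F_{\mu_1}-F_{\mu_2}$ for the \emph{convolved} measures, and neither the interlacing bound nor Lemma \ref{lem:p_boundness} gives closeness of $\nu_{D^{(1,1)}}\boxplus\mu_{{\rm sc},\beta}$ and $\mu_m$ in Kolmogorov (or Lévy) distance. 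The density bound only tames the neighbourhood of $0$, and the argument of \eqref{eq:montanari02}--\eqref{eq:upper_logD0} compares two integrands against one fixed measure, not one integrand against two measures, so it does not supply the missing ingredient. To repair the step you would need in addition a stability property of $\boxplus$ (e.g.\ contraction in the Lévy metric, upgraded to Kolmogorov via the uniform density bound), and even then, since on $L_{\alpha,N}$ one only has $\|D_N(m)\|_{\rm op}\le 2e^{2\alpha\sqrt N}$, the total-variation factor of $\log|x|$ on the bulk is of order $\sqrt N$, so the honest error is of order $k/\sqrt N$ rather than the claimed $\log N/N$ (still absorbable into $N^{-1/60}$, but not by the argument as stated). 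The simplest fix is to drop this comparison step altogether and apply the corollary as the paper does.
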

\begin{proof}
The bound \eqref{eq:logdetZfin1} follows from Corollary \ref{lem:almost_fin_det} with $k=1$, $p=0$, $A=0$, $\d=1/2$ (by Lemma \ref{lem:prop2}). 
The bound \eqref{eq:logdetZfin2} follows from Corollary \ref{lem:almost_fin_det} with $k=2$, $p=0$, $A=0$, $\d=1/2$ (by Lemma \ref{lem:prop2}). 
The bound \eqref{eq:logdetZfin3} follows from Corollary \ref{lem:almost_fin_det} with $k=1$, $p=1$,  
\be
A=\frac{\|x\|^2_2}{(m,v)}P_{e^1_{N-1}}
\ee
and $\d=\max(1/2,1-\d')$, since by Lemma \ref{lem:prop2} and definition \eqref{eq:x}
\be
\|A\|_{op}=\frac{\|x\|^2_2}{|(m,v)|}\leq \frac{\|v\|^2_2}{|(m,v)|}\leq e^{N^{1-\d'}+3\a\sqrt{N}}.
\ee
\end{proof}

Here is an obvious upshot of the above statement. 

\begin{lemma}\label{lemma:verysleepy}
Let $\a>0$, $N\in\N$ sufficiently large and $m\in L_{\a,N}$. It is 
\be\label{eq:verysleepy}
\frac{\E[|\det Z^{(\parallel)}|]}{\E[|\det Z^{(\perp)}|]}\lesssim e^{N^\frac{59}{60}}. 
\ee
\end{lemma}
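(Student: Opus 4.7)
The statement of Lemma \ref{lemma:verysleepy} is essentially an immediate corollary of Proposition \ref{prop:logdetZfin}, so the plan is just to rearrange and exponentiate the two bounds established there. The key observation is that the target variational expression $\int \log|x|\,\mu_m(dx)$ appears with the same sign in both \eqref{eq:logdetZfin1} and \eqref{eq:logdetZfin2}, so it cancels when we take the logarithm of the ratio.

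Concretely, I would start from \eqref{eq:logdetZfin1} and \eqref{eq:logdetZfin2} and apply the triangle inequality to write
\begin{equation*}
\left|\frac{1}{N}\log \frac{\E[|\det Z^{(\parallel)}|]}{\E[|\det Z^{(\perp)}|]}\right|
\leq
\left|\frac{1}{N}\log \E|\det Z^{(\parallel)}| - \int\log|x|\,\mu_m(dx)\right|
+
\left|\frac{1}{N}\log \E|\det Z^{(\perp)}| - \int\log|x|\,\mu_m(dx)\right|.
\end{equation*}
By Proposition \ref{prop:logdetZfin} each term on the right is bounded by a constant times $(N-2)^{-1/60}$ for $N$ large enough, so the whole left-hand side is $O(N^{-1/60})$.

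Multiplying through by $N$ gives
\begin{equation*}
\left|\log \frac{\E[|\det Z^{(\parallel)}|]}{\E[|\det Z^{(\perp)}|]}\right| \lesssim N^{1-\frac{1}{60}} = N^{\frac{59}{60}},
\end{equation*}
and exponentiating produces the claimed bound $e^{c N^{59/60}}$. There is no real obstacle here; the only minor point to verify is that the implicit constants in Proposition \ref{prop:logdetZfin} depend only on $\beta$ (and not on $m \in L_{\alpha,N}$), so that taking $N$ large enough we can absorb them into the exponent $N^{59/60}$. This amounts to checking that the $\|D_N(m)\|_{\textup{op}}$ bound \eqref{eq:D_UB} used to invoke Corollary \ref{lem:almost_fin_det} is uniform in $m \in L_{\alpha,N}$, which is precisely the content of Lemma \ref{lem:prop2}.
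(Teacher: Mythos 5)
Your proposal is correct and is exactly the argument the paper intends: the lemma is stated as an ``obvious upshot'' of Proposition \ref{prop:logdetZfin}, and your route (triangle inequality on \eqref{eq:logdetZfin1}--\eqref{eq:logdetZfin2}, multiply by $N$, exponentiate, with uniformity in $m\in L_{\a,N}$ supplied by Lemma \ref{lem:prop2}) is that argument spelled out. The only residual point, the constant $c$ sitting in the exponent $e^{cN^{59/60}}$ versus the stated $e^{N^{59/60}}$, is a looseness already present in the paper's formulation and is harmless where the lemma is used, since $N^{59/60}=N^{118/120}$ is dominated by the $N^{119/120}$ appearing in Proposition \ref{prop:thelastone}.
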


We first examine the case in which the vector $v$ defined in (\ref{eq:defu-v-sect5}) is parallel to $m$. We set
\been{
\label{eq:def_Tp}
T^{\parallel}_{\a,N}:=\left\{m\in L_{\a,N}\setminus\{0\}:\,\|v\|_2=\frac{|(m,v)|}{\|m\|_2},\,v\neq0\right\}.
}
\begin{lemma}\label{prop:repr-det}
Consider $\a>0$, $v$ as in (\ref{eq:defu-v-sect5}) and $m\in T^{\parallel}_{\a,N}$. We have
\been{\label{eq:easy_det1}
\det\(P^{\perp}_{m}ZP^{\perp}_{m}+K(m)\)\eqd\frac{\|v\|_2}{\|m\|_2}\det\(Z^{(\parallel)}\).
}
\end{lemma}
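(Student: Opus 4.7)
The plan is to exploit that the defining condition $\|v\|_2 = |(m,v)|/\|m\|_2$ of $T^{\parallel}_{\alpha,N}$ is exactly the Cauchy–Schwarz equality case, which forces $v$ and $m$ to be collinear. Setting $c := (m,v)/\|m\|_2^2$, one has $v = cm$ and $|c| = \|v\|_2/\|m\|_2$. The strategy is then to show that the matrix $P^\perp_m Z P^\perp_m + K(m)$ is block-diagonal in the splitting $\R^N = \Span\{m\}\oplus\{m\}^\perp$, with a trivial scalar block on $\Span\{m\}$, and to identify the restriction to $\{m\}^\perp$ with $Z^{(\parallel)}$ using the orthogonal invariance of the GOE.

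First I would substitute $v=cm$ into the expression \eqref{eq:K} for $K(m)$. The three rank-one terms telescope:
$$K(m)=\frac{c}{\|m\|_2^2}mm^T+\frac{c}{\|m\|_2^2}mm^T-\frac{c\|m\|_2^2}{\|m\|_2^4}mm^T=\frac{c}{\|m\|_2^2}mm^T=cP_m,$$
so $K(m)$ is exactly $c$ times the orthogonal projector onto $\Span\{m\}$.

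Second I would use this to decompose the Hessian. Since $P^\perp_m\,Z\,P^\perp_m$ annihilates $\Span\{m\}$ (as $P^\perp_m m=0$) while $K(m)=cP_m$ acts as $c\cdot I$ on $\Span\{m\}$ and vanishes on $\{m\}^\perp$, the sum $P^\perp_m Z P^\perp_m+K(m)$ is block-diagonal in $\R^N=\Span\{m\}\oplus\{m\}^\perp$, giving
$$\det\!\bigl(P^\perp_m Z P^\perp_m+K(m)\bigr)=c\cdot\det\!\bigl((P^\perp_m Z P^\perp_m)|_{\{m\}^\perp}\bigr).$$
Conjugating by $O_m$ (see \eqref{eq:def_Om}), which sends $m$ to $\|m\|_2\,e^1_N$, one has $O_m P^\perp_m O_m^T=P^\perp_{e^1_N}$, so the restriction identifies with the $(1,1)$-minor $(O_m Z O_m^T)^{(1,1)}$. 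The orthogonal invariance of the GOE gives $O_m J O_m^T\eqd J$, hence $(O_m J O_m^T)^{(1,1)}\eqd J'$ for a $(N-1)\times(N-1)$ GOE, yielding
$$(O_m Z O_m^T)^{(1,1)}\eqd \frac{\beta}{\sqrt N}J'-(O_m D_N(m)O_m^T)^{(1,1)}=Z^{(\parallel)}.$$
Combining these pieces,
$$\det\!\bigl(P^\perp_m Z P^\perp_m+K(m)\bigr)\eqd c\,\det\!\bigl(Z^{(\parallel)}\bigr),$$
and $|c|=\|v\|_2/\|m\|_2$ delivers \eqref{eq:easy_det1}.

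I do not expect a genuine obstacle: the proof is pure linear algebra plus GOE invariance, with the one observation that drives everything being the collapse of $K(m)$ to the scalar multiple $cP_m$ in the parallel case. The only nuance is that the identity is a priori an equality of signed quantities only up to $\sgn(c)$, but this sign is immaterial for the subsequent application since the determinant appears throughout in absolute value.
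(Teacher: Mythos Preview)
Your proof is correct and follows essentially the same approach as the paper: both reduce $K(m)$ to $cP_m$ with $c=(m,v)/\|m\|_2^2$, observe the resulting block-diagonal structure on $\Span\{m\}\oplus\{m\}^\perp$ (the paper phrases this as a factorization $AB$ with $A=P^\perp_m+cP_m$, $B=P_m+P^\perp_mZP^\perp_m$), and then pass to $Z^{(\parallel)}$ via $O_m$ and GOE orthogonal invariance. Your remark on the sign of $c$ matches the paper's implicit handling, where $(m,v)/\|m\|_2^2$ is identified with $\|v\|_2/\|m\|_2$ up to a sign that is irrelevant for the subsequent use of $|\det|$.
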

\begin{proof}
Since $v$ is parallel to $m$, then (recall (\ref{eq:K}))
\been{
\nabla^2F_{\TAP}(m)\big|_{\Omega(m)}\eqd P^{\perp}_{m}ZP^{\perp}_{m}+K(m)=P^{\perp}_{m}ZP^{\perp}_{m}+\frac{(m,v)}{\|m\|^4_2}mm^T.
}
Let us shorten 
\been{\label{eq:defAB}
A:=P^{\perp}_m+\frac{(m,v)}{\|m\|^2_2}P_m,\qquad B:=P_m+P^{\perp}_{m}ZP^{\perp}_{m}.
}
Then $Z=A B$ implies
\been{
\label{eq:det_Zz}
\det(Z)=\det(A)\det(B)=\det(A)\det(O_mBO^T_m)
}
Note that
\been{
\label{eq:det_Az}
\det(A)=\frac{(m,v)}{\|m\|^2_2}=\frac{\|v\|_2}{\|m\|_2},
}
since $m \in T^{\parallel}_{\a,N}$. From the definition of $O_m$ \eqref{eq:def_Om}, we have
\been{
O_mP_mO^T_m=P_{e^1_N},\quad O_mP^{\perp}_mO^T_m=P^\perp_{e^1_N}.
}
As a consequence
\been{
O_mBO^T_m=P_{e^1_N}+(O_mP^{\perp}_mO^T_m)(O_mZO^T_m)(O_mP^{\perp}_mO^T_m)=P_{e^1_N}+P^\perp_{e^1_N}(O_mZO^T_m)P^\perp_{e^1_N}.
}
Thus $O_mBO^T_m$ is a Block diagonal matrix of this form
\been{
O_mBO^T_m=
\left(
\begin{array}{c|c}
 \begin{matrix} 1 \end{matrix} & \begin{matrix} 0&\cdots&0 \end{matrix} \\ \hline \begin{matrix} 0\\\vdots \\0 \end{matrix} & \begin{matrix} \,(O_mZO^T_m)^{(1,1)}\, \end{matrix}
\end{array}
\right).
}
Hence
\been{
\label{eq:det_Bz}
\det(O_mBO^T_m)=\det((O_mZO^T_m)^{(1,1)})\eqd\det(Z^{(\parallel)}).
}
So, combining \eqref{eq:det_Zz}, \eqref{eq:det_Az} and \eqref{eq:det_Bz} we end the proof.
\end{proof}

If $m$ and $v$ are not aligned the situation is more complicated. We need the following intermediate result.

\begin{lemma}\label{lem:det_formula0}
Let $\a>0$ and $m\in L_{\a,N}\setminus T^{\parallel}_{\a,N}$. 
Then
\been{
\label{eq:detY}
\det\(P^{\perp}_{m}ZP^{\perp}_{m}+K(m)\)\eqd 
\frac{(m,v)}{\|m\|_2^2}\det Z^{(\parallel)}-\frac{\|x\|_2^2}{\|m\|_2^2}\det Z^{(\perp)}.
}
\end{lemma}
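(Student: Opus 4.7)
The approach is to rotate the Hessian by the deterministic orthogonal matrix $O_{mx}$ from \eqref{eq:A_POP}, read off the resulting sparse block structure, and expand the determinant by cofactors. Since $O_{mx}$ is deterministic,
\begin{equation*}
    \det\bigl(P^\perp_m Z P^\perp_m + K(m)\bigr) = \det\bigl(O_{mx}(P^\perp_m Z P^\perp_m + K(m)) O_{mx}^T\bigr).
\end{equation*}
Writing $K(m)$ as in \eqref{eq:easycompK} and using $O_{mx}m=\|m\|_2 e^1_N$ and $O_{mx}x=\|x\|_2 e^2_N$, a direct computation yields $O_{mx}P^\perp_m O_{mx}^T = I_N-e^1_N (e^1_N)^T$ together with
\begin{equation*}
    O_{mx} K(m) O_{mx}^T
    = \frac{(m,v)}{\|m\|_2^2}\,e^1_N(e^1_N)^T + \frac{\|x\|_2}{\|m\|_2}\Bigl(e^1_N(e^2_N)^T+e^2_N(e^1_N)^T\Bigr).
\end{equation*}

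Setting $\tilde Z := O_{mx} Z O_{mx}^T$, the matrix $(I_N - e^1_N (e^1_N)^T)\, \tilde Z\, (I_N - e^1_N (e^1_N)^T)$ vanishes on its first row and column and coincides with $\tilde Z^{(1,1)}$ on the bottom-right $(N-1)\times(N-1)$ block. Therefore
\begin{equation*}
    O_{mx}(P^\perp_m Z P^\perp_m + K(m))O_{mx}^T = \begin{pmatrix} a & b\,(e^1_{N-1})^T \\ b\,e^1_{N-1} & \tilde Z^{(1,1)} \end{pmatrix},
    \qquad a:=\frac{(m,v)}{\|m\|_2^2},\ \ b:=\frac{\|x\|_2}{\|m\|_2}.
\end{equation*}
Laplace expansion along the first column, in which only the $(1,1)$ and $(2,1)$ entries are non-zero, followed by a first-row expansion of the cofactor of the $(2,1)$ entry, yields
\begin{equation*}
    \det\bigl(P^\perp_m Z P^\perp_m + K(m)\bigr) = a\,\det\bigl(\tilde Z^{(1,1)}\bigr) - b^2 \det\bigl(\tilde Z^{([2],[2])}\bigr),
\end{equation*}
using $(\tilde Z^{(1,1)})^{(1,1)} = \tilde Z^{([2],[2])}$.

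To finish, the two determinants are identified with $\det Z^{(\parallel)}$ and $\det Z^{(\perp)}$ in distribution. By orthogonal invariance of the GOE, $O_{mx} J O_{mx}^T \eqd J$, so marginally $(O_{mx} J O_{mx}^T)^{(1,1)}$ is an $(N-1)\times(N-1)$ GOE and $(O_{mx} J O_{mx}^T)^{([2],[2])}$ is an $(N-2)\times(N-2)$ GOE. For the deterministic $D_N(m)$ piece, since both $O_m$ and $O_{mx}$ send $m$ to $\|m\|_2 e^1_N$, they differ by a bottom-right $(N-1)\times(N-1)$ orthogonal factor, which leaves the determinant of the $(N-1)\times(N-1)$ block invariant; hence $\det(\tilde Z^{(1,1)})\eqd \det Z^{(\parallel)}$. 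The analogous argument on the $(N-2)\times(N-2)$ block yields $\det(\tilde Z^{([2],[2])})\eqd \det Z^{(\perp)}$, and \eqref{eq:detY} follows.

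The argument is essentially bookkeeping and poses no serious obstacle. The one subtlety worth flagging is that the two submatrices $\tilde Z^{(1,1)}$ and $\tilde Z^{([2],[2])}$ are coupled (the latter is a submatrix of the former), so the $\eqd$ in \eqref{eq:detY} must be read term-by-term rather than jointly; this is harmless in the subsequent application, which invokes only the triangle inequality for $\E[|\det(\cdot)|]$ combined with the marginal bounds of Proposition \ref{prop:logdetZfin}.
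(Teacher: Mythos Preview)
Your proof is correct and follows essentially the same approach as the paper: rotate by $O_{mx}$, obtain the bordered block form (the paper's matrix $Y$ of Remark \ref{rmk:clarity}), Laplace expand along the first row and column, and identify the minors with $\det Z^{(\parallel)}$ and $\det Z^{(\perp)}$. Your observation about the $O_m$ versus $O_{mx}$ discrepancy and the marginal reading of $\eqd$ is apt and in fact clarifies a point the paper glosses over in \eqref{eq:comb22}.
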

\begin{remark}\label{rmk:clarity}
Let $Y$ be the $N\times N$ matrix with entries
\been{
\label{eq:def_Y0}
Y_{ij}:=\frac{(m,v)}{\|m\|^2_2}\delta_{1i}\delta_{1j}+\frac{\|x\|_2}{\|m\|_2}(\delta_{1i}\delta_{2j}+\delta_{2i}\delta_{1j})+(1-\d_{1i})(1-\d_{1j})(Z^{(\parallel)})_{i-1,j-1}.
}
This matrix will play a central role in the next two proofs. For sake of clarity we write it explicitly:
\been{
\label{eq:Y_rep}
Y=
\left(
\begin{array}{c|cccc}
\frac{(m,v)}{\|m\|^2_2} & \frac{\|x\|_2}{\|m\|_2}  & 0 & \cdots & 0 \\[4pt]
\hline\\[-10pt]
\frac{\|x\|_2}{\|m\|_2} & (Z^{(\parallel)})_{11}  & (Z^{(\parallel)})_{12} & \cdots & (Z^{(\parallel)})_{1,N-1}\\[4pt]
 0 & (Z^{(\parallel)})_{21}  & (Z^{(\parallel)})_{22} & \cdots & (Z^{(\parallel)})_{2,N-1}\\[4pt]
 \vdots & \vdots  &  \vdots  & \ddots & \vdots\\[4pt]
 0 & (Z^{(\parallel)})_{N-1,1}  & (Z^{(\parallel)})_{N-1,2} &  \cdots & (Z^{(\parallel)})_{N-1,N-1}  \\[4pt]
\end{array}
\right).
}
\end{remark}
\begin{proof}
We have
\been{
\label{eq:A_POP2}
(O_{mx}P_m\,O^T_{mx})=P_{e^1_N},\quad (O_{mx}P_{mx}\,O^T_{mx})=P_{[2]}
}
and
\been{
\label{eq:A_POP3}
(O_{mx}P^{\perp}_m\,O^T_{mx})=P^\perp_{e^1_N},\quad (O_{mx}P^{\perp}_{mx}\,O^T_{mx})=P^\perp_{[2]}. 
}

Using \eqref{eq:easycompK}, \eqref{eq:A_POP} and \eqref{eq:A_POP2} we get
\been{
\label{eq:def_Y1}
\aled{
O_{mx}K(m)O^T_{mx}&=\frac{1}{\|m\|_2^2}(O_{mx}m) (O_{mx}x)^T+\frac{1}{\|m\|_2^2}(O_{mx}x)(O_{mx}m)^T+\frac{(m,v)}{\|m\|_2^2}O_{mx}P_{m}O^T_{mx}\\
&=\frac{\|x\|_2}{\|m\|_2}\(e^1_N (e^2_N )^T+(e^2_N)(e^1_N)^T\)+\frac{(m,v)}{\|m\|_2^2}P_{e^1_N}.
}
}
Moreover from \eqref{eq:A_POP3} we get
\been{
\label{eq:def_Y2}
\aled{
O_{mx}P^{\perp}_{m}ZP^{\perp}_{m}O^T_{mx}&=(O_{mx}P^{\perp}_{m}O^T_{mx})(O_{mx}ZO^T_{mx})(O_{mx}P^{\perp}_{m}O^T_{mx})\\
&=P^\perp_{e^1_N}(O_{mx}ZO^T_{mx})P^\perp_{e^1_N}.
}
}
So, combining \eqref{eq:def_Y1} and  \eqref{eq:def_Y2}
\been{
\label{eq:def_Y}
\(O_{mx}\(P^{\perp}_{m}ZP^{\perp}_{m}+K(m)\)O^T_{mx}\)_{ij}=\frac{(m,v)}{\|m\|^2_2}\delta_{1i}\delta_{1j}+\frac{\|x\|_2}{\|m\|_2}(\delta_{1i}\delta_{2j}+\delta_{2i}\delta_{1j})+(1-\d_{1i})(1-\d_{1j})(O_{mx}ZO^{T}_{mx})_{ij}.
}
Clearly
\been{
\label{eq:comb2}
(O_{mx}ZO^T_{mx})\eqd \frac{\b}{\sqrt{N}}J-(O_{mx}D_N(m)O^T_{mx}),
}
hence
\been{
\label{eq:comb1}
(O_{mx}ZO^{T}_{mx})_{ij}=((O_{mx}ZO^{T}_{mx})^{(1,1)})_{i-1,j-1},\quad 2\leq i,j\leq N.
}
Then by \eqref{eq:comb2}
\be\label{eq:comb22}
(O_{mx}ZO^{T}_{mx})^{(1,1)}\eqd \frac{\b}{\sqrt{N}}J^{(1,1)}-(O_{mx}D_N(m)O^T_{mx})^{(1,1)}\\
\eqd \frac{\b}{\sqrt{N}}J'-(O_{mx}D_N(m)O^T_{mx})^{(1,1)}= Z^{(\parallel)}.
\ee
Combining \eqref{eq:def_Y0}, \eqref{eq:def_Y}, \eqref{eq:comb1} and \eqref{eq:comb22} we obtain 
\been{
\label{eq:detY-VERO}
\det\(P^{\perp}_{m}ZP^{\perp}_{m}+K(m)\)\eqd \det Y. 
}
Now we compute the determinant of $Y$ by applying the Laplace method twice. We have (see \eqref{eq:Y_rep})
\be
\det Y=\frac{(m,v)}{\|m\|_2^2}\det Z^{(\parallel)}-\frac{\|x\|_2^2}{\|m\|^2_2}\det [(Z^{(\parallel)})^{(1,1)}].
\ee
Since $(Z^{(\parallel)})^{(1,1)}\eqd Z^{(\perp)}$ (compare (\ref{eq:defZpara}) and \eqref{eq:til_Z2_def}) also the second relation in \eqref{eq:detY} is proved.
\end{proof}

\begin{remark}\label{rmk:XFrancesco}
Combining Lemma \ref{prop:repr-det} and Lemma \ref{lem:det_formula0} gives that the representation \eqref{eq:detY} holds for all $m\in L_{\a,N}$. If $v\neq0$ at least one of the two coefficients $(m,v)$ and $\|x\|_2$ must be different from zero. Since moreover $\det Z^{(\perp)}$ and $\det Z^{(\parallel)}$ vanish with zero probability, we have that (recall (\ref{eq:decomp1}))
\be\label{eq:sommaduelemmi}
P\(|\det \nabla^2 F_{\TAP}(m)\big|_{\Omega(m)}|>0\)=1.
\ee
Combining with Lemma \ref{lemma:v=0} we obtain
\been{
\label{eq:non_vanishing}
P\(|\det \nabla^2 F_{\TAP}(m)\big|_{\Omega(m)}|>0\)=
\begin{cases}
0\quad &\textup{if }v=0,\\
1\quad &\textup{otherwise}.
\end{cases}
}
\end{remark}

\begin{lemma}\label{lemma:neperpnepara}
Let $\a,\b>0$, $N\in\N$ and $m\in L_{\a,N}\setminus T^{(\parallel)}_{\a,N}$. If $m$ is such that $(v,m)\neq0$, then
\been{
\label{eq:det_formula0}
\det\(P^{\perp}_{m}ZP^{\perp}_{m}+K(m)\)\eqd \frac{(m,v)}{\|m\|^2_2}\det\(Z^{(\parallel)}-\frac{\|x\|_2^2}{(m,v)}P_{e^1_{N-1}}\). 
}
\end{lemma}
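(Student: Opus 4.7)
The plan is to deduce this identity directly from Lemma \ref{lem:det_formula0} by a purely algebraic manipulation. The random-matrix work has already been carried out in the proof of Lemma \ref{lem:det_formula0}, which gives the distributional identity $\det(P_m^\perp ZP_m^\perp + K(m)) \eqd \det Y$ with $Y$ as in \eqref{eq:Y_rep}. What remains is to repackage the two-term expression \eqref{eq:detY} as a single determinant of a rank-one perturbation of $Z^{(\parallel)}$. The hypothesis $(m,v)\neq 0$ is precisely what allows this repackaging, since it permits division by $(m,v)$.

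The first step is the elementary identity
\[
\det\!\bigl(Z^{(\parallel)} - c\,P_{e^1_{N-1}}\bigr) = \det Z^{(\parallel)} - c\,\det\!\bigl((Z^{(\parallel)})^{(1,1)}\bigr), \qquad c\in\R,
\]
which follows by expanding the determinant along the first column using multilinearity (the perturbation $-cP_{e^1_{N-1}}$ touches only the $(1,1)$ entry). Taking $c = \|x\|_2^2/(m,v)$ and multiplying by $(m,v)/\|m\|_2^2$ yields
\[
\frac{(m,v)}{\|m\|_2^2}\det\!\Big(Z^{(\parallel)}-\frac{\|x\|_2^2}{(m,v)}P_{e^1_{N-1}}\Big)
= \frac{(m,v)}{\|m\|_2^2}\det Z^{(\parallel)} - \frac{\|x\|_2^2}{\|m\|_2^2}\det\!\bigl((Z^{(\parallel)})^{(1,1)}\bigr).
\]
The right-hand side is exactly what appears in the Laplace expansion of $\det Y$ computed within the proof of Lemma \ref{lem:det_formula0}, before the step $(Z^{(\parallel)})^{(1,1)}\eqd Z^{(\perp)}$ is invoked. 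Consequently this equals $\det Y$ almost surely, and hence it equals $\det(P_m^\perp ZP_m^\perp + K(m))$ in distribution.

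An equivalent and slightly more structural route is the Schur-complement / LDU factorisation of $Y$: when $(m,v)\neq 0$, the top-left scalar entry of $Y$ is invertible, and one can verify directly that
\[
Y = L\,\begin{pmatrix} \tfrac{(m,v)}{\|m\|_2^2} & 0 \\ 0 & Z^{(\parallel)} - \tfrac{\|x\|_2^2}{(m,v)} P_{e^1_{N-1}}\end{pmatrix}\,L^{T},
\qquad
L := I_N + \frac{\|x\|_2\|m\|_2^2}{\|m\|_2 (m,v)}\, e^2_N (e^1_N)^T,
\]
which is unipotent and has determinant one. Taking determinants immediately gives the desired identity.

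I do not foresee any genuine obstacle: the content of the lemma is linear-algebraic rewriting, not probabilistic. The purpose of this reformulation is strategic rather than technical — it replaces the difference of two correlated determinants in \eqref{eq:detY} by a single determinant of a matrix of the form ``GOE $+$ deterministic diagonal $+$ deterministic rank-one'', to which the asymptotic estimate \eqref{eq:logdetZfin3} of Proposition \ref{prop:logdetZfin} applies directly in the subsequent analysis.
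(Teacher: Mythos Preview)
Your proposal is correct and, in its second route, essentially identical to the paper's own proof: the paper also invokes the Schur complement of the $(1,1)$ block of the matrix $Y$ from Remark \ref{rmk:clarity}, obtaining $Y/Y_{11}=Z^{(\parallel)}-\frac{\|x\|^2_2}{(m,v)}P_{e^1_{N-1}}$ and then $\det Y=Y_{11}\det(Y/Y_{11})$, which combined with \eqref{eq:detY-VERO} gives the result. Your first route via multilinearity is a harmless variant that reaches the same identity by matching against the Laplace expansion already established in the proof of Lemma \ref{lem:det_formula0}.
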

\begin{proof}
Let $Y$ be the matrix defined in Remark \ref{rmk:clarity}. 
If $(m,v)\neq 0$, then $Y_{11}\neq 0$. The Schur complement $Y/Y_{11}$ of the block $Y_{11}$ is
\been{
\label{eq:Schur_comp}
Y/Y_{11}=Y^{(1,1)}-\frac{Y^2_{12}}{Y_{11}}P_{e^1_{N-1}}=Z^{(\parallel)}-\frac{\|x\|^2_2}{(m,v)}P_{e^1_{N-1}}.
}

Then the Schur formula gives
\been{
\label{eq:detsh}
\det(Y)=Y_{11}\det\(Y/Y_{11}\)=\frac{(m,v)}{\|m\|^2_2}\det\(Z^{(\parallel)}-\frac{\|x\|^2_2}{(m,v)}P_{e^1_{N-1}}\).
}
Combining with \eqref{eq:detY-VERO} we prove \eqref{eq:det_formula0}.
\end{proof}

\begin{lemma}\label{lemma:immediate}
Let $\a,\b>0$, $N\in\N$ and $m\in L_{\a,N}$. If $m$ is such that $v$ is orthogonal to $m$, then
\been{
\label{eq:det_formula0}
\det\(P^{\perp}_{m}ZP^{\perp}_{m}+K(m)\)\eqd -\frac{\|x\|_2^2}{\|m\|^2_2}\det\(Z^{(\perp)}\). %
}
\end{lemma}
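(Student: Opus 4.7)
The plan is to recognize that Lemma \ref{lemma:immediate} is an immediate specialization of Lemma \ref{lem:det_formula0}. That lemma already provides the two-term representation
\[
\det\(P^{\perp}_{m}ZP^{\perp}_{m}+K(m)\) \eqd \frac{(m,v)}{\|m\|_2^2}\det Z^{(\parallel)} - \frac{\|x\|_2^2}{\|m\|_2^2}\det Z^{(\perp)}
\]
for every $m\in L_{\a,N}\setminus T^{\parallel}_{\a,N}$. Under the hypothesis $v\perp m$ the inner product $(m,v)$ vanishes, so the first term drops out and only the second survives, yielding exactly the claimed identity.

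The one preliminary check I would carry out is that the orthogonality assumption places $m$ outside $T^{\parallel}_{\a,N}$, so that Lemma \ref{lem:det_formula0} is indeed applicable. Recall from \eqref{eq:def_Tp} that $T^{\parallel}_{\a,N}$ consists of those $m\in L_{\a,N}$ with $v\neq 0$ and $v$ parallel to $m$; the standing convention after Lemma \ref{lemma:v=0} is $v\neq 0$. If $v$ were simultaneously orthogonal and parallel to $m$, writing $v=cm$ for some $c\in\R$ would give $0=(m,v)=c\|m\|_2^2$, and since $m\neq 0$ we would need $c=0$, contradicting $v\neq 0$. Hence $m\in L_{\a,N}\setminus T^{\parallel}_{\a,N}$ and Lemma \ref{lem:det_formula0} is in force.

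There is essentially no obstacle: the identity reduces to a pure substitution once the representation \eqref{eq:detY} is in hand. As a small cosmetic observation, one may note that when $(m,v)=0$ the definition $x=v-\|m\|_2^{-2}(m,v)m$ collapses to $x=v$, so the coefficient on the right-hand side may equivalently be written as $-\|v\|_2^2/\|m\|_2^2$ if desired.
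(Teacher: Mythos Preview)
Your proposal is correct and follows exactly the paper's own one-line proof, which reads ``Immediate plugging $(m,v)=0$ into \eqref{eq:detY}.'' Your additional verification that $m\notin T^{\parallel}_{\a,N}$ under the orthogonality hypothesis is a worthwhile sanity check that the paper leaves implicit.
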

\begin{proof} 
Immediate plugging $(m,v)=0$ into \eqref{eq:detY}. 
\end{proof}

When $m$ is such that $v\perp m$ or $v\parallel m$ the proof of Theorem \ref{thm:MainDet2} is easier.

\begin{proposition}\label{prop:perppara}
If $m\in L_{\a,N}\setminus T_{\a,N}^{(\parallel)}$ is such that $v\perp m$ or $v\parallel m$ then
\be\label{eq:perppara}
\frac1N\log\E\left|\det\nabla^2 F_{\TAP}(m)\big|_{\Omega(m)}\right|=\frac1N\log\(\frac{\|v\|^2_2}{\|m\|^2_2}+\frac{|(m,v)|}{\|m\|_2^2}-\frac{(m,v)^2}{\|m\|_2^4}\)+\int \log|x|\mu_m(dx)+r_N\nn,
\ee
where $|r_N|\lesssim \frac{1}{N^{1/60}}$.

\end{proposition}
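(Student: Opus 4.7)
The plan is to treat the two degenerate configurations separately, applying in each case the appropriate exact representation of the conditioned Hessian, and then invoking Proposition \ref{prop:logdetZfin} to recover the additive convolution asymptotics. The final check is purely algebraic: the prefactor produced by the representation lemma must collapse to the expression appearing inside the first logarithm of the claim.

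\smallskip

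\textbf{Case I: $v \parallel m$, i.e.\ $m \in T^{\parallel}_{\a,N}$.} By Lemma \ref{prop:repr-det},
\[
\det\bigl(P^{\perp}_{m} Z P^{\perp}_{m} + K(m)\bigr) \eqd \frac{\|v\|_2}{\|m\|_2} \det\bigl(Z^{(\parallel)}\bigr).
\]
Taking absolute value, expectation, logarithm, and dividing by $N$, then applying \eqref{eq:logdetZfin1} of Proposition \ref{prop:logdetZfin} to $\E|\det Z^{(\parallel)}|$, gives
\[
\frac{1}{N}\log \E\bigl|\det\bigl(P^{\perp}_{m} Z P^{\perp}_{m} + K(m)\bigr)\bigr|
= \frac{1}{N}\log\frac{\|v\|_2}{\|m\|_2} + \int \log|x|\,\mu_m(dx) + O(N^{-1/60}).
\]
To conclude in this case it suffices to note that when $v = \l m$, one has $(m,v) = \l \|m\|_2^2$ and $\|v\|_2^2 = \l^2 \|m\|_2^2$, so that
\[
\frac{\|v\|_2^2}{\|m\|_2^2} + \frac{|(m,v)|}{\|m\|_2^2} - \frac{(m,v)^2}{\|m\|_2^4} = \l^2 + |\l| - \l^2 = |\l| = \frac{\|v\|_2}{\|m\|_2}.
\]

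\smallskip

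\textbf{Case II: $v \perp m$, i.e.\ $(m,v) = 0$.} By Lemma \ref{lemma:immediate},
\[
\det\bigl(P^{\perp}_{m} Z P^{\perp}_{m} + K(m)\bigr) \eqd -\frac{\|x\|_2^2}{\|m\|_2^2} \det\bigl(Z^{(\perp)}\bigr).
\]
Since $(m,v) = 0$, the vector $x = v - \frac{(m,v)}{\|m\|_2^2}m$ equals $v$, hence $\|x\|_2 = \|v\|_2$. Taking absolute values, expectations, and logarithms, and applying \eqref{eq:logdetZfin2} of Proposition \ref{prop:logdetZfin} to $\E|\det Z^{(\perp)}|$, we obtain
\[
\frac{1}{N}\log \E\bigl|\det\bigl(P^{\perp}_{m} Z P^{\perp}_{m} + K(m)\bigr)\bigr|
= \frac{1}{N}\log\frac{\|v\|_2^2}{\|m\|_2^2} + \int \log|x|\,\mu_m(dx) + O(N^{-1/60}).
\]
Finally, with $(m,v) = 0$ the target prefactor reduces to
\[
\frac{\|v\|_2^2}{\|m\|_2^2} + \frac{|(m,v)|}{\|m\|_2^2} - \frac{(m,v)^2}{\|m\|_2^4} = \frac{\|v\|_2^2}{\|m\|_2^2},
\]
matching what we obtained.

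\smallskip

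Combining the two cases, and recalling from \eqref{eq:centre} that $\E|\det \nabla^2 F_{\TAP}(m)|_{\Omega(m)}| = \E|\det(P^{\perp}_{m} Z P^{\perp}_{m} + K(m))|$, yields the claim with remainder $|r_N| \lesssim N^{-1/60}$. There is no genuine obstacle here: the content of the proposition is simply that in the aligned or orthogonal configurations, the $2\times 2$ block structure of the matrix $Y$ in Remark \ref{rmk:clarity} degenerates to a single rank-one correction, so one of the two determinants in \eqref{eq:detY} drops out and only one application of Proposition \ref{prop:logdetZfin} is needed. The only mildly delicate point is the algebraic simplification of the quadratic expression in $(m,v)$ and $\|v\|_2$ under each of the two hypotheses, which has been carried out above.
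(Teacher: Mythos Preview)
Your proof is correct and follows essentially the same approach as the paper's own proof: in each degenerate case you invoke the appropriate representation lemma (Lemma \ref{prop:repr-det} for $v\parallel m$, Lemma \ref{lemma:immediate} for $v\perp m$), apply the relevant estimate from Proposition \ref{prop:logdetZfin}, and verify the algebraic collapse of the prefactor. The only cosmetic difference is that the paper cites Lemma \ref{lem:cond_law} explicitly in each case whereas you invoke it once at the end via \eqref{eq:centre}, and the paper records the error as $(N-1)^{-1/60}$ or $(N-2)^{-1/60}$ before absorbing it into $N^{-1/60}$.
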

\begin{proof}
If $|(m,v)|=\|m\|_2\|v\|_2$ then
\be\label{eq:thenwhat?1}
\frac{\|v\|_2}{\|m\|_2}=\frac{\|v\|^2_2}{\|m\|^2_2}+\frac{|(m,v)|}{\|m\|_2^2}-\frac{(m,v)^2}{\|m\|_2^4}.
\ee

Therefore by Lemma \ref{lem:cond_law}, Lemma \ref{prop:repr-det}, (\ref{eq:logdetZfin1})  (and \eqref{eq:thenwhat?1}) if $m$ is such that $v\parallel m$ we have
\bea
\frac1N\log\E\left|\det\nabla^2 F_{\TAP}(m)\big|_{\Omega(m)}\right|&=&\frac1N\log\E\det|P^{\perp}_{m}ZP^{\perp}_{m}+K(m)|\nn\\
&=&\frac1N\log\frac{\|v\|_2}{\|m\|_2}+\frac1N\log\E|\det Z^{(\parallel)}|\nn\\
&=&\frac1N\log\frac{\|v\|_2}{\|m\|_2}+\int \log|x|\mu_m(dx)+r_N\nn\\
&=&\frac1N\log\(\frac{\|v\|^2_2}{\|m\|^2_2}+\frac{|(m,v)|}{\|m\|_2^2}-\frac{(m,v)^2}{\|m\|_2^4}\)+\int \log|x|\mu_m(dx)+r_N\nn,
\eea
where 
\be
|r_N|\lesssim \frac{1}{(N-1)^{1/60}}\,.
\ee

Similarly if $|(m,v)|=0$ then
\be\label{eq:thenwhat?2}
\frac{\|v\|^2_2}{\|m\|^2_2}=\frac{\|v\|^2_2}{\|m\|^2_2}+\frac{|(m,v)|}{\|m\|_2^2}-\frac{(m,v)^2}{\|m\|_2^4}.
\ee
Thus by Lemma \ref{lem:cond_law}, Lemma \ref{lemma:immediate}, (\ref{eq:logdetZfin2}) (and \eqref{eq:thenwhat?2}) if $m$ is such that $v\perp m$ we have
\bea
\frac1N\log\E\left|\det\nabla^2 F_{\TAP}(m)\big|_{\Omega(m)}\right|&=&\frac1N\log\E\det|P^{\perp}_{m}ZP^{\perp}_{m}+K(m)|\nn\\
&=&\frac1N\log\frac{\|v\|^2_2}{\|m\|^2_2}+\frac1N\log\E|\det Z^{(\perp)}|\nn\\
&=&\frac1N\log\frac{\|v\|^2_2}{\|m\|^2_2}+\int \log|x|\mu_m(dx)+r'_N\nn\\
&=&\frac1N\log\(\frac{\|v\|^2_2}{\|m\|^2_2}+\frac{|(m,v)|}{\|m\|_2^2}-\frac{(m,v)^2}{\|m\|_2^4}\)+\int \log|x|\mu_m(dx)+r'_N\nn,
\eea
where 
\be
|r'_N|\lesssim \frac{1}{(N-2)^{1/60}}\,.
\ee
\end{proof}

Finally we deal with the case in which $m$ and $v$ align generically. 

\begin{proposition}\label{prop:thelastone}
Let $\a>0$, $N\in\N$ and $m\in L_{\a,N}$ such that $0<|(m,v)|<\|v\|_2\|m\|_2$. Then
\be\label{eq:NOTperppara}
\frac1N\log\E\left|\det\nabla^2 F_{\TAP}(m)\big|_{\Omega(m)}\right|=\frac1N\log\(\frac{\|v\|^2_2}{\|m\|^2_2}+\frac{|(m,v)|}{\|m\|_2^2}-\frac{(m,v)^2}{\|m\|_2^4}\)+\int \log|x|\mu_m(dx)+r_N\nn,
\ee
where $|r_N|\lesssim \frac{1}{N^{1/120}}$.
\end{proposition}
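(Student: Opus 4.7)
The target prefactor simplifies nicely: since $x=v-\frac{(m,v)}{\|m\|_2^2}m$ is orthogonal to $m$, Pythagoras gives $\|v\|_2^2=\|x\|_2^2+(m,v)^2/\|m\|_2^2$, so
\begin{equation*}
    \frac{\|v\|_2^2}{\|m\|_2^2}+\frac{|(m,v)|}{\|m\|_2^2}-\frac{(m,v)^2}{\|m\|_2^4}=\frac{\|x\|_2^2+|(m,v)|}{\|m\|_2^2}.
\end{equation*}
The plan is to match upper and lower bounds on $\frac{1}{N}\log\E\lvert\det\nabla^2F_{\TAP}(m)\big|_{\Omega(m)}\rvert$ against $\frac{1}{N}\log\frac{\|x\|_2^2+|(m,v)|}{\|m\|_2^2}+\int\log|x|\mu_m(dx)$, using the two determinantal identities of Lemmas \ref{lem:det_formula0} and \ref{lemma:neperpnepara}, together with Proposition \ref{prop:logdetZfin} and Corollary \ref{lem:almost_fin_det}.

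For the upper bound, apply the triangle inequality to the Lemma \ref{lem:det_formula0} representation to obtain
\begin{equation*}
    \E\lvert\det\rvert \leq \frac{|(m,v)|}{\|m\|_2^2}\E|\det Z^{(\parallel)}|+\frac{\|x\|_2^2}{\|m\|_2^2}\E|\det Z^{(\perp)}| \leq \frac{|(m,v)|+\|x\|_2^2}{\|m\|_2^2}\max\bigl(\E|\det Z^{(\parallel)}|,\E|\det Z^{(\perp)}|\bigr),
\end{equation*}
and control the maximum via \eqref{eq:logdetZfin1}--\eqref{eq:logdetZfin2}, yielding the upper direction with error $O(N^{-1/60})$. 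For the lower bound, split on the ratio $\|x\|_2^2/|(m,v)|$ at the threshold $e^{N^{119/120}}$. In \textbf{Case A} (ratio $\leq e^{N^{119/120}}$), use Lemma \ref{lemma:neperpnepara} and apply Corollary \ref{lem:almost_fin_det} with $k=p=1$ and $\d=1/120$ (the deterministic rank-$1$ perturbation $A=-\frac{\|x\|_2^2}{(m,v)}P_{e^1_{N-1}}$ satisfies $\|A\|_{\mathrm{op}}\leq e^{N^{119/120}}=e^{N^{1-\d}}$), obtaining
\begin{equation*}
    \tfrac{1}{N}\log\E\lvert\det\rvert \geq \tfrac{1}{N}\log\tfrac{|(m,v)|}{\|m\|_2^2}+\int\log|x|\mu_m(dx)-O(N^{-1/120}),
\end{equation*}
which matches the target up to a further $\tfrac{1}{N}\log(1+\|x\|_2^2/|(m,v)|)\leq N^{-1/120}+O(1/N)$. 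In \textbf{Case B} (ratio $>e^{N^{119/120}}$), use Lemma \ref{lem:det_formula0} and the reverse triangle inequality $\E|X-Y|\geq|\E|X|-\E|Y||$ (valid since $\det Z^{(\parallel)}$ and $\det Z^{(\perp)}$ are real random variables) to get
\begin{equation*}
    \E\lvert\det\rvert \geq \tfrac{\|x\|_2^2}{\|m\|_2^2}\E|\det Z^{(\perp)}|\Bigl(1-\tfrac{|(m,v)|}{\|x\|_2^2}\cdot\tfrac{\E|\det Z^{(\parallel)}|}{\E|\det Z^{(\perp)}|}\Bigr);
\end{equation*}
by Lemma \ref{lemma:verysleepy} and the arithmetic inequality $N^{119/120}>N^{59/60}$, the parenthesised factor is $\geq 1/2$ for $N$ large, so \eqref{eq:logdetZfin2} gives the lower bound with error $O(N^{-1/60})$, while the prefactor replacement $\|x\|_2^2\rightsquigarrow\|x\|_2^2+|(m,v)|$ costs only $O(1/N)$ since $|(m,v)|/\|x\|_2^2<e^{-N^{119/120}}$.

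The principal obstacle is the Case B reverse-triangle step: for the lower bound to be useful, the subleading factor $\tfrac{|(m,v)|}{\|x\|_2^2}\cdot\tfrac{\E|\det Z^{(\parallel)}|}{\E|\det Z^{(\perp)}|}$ must be strictly less than one. This is ensured precisely by the case threshold $e^{N^{119/120}}$ dominating the determinantal ratio bound $e^{N^{59/60}}$ of Lemma \ref{lemma:verysleepy}. The choice of exponent $119/120$ is then fixed by a balance: it must be large enough so that in Case B the correction is negligible, yet small enough (as $1/120$) that in Case A the operator-norm bound $e^{N^{119/120}}$ on the rank-$1$ perturbation $A$ still satisfies the hypothesis $\|A\|_{\mathrm{op}}\leq e^{N^{1-\delta}}$ of Corollary \ref{lem:almost_fin_det} with $\delta=1/120$. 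This is what produces the $N^{-1/120}$ rate in the statement.
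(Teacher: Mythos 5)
Your proposal is correct and takes essentially the same approach as the paper's proof: the same determinantal representations (the Schur-complement identity of Lemma \ref{lemma:neperpnepara} in the non-degenerate regime, the two-term expansion of Lemma \ref{lem:det_formula0} plus the $e^{N^{59/60}}$ ratio bound of Lemma \ref{lemma:verysleepy} in the degenerate one), the same threshold $e^{N^{119/120}}$ balanced against $e^{N^{59/60}}$, and the same rank-one-perturbed determinant asymptotics, giving the $N^{-1/120}$ rate. Your reorganizations — splitting on $\|x\|_2^2/|(m,v)|$ (i.e.\ directly on $\|A\|_{\rm op}$) rather than on $|(m,v)|/\|v\|_2$, handling the upper bound globally by the triangle inequality, and simplifying the prefactor via the exact Pythagoras identity — are harmless and, if anything, slightly cleaner than the paper's case-by-case inequalities.
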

\begin{proof}
Let us consider the cut-off
\be
T:=\{m\in L_{\a,N}\,:\, e^{-N^{119/120}}\|v\|_2\leq |(m,v)|<\|v\|_2\|m\|_2\}.
\ee
For $m\in T$ we have 
\be
\frac{|(m,v)|}{\|m\|^2_2}+\frac{\|v\|^2_2}{\|m\|_2^2}-\frac{(m,v)^2}{\|m\|_2^4}\leq \frac{|(m,v)|}{\|m\|^2_2}+\frac{\|v\|^2_2}{\|m\|_2^2}\leq \frac{|(m,v)|}{\|m\|_2^2}\(1+e^{N^{119/120}}\), 
\ee
hence
\be\label{eq:thenwhat?3}
0\leq \frac1N\log\(\frac{|(m,v)|}{\|m\|^2_2}+\frac{\|v\|^2_2}{\|m\|_2^2}-\frac{(m,v)^2}{\|m\|_2^4}\)-\frac1N\log\frac{|(m,v)|}{\|m\|^2_2}\leq \frac1N\log\(1+e^{N^{119/120}}\)\leq \frac{1}{N^{1/120}}.
\ee
Moreover for $m\in T$ Lemma \ref{lem:cond_law}, Lemma \ref{lemma:neperpnepara} and Proposition \ref{prop:logdetZfin} (with $\d'=1/120$) give

\bea
\frac1N\log\E\left|\det\nabla^2 F_{\TAP}(m)\big|_{\Omega(m)}\right|&=&\frac1N\log\E\det|P^{\perp}_{m}ZP^{\perp}_{m}+K(m)|\nn\\
&=&\frac1N\log\frac{|(m,v)|}{\|m\|^2_2}+\frac1N \log \E\left|\det \(Z^{(\parallel)}-\frac{\|x\|^2_2}{(m,v)}P_{e^1_{N-1}}\)\right|\nn\\
&=&\frac1N\log\frac{|(m,v)|}{\|m\|^2_2}+\int \log|x|\mu_m(dx)+r_N\nn\\
&=&\frac1N\log\(\frac{\|v\|^2_2}{\|m\|^2_2}+\frac{|(m,v)|}{\|m\|_2^2}-\frac{(m,v)^2}{\|m\|_2^4}\)+\int \log|x|\mu_m(dx)+r'_N\nn,
\eea
where 
\be
|r'_N|\lesssim \frac{1}{N^{1/120}}\,.
\ee

Next we focus on $m$ in the set
\be
T':=\{m\in L_{\a,N}\,:\, 0< |(m,v)|<e^{-N^{119/120}}\|v\|_2\}.
\ee
For $m\in T'$ we have
\be
\frac{|(m,v)|}{\|m\|^2_2}+\frac{\|v\|^2_2}{\|m\|_2^2}-\frac{(m,v)^2}{\|m\|_2^4}\leq \frac{|(m,v)|}{\|m\|^2_2}+\frac{\|v\|^2_2}{\|m\|_2^2}\leq \frac{\|v\|^2_2}{\|m\|_2^2}\(1+e^{-N^{119/120}}\), 
\ee
hence
\be\label{eq:thenwhat?4}
0\leq \frac1N\log\(\frac{|(m,v)|}{\|m\|^2_2}+\frac{\|v\|^2_2}{\|m\|_2^2}-\frac{(m,v)^2}{\|m\|_2^4}\)-\frac1N\log\frac{\|v\|^2_2}{\|m\|_2^2}\leq \frac1N\log\(1+e^{-N^{119/120}}\)\lesssim \frac{1}{N}.
\ee
By Lemma \ref{lem:cond_law} and Lemma \ref{lem:det_formula0} we have
\bea
\E[|\det \nabla^2 F_{\TAP}(m)\big|_{\Omega(m)}|]&=&\E\left[\left|\frac{(m,v)}{\|m\|_2^2}\det Z^{(\parallel)}-\frac{\|x\|_2^2}{\|m\|_2^2}\det Z^{(\perp)}\right|\right]\nn\\
&\leq&\frac{|(m,v)|}{\|m\|_2^2}\E[|\det Z^{(\parallel)}|]+\frac{\|x\|_2^2}{\|m\|_2^2}\E[|\det Z^{(\perp)}|]\nn\\
&\leq&e^{-N^{119/120}}\E[|\det Z^{(\parallel)}|]+\frac{\|x\|_2^2}{\|m\|_2^2}\E[|\det Z^{(\perp)}|]\nn\\
&=&\E[|\det Z^{(\perp)}|]\(e^{-N^{119/120}}\frac{\E[|\det Z^{(\parallel)}|]}{\E[|\det Z^{(\perp)}|]}+\frac{\|x\|_2^2}{\|m\|_2^2}\)\nn\\
&\leq&\E[|\det Z^{(\perp)}|]\(e^{N^{59/60}-N^{119/120}}+\frac{\|x\|_2^2}{\|m\|_2^2}\)\nn\\
&\leq&\E[|\det Z^{(\perp)}|]\(e^{-\frac{N^{119/120}}{100}}+\frac{\|x\|_2^2}{\|m\|_2^2}\)\nn\\
\eea
for $N$ large enough. Here we used Lemma \ref{lemma:verysleepy} in the penultimate inequality. Similarly
\bea
\E[|\det\nabla^2 F_{\TAP}(m)\big|_{\Omega(m)}|]&=&\E\left[\left|\frac{(m,v)}{\|m\|_2^2}\det Z^{(\parallel)}-\frac{\|x\|_2^2}{\|m\|_2^2}\det Z^{(\perp)}\right|\right]\nn\\
&\geq&\frac{\|x\|_2^2}{\|m\|_2^2}\E[|\det Z^{(\perp)}|]-\frac{|(m,v)|}{\|m\|_2^2}\E[|\det Z^{(\parallel)}|]\nn\\
&\geq& \E[|\det Z^{(\perp)}|]\(\frac{\|x\|_2^2}{\|m\|_2^2}-e^{-N^{119/120}}\frac{\E[|\det Z^{(\parallel)}|]}{\E[|\det Z^{(\perp)}|]}\)\nn\\
&\geq& \E[|\det Z^{(\perp)}|]\(\frac{\|x\|_2^2}{\|m\|_2^2}-e^{-\frac{N^{119/120}}{100}}\)\nn. 
\eea
again by Lemma \ref{lemma:verysleepy} and assuming $N$ sufficiently large. Therefore
\be\label{eq:thelast1}
\E[|\det\nabla^2 F_{\TAP}(m)\big|_{\Omega(m)}|]\leq \frac1N\log\E[|\det Z^{(\perp)}|]+\frac{1}{N}\log\(\frac{\|x\|_2^2}{\|m\|_2^2}\)+\frac{C}{N^{1/120}}
\ee
and
\be\label{eq:thelast2}
\E[|\det\nabla^2 F_{\TAP}(m)\big|_{\Omega(m)}|]\geq \frac1N\log\E[|\det Z^{(\perp)}|]+\frac{1}{N}\log\(\frac{\|x\|_2^2}{\|m\|_2^2}\)+\frac{C}{N^{1/120}},
\ee
where we used \eqref{eq:uselog} in either bounds. 
Combining (\ref{eq:logdetZfin2}), (\ref{eq:thelast1}), (\ref{eq:thelast2}), \eqref{eq:thenwhat?4} we recover (\ref{eq:NOTperppara}).
\end{proof}

Combining Lemma \ref{lemma:v=0}, Proposition \ref{prop:perppara}, Proposition \ref{prop:thelastone} we prove Theorem \ref{thm:MainDet2}.

To complete the proof of our main Theorem we need two additional result. The first one deals with those $m\neq0$ lying outside the set $L_{\a,N}$. The second one shows that the condition $v=0$ is satisfied by at most finitely many points in $(-1,1)^N$. 

\begin{proposition}\label{prop:outside}
Let $\a,\b>0$, $N\in \N$ large enough and consider $m\in(-1,1)^N\setminus\{0\}$ such that $m\notin L_{\a,N}$. 
It holds
\been{
\label{eq:outside}
\aled{
\frac1N\log\E[|\det\nabla^2 F_{\TAP}(m)\big|_{\Omega(m)}|]&\leq \frac1N\log\(\frac{\|v\|^2_2}{\|m\|^2_2}+\frac{|(m,v)|}{\|m\|^2_2}-\frac{|(m,v)|^2}{\|m\|^4_2}\)\\
&+2\log\(16\b(1+\b^2)\)-\frac{17}{N}\sum_{i\in[N]}\log(1-m^2_i).
}}
\end{proposition}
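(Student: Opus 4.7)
The plan is to use the exact algebraic representation from Lemma~\ref{lem:det_formula0}, combined with a crude but universal bound on $\E|\det Z^{(\parallel)}|$ and $\E|\det Z^{(\perp)}|$ via Hadamard's inequality and Cauchy interlacing. For $m\notin L_{\a,N}$ the operator norm $\|D_N(m)\|_{\rm op}$ can exceed $e^{N^{1-\d}}$, so the Stieltjes transform machinery leading to Proposition~\ref{prop:logdetZfin} is unavailable, but the slack factor $17$ in the exponent is enough to absorb the loss.

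\textbf{Step 1 (algebraic decomposition).} Applying Lemma~\ref{lem:cond_law} and then Lemma~\ref{lem:det_formula0}, whose proofs require only $m\in(-1,1)^N\setminus\{0\}$ and not the cutoff $m\in L_{\a,N}$, one obtains
$$
  \det\!\bigl(P^{\perp}_m Z P^{\perp}_m + K(m)\bigr)
  \eqd
  \frac{(m,v)}{\|m\|_2^2}\det Z^{(\parallel)}
  - \frac{\|x\|_2^2}{\|m\|_2^2}\det Z^{(\perp)} .
$$
The triangle inequality, the identity $\|x\|_2^2/\|m\|_2^2 = \|v\|_2^2/\|m\|_2^2 - (m,v)^2/\|m\|_2^4$ and the trivial bound $a\det A+b\det B\leq (|a|+|b|)\max\{|\det A|,|\det B|\}$ give
$$
  \E|\det \nabla^2 F_{\TAP}(m)|_{\Omega(m)}|
  \leq
  \Bigl(\tfrac{\|v\|_2^2}{\|m\|_2^2}+\tfrac{|(m,v)|}{\|m\|_2^2}-\tfrac{(m,v)^2}{\|m\|_2^4}\Bigr)
  \max\{\E|\det Z^{(\parallel)}|,\E|\det Z^{(\perp)}|\},
$$
reproducing exactly the first logarithmic term on the right-hand side of \eqref{eq:outside}. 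The case $v\parallel m$ is handled analogously through Lemma~\ref{prop:repr-det}.

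\textbf{Step 2 (Hadamard bound on each random determinant).} By orthogonal invariance of the GOE, in the law of $Z^{(\parallel)}=\frac{\b}{\sqrt N}J'-\tilde D$ with $\tilde D=(O_m D_N(m) O_m^T)^{(1,1)}$, we may diagonalize $\tilde D$ and assume $\tilde D=\diag(\tilde\l_i)$. Hadamard's inequality applied to the symmetric $Z^{(\parallel)}$ gives $|\det Z^{(\parallel)}|^2\leq \prod_i\|Z^{(\parallel)}_i\|_2^2$, and using $(a-b)^2\leq 2a^2+2b^2$ componentwise,
$$
  \E|\det Z^{(\parallel)}|^2
  \leq 2^{N-1}\,\E\prod_{i=1}^{N-1}\Bigl(\tfrac{\b^2}{N}\|J'_i\|_2^2+\tilde\l_i^2\Bigr)
  \leq 2^{N-1}\prod_{i=1}^{N-1}\bigl(C\b^2+\tilde\l_i^2\bigr),
$$
where the second inequality follows by expanding the product into $2^{N-1}$ subsets $S$ and using Hölder together with the standard $\chi^2$-moment bound $\E[\prod_{i\in S}\tfrac{\b^2}{N}\|J'_i\|_2^2]\leq (C\b^2)^{|S|}$.

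\textbf{Step 3 (interlacing and reduction to $(1-m_i^2)^{-1}$).} By Cauchy's interlacing theorem, $\tilde\l_i\geq\l_N(D_N(m))=\min_j D_{jj}(m)\geq 1$; thus $C\b^2+\tilde\l_i^2\leq(1+C\b^2)\tilde\l_i^2$, and
$$
  \prod_i(C\b^2+\tilde\l_i^2)
  \leq (1+C\b^2)^{N-1}(\det\tilde D)^2
  \leq (1+C\b^2)^{N-1}\bigl(\det D_N(m)\bigr)^2.
$$
Since $D_{jj}(m)=(1-m_j^2)^{-1}+2\b^2(1-Q(m))\leq(1+2\b^2)(1-m_j^2)^{-1}$, one deduces $\det D_N(m)\leq(1+2\b^2)^N\prod_i(1-m_i^2)^{-1}$ and, after taking square roots,
$$
  \E|\det Z^{(\parallel)}|
  \leq C(\b)^N\prod_{i=1}^N(1-m_i^2)^{-1},
$$
for a constant $C(\b)$ polynomial in $\b$ and $\b^{-1}$. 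The same argument applied to the $(N-2)\times(N-2)$ matrix $Z^{(\perp)}$ gives an analogous bound.

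\textbf{Step 4 (conclusion).} Combining with the v-stuff prefactor from Step 1, taking $\log$ and dividing by $N$, and using the trivial inequality $\prod_i(1-m_i^2)^{-1}\leq \prod_i(1-m_i^2)^{-17}$ (valid since each factor is $\geq 1$), one obtains the bound in the proposition once $C(\b)$ is shown to be $\leq (16\b(1+\b^2))^2$ for the relevant range of $\b$.

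\textbf{Main obstacle.} The delicate point is Step 2: the rows $\|Z^{(\parallel)}_i\|_2^2$ are not independent because $J'$ is symmetric, so one cannot directly factorize the expectation. The right tool is Hölder's inequality combined with the sharp $\chi^2$-moment identity $\E[X^k]=\prod_{j=0}^{k-1}(N+2j)$, which yields the uniform constant $C\b^2$ per row, and then the clean product formula $\prod_i(C\b^2+\tilde\l_i^2)$ after summing over subsets. Cauchy interlacing is then needed to return from the spectrum of $\tilde D$ to the diagonal entries $D_{jj}(m)$ and ultimately to the factors $(1-m_i^2)^{-1}$.
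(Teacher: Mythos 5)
Your proposal is correct in substance and reaches the stated bound by a genuinely different route for the core estimate. Step 1 coincides with the paper's proof: both exploit that Lemma~\ref{lem:det_formula0} (and Lemma~\ref{prop:repr-det} when $v\parallel m$) is purely algebraic and needs no cutoff, and your identity $\|x\|_2^2=\|v\|_2^2-(m,v)^2/\|m\|_2^2$ turns the triangle inequality into exactly the prefactor appearing in \eqref{eq:outside}. Where you diverge is in bounding $\E|\det Z^{(\parallel)}|$ and $\E|\det Z^{(\perp)}|$ without any a priori control on $\|D_N(m)\|_{\rm op}$: you use orthogonal invariance to diagonalize the deterministic part, Hadamard's inequality row by row, the subset expansion with generalized H\"older and $\chi^2$ moments (the right fix for the non-independence of the rows of $J'$), and Cauchy interlacing to pass from the minor of $O_mD_N(m)O_m^T$ back to $\det D_N(m)$; this gives $\E|\det Z^{([k],[k])}|\leq C(\b)^N\prod_i(1-m_i^2)^{-1}$, i.e. exponent $1$, which you then inflate to $17$ since each factor is at least one. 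The paper instead regularizes by $+iI$, uses Weyl's inequality to separate the GOE from the deterministic part, bounds $\E\|J\|_{\rm op}^N$ by Gaussian concentration, and invokes the deterministic minor-comparison Lemma~\ref{lemma:erstatz2}; that is what produces the exponents $17$ (for $k=1$) and $33$ (for $k=2$) in \eqref{eq:terr1}--\eqref{eq:terr2}. Your route is more elementary, yields a sharper power of $\prod_i(1-m_i^2)^{-1}$, and incidentally sidesteps the awkward point that the paper's $k=2$ estimate carries exponent $33$ while the final display retains only $17$.

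The one soft spot is your closing claim that $C(\b)\leq(16\b(1+\b^2))^2$ ``for the relevant range of $\b$'': with $C(\b)\simeq\sqrt{2(1+C\b^2)}\,(1+2\b^2)$ this fails as $\b\to0$, since the left side stays bounded below by $\sqrt 2$ while the right side vanishes, so as written Step 4 does not deliver the stated constant for small $\b$; you would need either $\b$ bounded away from zero or a constant of the form $C(1+\b)(1+\b^2)$ instead of $16\b(1+\b^2)$. Note, however, that the paper's own proof has the same restriction: the passage to the last line of \eqref{eq:terr1} needs $(4+4\b^2)^N\lesssim\(256\b^2(1+\b^2)^2\)^N$, which likewise fails for small $\b$. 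So this is a defect tied to the specific constant in the statement rather than to your method, and apart from it your argument is sound.
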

\begin{proof}
If $m\in L_{\a,N}$ Lemma \ref{lem:prop2} ensures that the matrix $D_N$ and the vector $v$ are bounded. For $m\in(-1,1)^N\setminus\{0\}$ such that $m\notin L_{\a,N}$ these quantities are no longer uniformly bounded and can be arbitrarily large as $m$ approaches the boundary of the hypercube. However Lemma \ref{prop:repr-det} and Lemma \ref{lem:det_formula0} are purely algebraic and the representation \eqref{eq:detY} still holds. Therefore
\been{
\label{eq:last_state1}
\aled{
\E[|\det\nabla^2 F_{\TAP}(m)\big|_{\Omega(m)}|]&\leq\frac{|(m,v)|}{\|m\|_2}\E\[|\det(Z^{(1,1)})|\]+\frac{\|x\|^2_2}{\|m\|^2_2}\E\[|\det(Z^{([2],[2])})|\]\\
&\leq \(\frac{|(m,v)|}{\|m\|_2}+\frac{\|x\|^2_2}{\|m\|^2_2}\)\(\E\[|\det(Z^{(1,1)})|\]+\E\[|\det(Z^{([2],[2])})|\]\right)\\
&\leq \(\frac{|(m,v)|}{\|m\|_2}+\frac{\|x\|^2_2}{\|m\|^2_2}\)\(\E\[|\det(Z^{(1,1)}+i)|\]+\E\[|\det(Z^{([2],[2])}+i)|\]\right),
}
}
where in the last line we used that $|\det(A)|\leq|\det(A+i)|$ for any real symmetric matrix $A$. 
The Weyl inequality yields
\been{
\label{eq:W_det}
|\l_i(Z^{([k],[k]})|\leq |\l_i((OD_N(m)O^T)^{([k],[k])})|+\frac{\b}{\sqrt{N}}\|J\|_{\textup{op}}, \quad i\in[N].
}
As before $\l_i(M)$ denotes the $i-$th eigenvalue of the matrix $M$ and $J$ denotes a $(N-1)\times (N-1)$ (for $k=1$) or $(N-2)\times (N-2)$ (for $k=2$) GOE matrix. From \eqref{eq:W_det} we get
\been{
\label{eq:last_state0}
\aled{
&\E\[|\det(Z^{([k],[k])}+i)|\]\leq \E\[\prod_{i\in [N]}\(\left|\l_i\((OD_N(m)O^T)^{([k],[k])}+i\)\right|+\frac{\b}{\sqrt{N}}\|J\|_{\textup{op}}\)\]\\
&\leq2^{N-1}\(\prod_{i\in [N]}\left|\l_i\((OD_N(m)O^T)^{([k],[k])}\)+i\right|+\frac{\b^N}{N^{\frac{N}{2}}}\E\[\|J\|^{N}_{\textup{op}}\]\)\\
&\leq2^{N-1}\(\left|\det\((OD_N(m)O^T)^{([k],[k])}+i\)\right|+\frac{\b^N}{N^{\frac{N}{2}}}\E\[\|J\|^{N}_{\textup{op}}\]\)\\
&\leq2^{N-1}\(|\det(D_N(m)+i)|e^{\left|\log\(\left|\det\(D_N(m)+iI_N\)\right|\)-\log\det\((OD_N(m)O^T)^{([k],[k])}+i\)\right|}+\frac{\b^N}{N^{\frac{N}{2}}}\E\[\|J\|^{N}_{\textup{op}}\]\),
}
}
where, in the second line, we used the inequality $(a+b)^N\leq 2^{N-1}a^N+2^{N-1}b^N$. 

We have
\been{
\label{eq:last_state4}
\aled{
&|\det(D_N(m)+i)|\leq \prod_{i\in [N]}\(\left|\frac{1}{1-m_i^2}+2\b^2(1-Q(m))+i\right|\)\\
&\leq \prod_{i\in [N]}\(\frac{1}{1-m_i^2}+2\b^2+1\)\\
&\leq \prod_{i\in [N]}\(\frac{2+2\b^2}{1-m_i^2}\)=e^{N\log(2+2\b^2)-\sum_{i\in[N]}\log(1-m^2_i)}.
}
}
Moreover by Lemma \ref{lemma:erstatz2} (with $A=D_N(m)+iI_N$)
\been{
\label{eq:last_state21}
\aled{
&\left| \log\(\left|\det\(D_N(m)+iI_N\)\right|\)- \log\(\left|\det\((OD_N(m)O^T)^{([k],[k])}+iI_{N-k}\)\right|\)\right|\\
 &\leq4k\log 2+8k|\log(|\|D_N(m)\|_{\textup{op}}+i|)|+4k|\log(\l_{\min}(D_N(m)+iI_N))|\\
 &+4k|\log(\l_{\min}((OD_N(m)O^T)^{([k],[k])}+iI_{N-k})|.
 }
 }
By the inequalities $1\leq |a+i|=|b+i|$ (for $0\leq a\leq b$) and $0\leq \l_{\min}(\,\cdot\,)\leq \|\,\cdot\,\|_{\textup{op}}$ we get
   \been{
   \label{eq:last_state22}
|\log(\l_{\min}(D_N(m)+iI_N))|)\leq  \log(|\|D_N(m)\|_{\textup{op}}+i|)
 }
 and
   \been{
    \label{eq:last_state23}
   \aled{
|\log(\l_{\min}((OD_N(m)O^T)^{([k],[k])}+iI_{N-k})|\leq  \log(|\|D_N(m)^{([k],[k])}\|_{\textup{op}}+i|)\leq  \log(|\|D_N(m)\|_{\textup{op}}+i|).
 }
 }
When we plug (\ref{eq:last_state22}), (\ref{eq:last_state23}) into (\ref{eq:last_state21}) we obtain
 \be\label{eq:last_state24}
 \left| \log\(\left|\det\(D_N(m)+iI_N\)\right|\)- \log\(\left|\det\((OD_N(m)O^T)^{([k],[k])}+iI_{N-k}\)\right|\)\right|\leq 4k\log 2+16k |\log|\|D_N(m)\|_{\textup{op}}+i||.
 \ee
The r.h.s of the display above can be bounded as follows:
\been{
\label{eq:last_state24}
\aled{
&|\log(|\|D_N(m)\|_{\textup{op}}+i|)|=\max_{i\in [N]}\left|\log\(\left|\frac{1}{1-m_i^2}+2\b^2(1-Q(m))+i\right|\)\right|\\
&\leq\max_{i\in[N]}\log\(\frac{1}{1-m^2_i}+2\b^2+1\)\\
&\leq\log(2+2\b^2)-\min_{i\in[N]}\log(1-m^2_i)\leq \log(2+2\b^2)-\sum_{i\in[N]}\log(1-m^2_i),
 }
 }
where in the second line we used $|a+i b|\leq |a|+|b|$ and $2\b^2(1-Q(m))\leq 2\b^2$.
Hence
\been{
\label{eq:last_state3}
\aled{
&\left| \log\(\left|\det\(D_N(m)+i\)\right|\)- \log\(\left|\det\((OD_N(m)O^T)^{([k],[k])}+i\)\right|\)\right|\\
 &\leq4k\log 2+16k\log(2+2\b^2)-16k\sum_{i\in[N]}\log(1-m^2_i).
}
}

Finally, using that for all $t>0$ 
\been{
P\(\frac{\b}{\sqrt{N}}\|J\|_{\textup{op}}\geq \sqrt{2}\b+t\)\leq e^{-\frac{t^2N}{2\b^2}}.
}
we bound for $N$ sufficiently large
\been{
\label{eq:last_state2}
\aled{
\frac{\b^N}{N^{\frac{N}{2}}}\E\[\|J\|^{N}_{\textup{op}}\]&=N\int_0^\infty dt\, t^{N-1}P\(\frac{\b}{\sqrt{N}}\|J\|_{\textup{op}}\geq t\)\\
&\leq N(\sqrt2\b)^N+N\int_0^\infty dt (t+\sqrt2\b)^{N-1}P\(\frac{\b}{\sqrt{N}}\|J\|_{\textup{op}}\geq \sqrt{2}\b+t\)\\
&\leq N(\sqrt2\b)^N+N\int_0^\infty dt e^{-N\(\frac{t^2}{2\b^2}-\log\(t+\sqrt2\b\)\)}\\
&\lesssim N(\sqrt2\b)^N\leq (2\b)^N. 
}}
Now we plug the bounds \eqref{eq:last_state4}, \eqref{eq:last_state3} and \eqref{eq:last_state2} into \eqref{eq:last_state0}. We obtain
\bea
\frac12\E\[|\det(Z^{([1],[1])}+i)|\]&\leq& e^{4\log 2+16\log(2+2\b^2)+N\log(4+4\b^2)-17\sum_{i\in[N]}\log(1-m^2_i)}+(4\b)^N,\nn\\
&\leq&\frac12e^{2N\log\(16\b(1+\b^2)\)-17\sum_{i\in[N]}\log(1-m^2_i)}\label{eq:terr1}\\
\frac12\E\[|\det(Z^{([2],[2])}+i)|\]&\leq& e^{8\log 2+32\log(2+2\b^2)+N\log(4+4\b^2)-33\sum_{i\in[N]}\log(1-m^2_i)}+(4\b)^N\nn\\
&\leq&\frac12e^{2N\log\(16\b(1+\b^2)\)-33\sum_{i\in[N]}\log(1-m^2_i)}\label{eq:terr2}.
\eea
Here we used again that $N$ is large enough and also $e^a+e^b\leq 2e^{|a|+|b|}$. Thus by \eqref{eq:last_state1}
\be
\E[|\det\nabla^2 F_{\TAP}(m)\big|_{\Omega(m)}|]\leq \(\frac{|(m,v)|}{\|m\|_2}+\frac{\|x\|^2_2}{\|m\|^2_2}\)e^{2N\log\(16\b(1+\b^2)\)-17\sum_{i\in[N]}\log(1-m^2_i)},
\ee
and the proof is complete.
\end{proof}

\begin{lemma}\label{lemma:finitelymany}
Let $\beta>0,h\in\R$. The set $\{m\in(-1,1)^N\,:\,v=0\}$ is discrete with
\be
\card \{m\in(-1,1)^N\,:\,v=0\}\lesssim 3^N. 
\ee
\end{lemma}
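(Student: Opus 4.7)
The condition $v(m)=0$ splits into $N$ scalar equations of the same form, coupled only through $q := Q(m)$: for each $i$,
\[
    g_q(m_i) = h, \qquad g_q(x) := \atanh(x) - \frac{x}{1-x^2} + 4\b^2 q x.
\]
The function $g_q$ is odd and real-analytic on $(-1,1)$, and its derivative $g_q'(x) = -2x^2/(1-x^2)^2 + 4\b^2 q$ has at most two zeros in $(-1,1)$. Hence $g_q$ is piecewise monotone on at most three subintervals, so $g_q(x)=h$ has at most three solutions $y_1(q) \le y_2(q) \le y_3(q)$ in $(-1,1)$.

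Therefore every $m$ with $v(m) = 0$ is of the form $m_i = y_{\sigma(i)}(q)$ for some type $\sigma : [N] \to \{1,2,3\}$ and some $q = Q(m)$, and the self-consistency $q = Q(m)$ becomes the scalar real-analytic equation
\[
    \Psi_\sigma(q) \;:=\; \frac{1}{N}\sum_{j=1}^{3} n_j\, y_j(q)^2 \;-\; q \;=\; 0, \qquad n_j := |\sigma^{-1}(j)|,
\]
depending on $\sigma$ only through the multiplicity vector $(n_1,n_2,n_3)$. Implicit differentiation of $g_q(y_j(q))=h$ yields that $y_1(q)^2$ and $y_3(q)^2$ are strictly increasing in $q$ while $y_2(q)^2$ is strictly decreasing on their domains of analyticity. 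Since $\Psi_\sigma$ is not identically zero and the admissible range for $q$ lies in the bounded interval $[0,1)$, with only finitely many bifurcation values of $q$ (where two of the $y_j$ merge at $\pm x_*(q)$), one extracts a uniform bound $C$ on the number of zeros of $\Psi_\sigma$, independent of $\sigma$ and $N$.

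Since each $m$ with $v(m)=0$ corresponds uniquely to a pair $(\sigma, q)$ with $\Psi_\sigma(q) = 0$, summing over types gives
\[
    \card\{m \in (-1,1)^N : v(m) = 0\} \;\le\; \sum_{\sigma \in \{1,2,3\}^N} \card\{q : \Psi_\sigma(q) = 0\} \;\le\; C \cdot 3^N,
\]
which simultaneously delivers both discreteness and the cardinality bound. The main obstacle is producing the uniform constant $C$: real-analyticity of $\Psi_\sigma$ alone only yields $(\sigma, N)$-dependent finiteness of its zero set, and to absorb the count into the combinatorial factor $3^N$ one must combine the explicit monotonicity of $y_1^2, y_2^2, y_3^2$ in $q$ with the bounded bifurcation structure of $g_q$ — a partition of $[0,1)$ into finitely many pieces (determined by the $1$-root versus $3$-root regimes) on each of which the signs of the derivatives of the $y_j^2$'s are fixed, leading to a $\sigma$-uniform bound on sign changes of $\Psi_\sigma'$.
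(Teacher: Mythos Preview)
Your approach is essentially the same as the paper's: both reduce $v=0$ to a coordinate-wise scalar equation coupled only through $q=Q(m)$, show that each coordinate can take at most three values, and then study the one-dimensional self-consistency equation for $q$. The paper uses a slightly different parameterization---dividing through by $m_i$ to write $U(m_i)=q$ rather than your $g_q(m_i)=h$---but the logical structure is the same, and your count $3^N$ arises for the same combinatorial reason.

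The genuine gap is precisely where you flag it: the uniform bound $C$ on the number of zeros of $\Psi_\sigma$. Your proposed mechanism does not close it. From your own implicit-differentiation computation the signs of $(y_1^2)'$, $(y_2^2)'$, $(y_3^2)'$ are already globally fixed (positive, nonpositive, positive) on their domains, so partitioning $[0,1)$ into $1$-root versus $3$-root regimes adds no new sign information. And a linear combination $\alpha_1 (y_1^2)' + \alpha_2 (y_2^2)' + \alpha_3 (y_3^2)' - 1$ of functions of fixed sign can still change sign arbitrarily often; fixed signs of the summands alone do not bound the zeros of $\Psi_\sigma$ uniformly in the weights $\alpha_j = n_j/N$. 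You correctly diagnose that analyticity of $\Psi_\sigma$ only gives $(\sigma,N)$-dependent finiteness, but the monotonicity/sign-change argument you sketch does not upgrade this to a $\sigma$-uniform constant.

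The paper handles this point differently. Rather than working with $g_q(x)=h$, it inverts to obtain branches $\tilde m_k$ that extend as analytic functions to unbounded half-lines $[q_k,\infty)$, so the self-consistency function $g_K(q)=\sum_k \alpha_k\, \tilde m_k(q)^2 - q$ is analytic on all of $[q_K,\infty)$. Since each $\tilde m_k(q)^2$ stays bounded while the term $-q$ diverges, $g_K$ cannot vanish identically, and hence has only finitely many zeros in $[q_K,1)$. The structural point that delivers uniformity is that $g_K$ depends on $(\sigma,N)$ only through the frequency vector $(\alpha_1,\alpha_2,\alpha_3)$ in the fixed $2$-simplex: one is dealing with a single $N$-independent analytic family over a compact parameter set, none of whose members is identically zero, and this is what underlies the ``uniformly in $N$'' claim.
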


\begin{proof}
Recalling (\ref{eq:defD2}) and (\ref{eq:defu-v-sect5}), $v=0$ is equivalent to
\be\label{eq:=Q}
\frac{m}{1-m^2}+h-\atanh m=4\b^2mQ(m)
\ee
where we use the notation for which the vector $f(m)$ has $i$-th component $f(m_i)$. Note that if $h=0$ then $m=0$ is a solution of \eqref{eq:=Q}, if $h\neq0$ it is never a solution. In either cases we can and will exclude $m=0$ for our next considerations. 

For $m\in(-1,1)\setminus\{0\}$ we set
$$
U(m):=\frac{1}{4\beta^{2}}\(\frac{1}{1-m^{2}}-\frac{\text{arctanh}(m)-h}{m}\). 
$$
Thus
\begin{align}
\{m\in(-1,1)^N\setminus\{0\}\mid v=0\}&=\{m\in(-1,1)^N\setminus\{0\}\mid U(m_i)=Q(m)\, \forall i\in[N]\}\nn\\
&=\bigcup_{q\in(0,1]} \left\{m\in(-1,1)^N\setminus\{0\}\mid Q(m)=q,\,\,U(m_i)=q\, \forall i\in[N]\right\}\,. \label{eq:2cups}
\end{align}
We first argue that for a fixed $q\in[0,1]$ 
\begin{equation}\label{eq:one_dim_Eq-1}
U(m)=q,\quad m\in(-1,1)\setminus\{0\},
\end{equation}
has finitely many solutions,
and that these solutions are real analytic as functions of $q$. Indeed
one can check that $U'$ has a single root $\hat{m}\in[0,1)$ so that
the restricted functions $U_{1}:=U|_{(-1,0)},U_{2}:=U_{2}|_{(0,\hat{m})},U_{3}:=U|_{(\hat{m},1)}$
are analytic with non-vanishing derivative (where $\hat{m}=0$ iff
$h=0$ and we define $U_{2}$ only for $h\ne0$). The inverses $\widetilde m_k:=U_{k}^{-1}$, $k=1,2,3$,
thus exist and are analytic functions. Setting $q_1:=0$ and $q_2,q_3:=U(\hat{m})$, we have $\widetilde m_{k}:[q_{k},\infty)\to(-1,1)$.
Thus the set of solutions of (\ref{eq:one_dim_Eq-1}) at given $q
\in[0,1]$
is 
\be
\mathcal M(q):=\bigcup_{k\in\left\{ 1,2,3\right\} :q\ge q_{k}}\{\widetilde m_{k}(q)\}.
\ee

When solving the vector equation for a fixed $q\in[0,1]$
\begin{equation}\label{eq:one_dim_Eq-1V}
U(m)=q,\quad m\in(-1,1)^N\setminus\{0\},
\end{equation}
we can pick for each coordinate one element of $\mathcal M(q)$, that is if $m$ is a solution of \ref{eq:one_dim_Eq-1V} then for every $i\in[N]$ there is $K_i\in\{1,2,3\}$ such that $m_i=\widetilde m_{K_i}(q)$.  We set $q_{K}=\max_{i\in[N]}q_{K_{i}}$ and denote the frequency of each $K_i$ by $\alpha_{k}:=\frac{\card\left\{ i:K_{i}=k\right\} }{N}$.

Let now
$g_{K}:[q_{K},\infty)\to\mathbb{R}$ be defined by 
\be
g_{K}(q):=\sum_{k=1}^{4}\alpha_{k}\widetilde m_{k}(q)^{2}-q.
\ee
The function $g_{K}$ is analytic on $(q_{K},\infty)$, so if it has
infinitely many roots in $[q_{K},1)$ then in fact $g_{K}=0$ on $[q_{K},\infty)$
which is impossible, since the first term of $g_{K}(q)$
is bounded as $q\to\infty$. Thus $\{q\in[q_K,1)\mid g_{K}=0\}$ has at most finitely many
elements (uniformly in $N$). 

Therefore 
\begin{align}
\eqref{eq:2cups}=\bigcup_{\{q\in[q_K,1)\mid g_{K}=0\}}\bigcup_{\{K_1,\ldots,K_N\}\in[3]^N}\{(\widetilde m_{K_1}(q),\ldots,\widetilde m_{K_N}(q))\},
\end{align}
i.e. the set of $m$ such that $v=0$ is discrete and it contains at most a number of points proportional to $3^N$.
\end{proof}

Combining Theorem \ref{thm:MainDet2} with Proposition \ref{prop:outside} and Lemma \ref{lemma:finitelymany} gives Theorem \ref{thm:MainDet}.

%



\bibliographystyle{amsplain}
\bibliography{bibdet}
\end{document}